\numberwithin{equation}{section}
\newtheorem{thm}{Theorem}[section]
\newtheorem{cor}[thm]{Corollary}
\newtheorem{lem}[thm]{Lemma}
\newtheorem{prop}[thm]{Proposition}
\newtheorem{defn}[thm]{Definition}
\newtheorem{rmk}[thm]{Remark}
\newcommand{\abs}[1]{\left\vert#1\right\vert}
\newcommand{\set}[1]{\left\{#1\right\}}
\newcommand{\To}{\longrightarrow}
\newcommand{\pt}{\partial}
\begin{document}

\title[Dirac-harmonic maps with the trivial index ]{Dirac-harmonic maps with the trivial index}
\makeatletter
\def\author@andify{%
  \nxandlist {\unskip ,\penalty-1 \space\ignorespaces}%
    {\unskip {} \@@and~}%
    {\unskip \penalty-2 \space \@@and~}%
}
\makeatother
\author[J\"urgen Jost]{J\"urgen Jost}
\address{Max Planck Institute for Mathematics in the Sciences, Inselstrasse 22, 04103 Leipzig, Germany}
\email{jost@mis.mpg.de}

\author[Linlin Sun]{Linlin Sun}
\address{School of Mathematics and Statistics, Wuhan University, Wuhan 430072, China}
\email{sunll@whu.edu.cn}

\author[Jingyong Zhu]{Jingyong Zhu}
\address{College of Mathematics, Sichuan University, Chengdu 610065, China}
\email{jzhu@scu.edu.cn}
\thanks{The third author wants to thank the support by the National Natural Science Foundation of China (Grant No.  12201440) and the Fundamental Research Funds for the Central Universities.}

\subjclass[2010]{53C43; 58E20}
\keywords{Dirac-harmonic map; $\alpha$-Dirac-harmonic map flow; minimal kernel; existence; K\"ahler manifolds.}

% ----------------------------------------------------------------

\begin{abstract}
For a homotopy class $[u]$ of maps between a closed Riemannian manifold $M$ and a general manifold $N$, we want to find a  Dirac-harmonic map with the map component in the given homotopy class. Most known results require the index to be nontrivial. When the index is trivial, the few known results are all constructive and produce uncoupled solutions. In this paper, we define a new quantity. As a byproduct of proving the homotopy invariance of this new quantity, we find  a new simple proof for the fact that all  Dirac-harmonic spheres in surfaces are uncoupled. More importantly, by using the homotopy invariance of this new quantity, we prove the existence of Dirac-harmonic maps from manifolds in the trivial index case. In particular, when the domain is a closed Riemann surface, we prove the short-time existence of the $\alpha$-Dirac-harmonic map flow in the trivial index case. Together with  the density of the minimal kernel, we get an existence result for Dirac-harmonic maps from closed Riemann surfaces to K\"ahler manifolds, which extends the previous result of the first and third authors. This establishes a general existence theory for Dirac-harmonic maps in the context of trivial index.
\end{abstract}
\maketitle

% ----------------------------------------------------------------
\vspace{2em}
\section{Introduction}

Motivated by the supersymmetric nonlinear sigma model from quantum field theory, see \cite{jost2009geometry}, Dirac-harmonic maps from Riemann surfaces (with a fixed spin structure) into Riemannian manifolds were introduced in \cite{chen2006dirac}. They generalize harmonic maps and harmonic spinors. From the variational point of view, they are critical points of a conformally invariant action functional. The Euler-Lagrange equation then is an elliptic system coupling a harmonic map type equation with a Dirac type equation.

The existence of Dirac-harmonic maps from closed spin manifolds is a very difficult problem. So far, there are only a few results in this direction. Most solutions found so far are uncoupled in the sense that the map part is harmonic. The  existence result of \cite{ammann2013dirac} for uncoupled solutions depends  on the  index $I(M,u)$ being non-zero (see Definition \ref{index alpha}). But when the domain and target are both closed Riemann surfaces, the index $I(M,u)$ always vanishes.  In this case, an existence result about uncoupled Dirac-harmonic maps was proved in \cite{chen2015dirac} by the Riemann-Roch formula. Later, this result was generalized to K\"ahler manifolds in  \cite{sun2018note}.
 A general existence result for Dirac-harmonic maps from closed Riemann surfaces to compact manifolds was first established in  \cite{jost2019alpha}. This implies the existence of Dirac-harmonic maps when the index $I(M,u)$ is nontrivial.

This then naturally raises the question of  the existence of Dirac-harmonic maps in a given homotopy class $[u]$ between manifolds $M,N$ with trivial index $I(M,u)$. Of course, we should first identify conditions under which this index vanishes. When the domain $M$ is a  closed Riemann surface with  positive genus and the target $N$ is an odd-dimensional oriented manifold, there is always a spin structure on $M$ such that the index $I(M,u)$ is nontrivial. When, in contrast, $M$ is a closed Riemann surface and $N$ is an even-dimensional spin manifold, the index $
I(M,u)$ is always zero. Therefore, here we consider the case where the target manifold $N$ is a  K\"ahler spin manifold. In this case, it is necessary to use a new quantity that can  replace the index $I(M,u)$. For this purpose, we introduce a candidate that  uses the complex structure of the target manifold. More precisely, we first decompose the twisted Dirac operator as
 \begin{equation*}
 \slashed{D}^u=\slashed{D}_{1,0}^u+\slashed{D}_{0,1}^u
 \end{equation*}
according to the decomposition $(u^*TN)^{\mathbb C}=u^*T_{1,0}N\oplus u^*T_{1,0}N$. Then we just consider the kernel of one of the two operators, such as $\slashed{D}_{1,0}$. We define
\begin{equation*}
    \mathcal{I}(M, u^*T_{1,0}N):=\left[\frac12{\rm dim}_{\mathbb{C}}{\rm ker}\slashed{D}_{1,0}^{u}\right]_{\mathbb{Z}_2},
\end{equation*}
for an even dimensional spin manifold $M$ whenever the complex dimension of the kernel of $\slashed{D}_{1,0}^{u}$ is even.

In order to be useful for our purposes, this should be  homotopy invariant. Let us first look at an example. Suppose $M=\mathbb{C}{\rm P}^1$ and $N$ is a compact surface. Consider any map $u: M\to N$, the spinor bundle $\Sigma\mathbb{C}{\rm P}^1$ and the twisted bundle $\Sigma\mathbb{C}{\rm P}^1\otimes u^*T_{1,0}N$. Let $g_N$ be the genus of $N$ and $c_1\left(u^*T_{1,0}N\right)=a\gamma, a=2\deg(u)(1-g_N)$, where $\gamma$ is the tautological bundle of $\mathbb{C}{\rm P}^1$. The unique spin structure of $\mathbb{C}{\rm P}^1$ is determined by $\gamma$ since  $\Lambda^{1,0}\mathbb{C}{\rm P}^1=\gamma^2$. Then as a holomorphic bundle, we have
\begin{align*}
   \Sigma\mathbb{C}{\rm P}^1\otimes u^*T_{1,0}N=\left(\gamma\oplus\Lambda^{0,1}\mathbb{C}{\rm P}^1\otimes\gamma\right)\otimes\gamma^{a}=\gamma^{a+1}\oplus\Lambda^{0,1}\mathbb{C}{\rm P}^1\otimes\gamma^{a+1}.
\end{align*}
Since
\begin{align*}
    \dim_{\mathbb{C}}H^0\left(\mathbb{C}{\rm P}^1,\gamma^{m}\right)=\begin{cases}
    0,&m>0,\\
    1-m,&m\leq0,
    \end{cases}
\end{align*}
we conclude that
\begin{align*}
    \dim_{\mathbb{C}}\ker\slashed{D}^{u}_{1,0}=&\dim_{\mathbb{C}}H^0\left(\mathbb{C}{\rm P}^1,\gamma^{a+1}\right)+\dim_{\mathbb{C}}H^1\left(\mathbb{C}{\rm P}^1,\gamma^{a+1}\right)\\
    =&\dim_{\mathbb{C}}H^0\left(\mathbb{C}{\rm P}^1,\gamma^{a+1}\right)+\dim_{\mathbb{C}}H^0\left(\mathbb{C}{\rm P}^1,\gamma^{1-a}\right)\\
    =&\abs{a}=2|{\rm deg}(u)(g_N-1)|.
\end{align*}
Therefore, $\dim_{\mathbb{C}}\ker\slashed{D}^{u}_{1,0}$ is invariant in the homotopy class $[u]$. This implies the homotopy invariance of $\mathcal{I}(\mathbb{C}{\rm P}^1, u^*T_{1,0}N)$, which is equal to $\left[|{\rm deg}(u)(g_N-1)|\right]_{\mathbb{Z}_2}$. Moreover, the dimension of the kernel of the Dirac operator is a constant in a given homotopy class.  
Then the following well-known fact follows from the
first variational formula.

\begin{prop}[\cite{yang2009structure}]
There is no coupled Dirac-harmonic map from the $2$-sphere into a compact Riemann surface.
\end{prop}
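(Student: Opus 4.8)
The plan is to derive the statement from the first variational formula for the Dirac-harmonic action
\begin{equation*}
L(\phi,\psi)=\int_{M}\left(\abs{d\phi}^{2}+\left\langle\psi,\slashed{D}^{\phi}\psi\right\rangle\right)
\end{equation*}
together with the homotopy invariance of the kernel dimension established just above. Recall that a Dirac-harmonic map $(\phi,\psi)$ is a critical point of $L$, so its components satisfy both the map equation $\tau(\phi)=\frac12\mathcal{R}(\phi,\psi)$, with $\mathcal{R}(\phi,\psi)$ the curvature term quadratic in $\psi$, and the Dirac equation $\slashed{D}^{\phi}\psi=0$; it is called \emph{coupled} exactly when $\tau(\phi)\not\equiv0$, which in particular forces $\psi\not\equiv0$. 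Thus it suffices to show that for every Dirac-harmonic map $(\phi,\psi)$ from $M=S^{2}=\mathbb{C}\mathrm{P}^{1}$ into a compact Riemann surface $N$ the map $\phi$ is harmonic.

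First I would fix such a pair and an arbitrary smooth variation $\phi_{s}$ with $\phi_{0}=\phi$. Since every $\phi_{s}$ lies in the homotopy class $[\phi]$, the computation preceding the proposition gives $\dim_{\mathbb{C}}\ker\slashed{D}^{\phi_{s}}_{1,0}=2\abs{\deg(\phi)(g_{N}-1)}$ independently of $s$; the same argument applied to $\slashed{D}^{\phi_{s}}_{0,1}$, whose twisting bundle $\phi_{s}^{*}T_{0,1}N$ is the complex conjugate of $\phi_{s}^{*}T_{1,0}N$, shows that, after complexification, $\dim\ker\slashed{D}^{\phi_{s}}=2\dim_{\mathbb{C}}\ker\slashed{D}^{\phi_{s}}_{1,0}$ is also independent of $s$. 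As $\{\slashed{D}^{\phi_{s}}\}$ is a smooth family of elliptic operators with kernels of constant dimension, $\bigcup_{s}\ker\slashed{D}^{\phi_{s}}$ forms a smooth vector bundle over the parameter interval, so I can pick a smooth family of spinors $\psi_{s}$ along $\phi_{s}$ with $\psi_{0}=\psi$ and $\slashed{D}^{\phi_{s}}\psi_{s}=0$ for all $s$ (for instance by parallel transport inside this bundle of kernels).

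For such a family the Dirac term of $L$ vanishes identically, so $L(\phi_{s},\psi_{s})=\int_{M}\abs{d\phi_{s}}^{2}$. On the other hand $(\phi_{0},\psi_{0})$ is a critical point of $L$: in the first variational formula the term pairing with $\partial_{s}\phi|_{s=0}$ carries the factor $\tau(\phi)-\frac12\mathcal{R}(\phi,\psi)$ and the term pairing with $\partial_{s}\psi|_{s=0}$ the factor $\slashed{D}^{\phi}\psi$, both of which vanish by the Dirac-harmonic map equations, whence $\frac{d}{ds}\big|_{s=0}L(\phi_{s},\psi_{s})=0$. Combining the two, $\frac{d}{ds}\big|_{s=0}\int_{M}\abs{d\phi_{s}}^{2}=0$ for \emph{every} variation $\phi_{s}$, so $\phi$ is a critical point of the Dirichlet energy, i.e. $\tau(\phi)=0$. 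Hence the Dirac-harmonic map is uncoupled, which is the assertion.

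The only step that requires genuine care is the construction of the smooth family $\psi_{s}$, and this is exactly where the hypotheses are used: the assumption that the domain is $S^{2}$ and the target a Riemann surface forces, via the Riemann-Roch computation above, the dimension of $\ker\slashed{D}^{\phi_{s}}$ to be constant along the whole homotopy, so that the spinor can be continued; for a domain of higher genus this dimension may jump and $\psi$ need not extend, which is precisely why the statement is special to spheres. Granting the constancy of the kernel dimension, the smooth dependence of the kernels of the elliptic family $\slashed{D}^{\phi_{s}}$ on $s$ is standard, and what remains is the routine first-variation computation, so I expect no further difficulty.
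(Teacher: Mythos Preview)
Your argument is correct and follows exactly the route the paper indicates: the Riemann--Roch computation preceding the proposition shows that $\dim\ker\slashed{D}^{\phi_s}$ is constant along any variation, which allows you to extend $\psi$ to a smooth family of harmonic spinors $\psi_s$, and then the first variational formula forces $\phi$ to be a critical point of the Dirichlet energy. The paper itself only says ``follows from the first variational formula'' after noting the kernel dimension is constant in the homotopy class, so you have faithfully expanded precisely that sketch.
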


In general, we can give two different sufficient conditions to guarantee the homotopy invariance of $\mathcal{I}$.

\begin{thm}\label{inv}
Suppose that $M$ is an even dimensional spin Riemannian  manifold and $(N,i)$ is a K\"ahler manifold. If one of the following holds:
\begin{itemize}
\item[(1)]the complex spinor bundle $(\Sigma M, i_1)$ over $M$ admits a commuting real structure $j$, i.e. a  real structure 
($
j^2=id_{\Sigma M}, \ ji_1=-i_1j$)
commutes with
Clifford multiplication
 and $N$ is hyperK\"ahler;
\item[(2)] the complex spinor bundle $\Sigma M$ over $M$ admits a commuting quaternionic structure $j_1$ and there exists a parallel real structure $j_2$ on $T_{1,0}N$, i.e.
\begin{equation*}
j_2^2=id_{T_{1,0}N}, \ j_2i=-ij_2 \ \text{and} \ \nabla{j_2}=0.
\end{equation*}
\end{itemize}
Then all the eigenspaces of $\slashed{D}^u_{1,0}$ are quaternionic vector spaces for any map $u:M\to N$ and $\mathcal{I}(M, u^*T_{1,0}N)$ is invariant in the homotopy class $[u]$.

Moreover, if $\mathcal{I}(M, u^*T_{1,0}N)\neq0$, then there is a real vector space of  real dimension $\ge 4$ such that all $(\tilde{u},\psi)$ are uncoupled $\alpha$-Dirac-harmonic maps as long as there is an $\alpha$-harmonic map $\tilde{u}\in[u]$ for $\alpha\geq1$.
\end{thm}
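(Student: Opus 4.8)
\emph{Plan of proof.} I would establish the three assertions in turn: existence of a bundle structure making the eigenspaces of $\slashed D^u_{1,0}$ quaternionic (so that $\mathcal I$ is defined and even-dimensional), homotopy invariance of $\mathcal I$, and the statement on uncoupled $\alpha$-Dirac-harmonic maps. Throughout, since $N$ is K\"ahler the Levi--Civita connection preserves the splitting $(u^*TN)^{\mathbb C}=u^*T_{1,0}N\oplus u^*T_{0,1}N$ and restricts on $u^*T_{1,0}N$ to the Chern connection; hence $\slashed D^u_{1,0}$ is a formally self-adjoint, first-order elliptic operator on $\Gamma(\Sigma M\otimes u^*T_{1,0}N)$, and it anticommutes with the chirality involution $\Gamma=\gamma\otimes\mathrm{id}$ coming from the $\mathbb Z_2$-grading of $\Sigma M$.

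\emph{Step 1 (quaternionic eigenspaces).} In case (1) the hyperK\"ahler structure of $N$ supplies a \emph{parallel} quaternionic structure $J_N$ on $u^*T_{1,0}N$ --- compose a second compatible complex structure with complex conjugation, so $J_N^2=-\mathrm{id}$, $J_Ni=-iJ_N$, $\nabla J_N=0$; put $J:=j\otimes J_N$. In case (2) put $J:=j_1\otimes j_2$. In both cases $J$ is complex antilinear, parallel (each factor is), and commutes with Clifford multiplication $e_k\cdot$ (the $\Sigma M$-factor $j$, resp.\ $j_1$, does, while the other factor acts on $u^*T_{1,0}N$), while $J^2=j^2\otimes J_N^2=-\mathrm{id}$ in case (1) and $J^2=j_1^2\otimes j_2^2=-\mathrm{id}$ in case (2). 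Hence $J$ commutes with $\slashed D^u_{1,0}=\sum_k e_k\cdot\nabla_{e_k}$, and every eigenspace of $\slashed D^u_{1,0}$ is a $J$-invariant complex subspace on which $J^2=-\mathrm{id}$, i.e.\ a quaternionic vector space; in particular $\dim_{\mathbb C}\ker\slashed D^u_{1,0}$ is even and $\mathcal I(M,u^*T_{1,0}N)$ is well defined.

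\emph{Step 2 (homotopy invariance).} For a homotopy $u_t$, $t\in[0,1]$, the family $\slashed D^{u_t}_{1,0}$ is a continuous path of self-adjoint elliptic operators, each commuting with a continuous family of quaternionic structures $J_t$ and each anticommuting with the fixed involution $\Gamma$. Two routes seem available. \emph{(i) Spectral bookkeeping:} because $\Gamma$ anticommutes with $\slashed D^{u_t}_{1,0}$ the spectrum is symmetric about $0$, so the eigenvalue branches reaching $0$ occur in pairs $\{\lambda(t),-\lambda(t)\}$ of equal multiplicity, and because $J_t$ commutes with $\slashed D^{u_t}_{1,0}$ each such branch has even complex multiplicity; hence $\dim_{\mathbb C}\ker\slashed D^{u_t}_{1,0}$ changes only by multiples of $4$ along the path, so $\tfrac12\dim_{\mathbb C}\ker\slashed D^{u_t}_{1,0}$ changes only by even amounts and $\mathcal I$ is locally, hence globally, constant. \emph{(ii) $KO$-theoretic index:} the chiral part $\slashed D^{u,+}_{1,0}$, together with the action of $J$ and its commutation relation with $\Gamma$, is a $\mathrm{Cl}_k$-linear Fredholm operator (the relevant $k\bmod 8$ dictated by the real/quaternionic type of $j,j_1,j_2$ and by $\dim_{\mathbb R}M/2\bmod 4$); one identifies $\mathcal I(M,u^*T_{1,0}N)$ with its $KO^{-k}(\mathrm{pt})$-valued index via the Atiyah--Bott--Shapiro description of $\ker\slashed D^{u,+}_{1,0}$, and this index depends only on the stable class of $u^*T_{1,0}N$, hence only on $[u]$, by the $KO$-valued index theorem. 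I expect the technical heart, and the main obstacle, to be the rigorous implementation of one of these routes --- making the even-multiplicity count precise for a merely continuous family, or pinning down the Clifford-module structure in each parity; the $\mathbb{C}{\rm P}^1$ computation in the Introduction is the guiding model.

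\emph{Step 3 (uncoupled solutions).} Suppose $\mathcal I(M,u^*T_{1,0}N)\neq0$ and let $\tilde u\in[u]$ be $\alpha$-harmonic, $\alpha\ge1$. By Steps 1--2, $\dim_{\mathbb C}\ker\slashed D^{\tilde u}_{1,0}$ is even and $\tfrac12\dim_{\mathbb C}\ker\slashed D^{\tilde u}_{1,0}$ is a positive odd integer, so $V:=\ker\slashed D^{\tilde u}_{1,0}\subset\Gamma(\Sigma M\otimes\tilde u^*T_{1,0}N)$ satisfies $\dim_{\mathbb C}V\ge2$, i.e.\ $\dim_{\mathbb R}V=2\dim_{\mathbb C}V\ge4$. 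Every $\psi\in V$ is a harmonic spinor, $\slashed D^{\tilde u}\psi=\slashed D^{\tilde u}_{1,0}\psi=0$. Moreover, since $N$ is K\"ahler the curvature $R^N$ of the Chern connection on $T_{1,0}N$ is of type $(1,1)$, so $R^N(X,Y)=0$ for $X,Y\in T_{1,0}N$; as $\psi$ is valued in $\tilde u^*T_{1,0}N$, both input slots of $R^N$ in the curvature term $\mathcal R(\tilde u,\psi)$ of the map equation are fed by $\psi$ and lie in $T_{1,0}N$, so $\mathcal R(\tilde u,\psi)$ vanishes identically. Hence $(\tilde u,\psi)$ solves the full $\alpha$-Dirac-harmonic system with $\tilde u$ $\alpha$-harmonic and $\mathcal R(\tilde u,\psi)=0$, i.e.\ it is uncoupled, for every $\psi$ in the real $\ge4$-dimensional space $V$. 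This completes the plan; Step 2 is where the real work lies.
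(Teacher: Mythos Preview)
Steps 1 and 2 of your plan are essentially the paper's argument; in particular, your route (i) in Step 2 --- spectrum symmetric about $0$ (from the chirality grading, since $m$ is even) together with even complex multiplicity of each eigenvalue (from $J$) --- is exactly what the paper does, using an analytic path of maps to make the eigenvalue-branch bookkeeping rigorous.

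The genuine gap is in Step 3. Your claim that $\mathcal R(\tilde u,\psi)=0$ for every $\psi\in\ker\slashed D^{\tilde u}_{1,0}$ ``because both input slots of $R^N$ are fed by $\psi$ and lie in $T_{1,0}N$'' is incorrect. In the curvature term $\mathcal R(u,\psi)^m=\tfrac12 R^m_{lij}\langle\psi^i,\nabla u^l\cdot\psi^j\rangle_{\Sigma M}$ the Hermitian inner product on $\Sigma M$ is conjugate-linear in its first argument. If $\psi=\psi^\alpha\otimes\partial_{z^\alpha}$ is $T_{1,0}$-valued, then expanding in real coordinates (so $\psi^{y^\alpha}=-i\,\psi^{x^\alpha}$) and summing over $i,j$, the curvature component that survives is $R^m_{l,\bar\alpha\beta}$, the $(1,1)$-part --- which is precisely the \emph{nonvanishing} part of the K\"ahler curvature. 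Equivalently, in the first-variation form $\int\langle\psi,\, e_\gamma\cdot R^N(V,du(e_\gamma))\psi\rangle$ the $2$-form slots of $R^N$ receive the real vectors $V$ and $du(e_\gamma)$, not $\psi$; and the Hermitian pairing $\langle\psi,\cdot\,\psi\rangle$ contributes one $(1,0)$ and one $(0,1)$ index via $h_{\bar\alpha\beta}$. So $\mathcal R(\tilde u,\psi)$ has no reason to vanish for a general $T_{1,0}$-valued harmonic spinor.

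The paper's route for Step 3 is different and does not try to kill $\mathcal R$ by curvature type. It uses the chirality grading: for $\Psi^+\in\Gamma(\Sigma^+M\otimes\tilde u^*T_{1,0}N)$ one has $\langle\slashed D^{u}_{1,0}\Psi^+,\Psi^+\rangle\equiv0$ pointwise, since $\slashed D$ swaps $\Sigma^\pm$. Hence for any variation $u_s$ of $\tilde u$ and any chiral extension $\Psi^+_s$, the Dirac part of $L^\alpha(u_s,\Psi^+_s)$ vanishes identically in $s$. Differentiating at $s=0$ and using $\slashed D^{\tilde u}\Psi^+=0$ to kill the $\dot\Psi$-term forces $\int_M\langle\mathcal R(\tilde u,\Psi^+),V\rangle=0$ for every $V$, so $\mathcal R(\tilde u,\Psi^+)=0$; likewise for $\Psi^-$. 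This is the mechanism that produces the uncoupled $\alpha$-Dirac-harmonic maps $(\tilde u,\Psi^\pm)$, and your Step 3 should be replaced by this chirality/first-variation trick.
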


Here  $\alpha$-Dirac-harmonic maps are the critical points of the following functional
 \begin{equation*}
 L^\alpha(u,\psi)=\frac12\int_M(1+|du|^2)^\alpha+\frac12\int_M\langle\psi,\slashed{D}^u\psi\rangle_{\Sigma M\otimes u^*TN}, \ \ \forall \alpha\geq1.
 \end{equation*}
 They are generalizations of Dirac-harmonic maps (i.e. the case of $\alpha=1$). As generalizations of harmonic maps, $\alpha$-harmonic maps are the critical points of the following functional
  \begin{equation*}
 E_\alpha(u)=\frac12\int_M(1+|du|^2)^\alpha, \ \ \forall \alpha\geq1,
 \end{equation*}
which was introduced by Sacks and Uhlenbeck in \cite{sacks1981existence}.

By the statement in  \cite[Theorem 2.2.2]{hermann2012dirac}, such a commuting real structure in Theorem \ref{inv} always exists on $M$ if $m=0,6,7 ({\rm mod} \ 8)$. In particular, when $m=0,6({\rm mod} \ 8)$, we can get the existence of uncoupled Dirac-harmonic maps.

\begin{cor}\label{direct cor}
Let $m$ be the dimension of $M$. Suppose one of the following holds:
\begin{itemize}
    \item[(a)] $m=0,6({\rm mod} \ 8)$, $N$ is a hyperK\"ahler manifold and a homotopy class $[u]$ satisfies $\mathcal{I}(M, u^*T_{1,0}N)\neq0$;
    \item[(b)] $m=2,4({\rm mod} \ 8)$, $N$ is a  K\"ahler manifold with a parallel real structure $j_2$ defined in Theorem \ref{inv} and a homotopy class $[u]$ satisfies $\mathcal{I}(M, u^*T_{1,0}N)\neq0$.
\end{itemize}
Then there is a real vector space of  real dimension $\geq 4$ such that all $(\tilde{u},\psi)$ are uncoupled $\alpha$-Dirac-harmonic maps as long as there is an $\alpha$-harmonic map $\tilde{u}\in[u]$ for $\alpha\geq 1$. 

\end{cor}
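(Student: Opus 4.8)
The plan is to derive Corollary \ref{direct cor} directly from Theorem \ref{inv} by checking that the relevant structural hypotheses are available in cases (a) and (b), so that the ``moreover'' part of Theorem \ref{inv} applies verbatim. Since both cases only require producing the needed real/quaternionic structures, the corollary is essentially a bookkeeping argument on Clifford modules together with the cited fact from \cite[Theorem 2.2.2]{hermann2012dirac}.

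For case (a), I would argue as follows. When $m\equiv 0,6\ (\mathrm{mod}\ 8)$, the cited statement from \cite{hermann2012dirac} guarantees that the complex spinor bundle $(\Sigma M, i_1)$ carries a commuting real structure $j$ with $j^2=\mathrm{id}_{\Sigma M}$, $ji_1=-i_1 j$, and $j\rho=\rho j$ (note $m\equiv 0,6,7$ is the range where a commuting real structure exists, so $m\equiv 0,6$ is covered). Since $N$ is assumed hyperK\"ahler, hypothesis (1) of Theorem \ref{inv} is satisfied. Hence all eigenspaces of $\slashed D^u_{1,0}$ are quaternionic vector spaces and $\mathcal I(M,u^*T_{1,0}N)$ is homotopy invariant; in particular the hypothesis $\mathcal I(M,u^*T_{1,0}N)\neq 0$ is meaningful and stable along $[u]$. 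Then the final assertion of Theorem \ref{inv} yields a real vector space of real dimension $\ge 4$ of spinors $\psi$ such that $(\tilde u,\psi)$ is an uncoupled $\alpha$-Dirac-harmonic map, provided an $\alpha$-harmonic map $\tilde u\in[u]$ exists; this is exactly the claimed conclusion.

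For case (b), when $m\equiv 2,4\ (\mathrm{mod}\ 8)$ the representation-theoretic picture changes: in these dimensions the relevant Clifford module admits a commuting \emph{quaternionic} structure $j_1$ (rather than a real one). I would invoke the same classification of structures on the complex spinor module, now in the dimensions $m\equiv 2,4\ (\mathrm{mod}\ 8)$, to obtain the commuting quaternionic structure $j_1$ on $\Sigma M$. The second ingredient, a parallel real structure $j_2$ on $T_{1,0}N$ with $j_2^2=\mathrm{id}$, $j_2 i=-i j_2$, $\nabla j_2=0$, is now part of the hypothesis on $N$ rather than something to be constructed. Thus hypothesis (2) of Theorem \ref{inv} holds, and again the eigenspaces of $\slashed D^u_{1,0}$ are quaternionic, $\mathcal I$ is homotopy invariant, and the ``moreover'' part of Theorem \ref{inv} produces the desired $\ge 4$-real-dimensional space of uncoupled $\alpha$-Dirac-harmonic solutions over any $\alpha$-harmonic $\tilde u\in[u]$.

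The only genuine point to be careful about — and the step I expect to be the main (mild) obstacle — is the precise dimensional bookkeeping for which of the two structures (real vs. quaternionic, commuting with $i_1$) exists on the complex spinor module in each residue class mod $8$, and the interplay with the $\mathbb Z_2$-grading when $m$ is even. One must ensure that the structure supplied by \cite[Theorem 2.2.2]{hermann2012dirac} is the \emph{commuting} one (commuting with the Clifford multiplication $\rho$, not anticommuting), since Theorem \ref{inv} needs $j\rho=\rho j$; and one must verify that the half-spinor decomposition for even $m$ does not obstruct the construction of $j$ or $j_1$ on the full bundle $\Sigma M$. Once these conventions are pinned down, cases (a) and (b) fall out immediately, with no further analysis required beyond citing Theorem \ref{inv}.
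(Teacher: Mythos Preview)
Your proposal is correct and follows exactly the paper's approach: the corollary is not given a separate proof but is stated immediately after the remark that the requisite commuting real (resp.\ quaternionic) structure on $\Sigma M$ exists when $m\equiv 0,6\ (\mathrm{mod}\ 8)$ (resp.\ $m\equiv 2,4\ (\mathrm{mod}\ 8)$) by \cite[Theorem~2.2.2]{hermann2012dirac}, so that hypothesis (1) or (2) of Theorem~\ref{inv} is satisfied and its ``moreover'' clause applies. Your only caution about verifying the commuting vs.\ anticommuting convention is well placed but is precisely what the cited reference settles.
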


\begin{rmk}
Note that the case $m=6({\rm mod} \ 8)$ is not included in \cite{ammann2013dirac} due to the definition of index $\alpha(M,u)$.
When $m=0({\rm mod} \ 4)$, the triviality of the index $\alpha(M,u)$  implies that of the index ${\rm ind}(\slashed{D}^+)$, where $\slashed{D}^+$ comes from the decomposition of the Dirac operator according to that of the spinor bundle (see the Section 2). When $m=2k$ and $k$ is odd, the triviality of ${\rm ind}(\slashed{D}^+)$ does not imply that of ${\rm ind}(\slashed{D}^+_{1,0})$. And the second author used the non-trivial index ${\rm ind}(\slashed{D}^+_{1,0})$ to get an existence result in \cite{sun2018note}.
Our corollary is still valid even if ${\rm ind}(\slashed{D}^+_{1,0})=0$. For example, our result applies to the case when the dimensions of the kernels in those four subspaces in the decomposition \eqref{decomposition} are all equal to one, which is never considered in literatures.
\end{rmk}

When $M$ is a closed Riemann surface, we can prove the short-time existence of $\alpha$-Dirac-harmonic map flow into a  K\"ahler manifold, which generalizes the result in \cite{jost2019short}.

The rest of the paper is organized as follows: In Section 2, we recall some facts about Dirac-harmonic maps as well as the Dirac operator. In Section 3, we prove Theorem \ref{inv} and end this section by showing the density of minimal kernel. In Section 4, under the minimality assumption on the kernel of $\slashed{D}_{1,0}^{u_0}$, we prove the short time existence of $\alpha$-Dirac-harmonic map flow (Theorem \ref{short-time alpha flow}) and the existence of Dirac-harmonic maps (Theorem \ref{flow to DH maps}). In the Appendix,  we solve the constraint equation and prove Lipschitz continuity of the solution with respect to the map.

\vspace{2em}

\section{Preliminaries}

Let $(M, g)$ be a compact Riemann surface with a fixed spin structure $\chi$. On the complex spinor bundle $\Sigma M$, we denote the Hermitian inner product by $\langle\cdot, \cdot\rangle_{\Sigma M}$. For any $X\in\Gamma(TM)$ and $\xi\in\Gamma(\Sigma M)$, the Clifford multiplication satisfies the following skew-adjointness:
\begin{equation*}
\langle X\cdot\xi, \eta\rangle_{\Sigma M}=-\langle\xi, X\cdot\eta\rangle_{\Sigma M}.
\end{equation*}
Let $\nabla$ be the Levi-Civita connection on $(M,g)$. There is a unique  connection (also denoted by $\nabla$) on $\Sigma M$ compatible with $\langle\cdot, \cdot\rangle_{\Sigma M}$.  Choosing a local orthonormal basis $\{e_{\beta}\}_{\beta=1,2}$ on $M$, the usual Dirac operator is defined as $\slashed\partial:=e_\beta\cdot\nabla_\beta$, where $\beta=1,2$. Here and in the sequel, we use the Einstein summation convention. One can find more about spin geometry in \cite{lawson1989spin}.

Let $u$ be a smooth map from $M$ to another compact Riemannian manifold $(N, h)$ of dimension $n\geq2$. Let $u^*TN$ be the pull-back bundle of $TN$ by $u$ and consider the twisted bundle $\Sigma M\otimes_{\mathbb R} u^*TN$. On this bundle there is a metric $\langle\cdot,\cdot\rangle_{\Sigma M\otimes u^*TN}$ induced from the metric on $\Sigma M$ and $u^*TN$. Also, we have a connection $\tilde\nabla$ on this twisted bundle naturally induced from those on $\Sigma M$ and $u^*TN$. In local coordinates $\{y^i\}_{i=1,\dots,n}$, the section $\psi$ of $\Sigma M\otimes_{\mathbb R} u^*TN$ is written as
$$\psi=\psi_i\otimes\partial_{y^i}(u),$$
where each $\psi^i$ is a usual spinor on $M$. We also have the following local expression of $\tilde\nabla$
$$\tilde\nabla\psi=\left(\nabla\psi^i+\Gamma_{jk}^i(u)\nabla u^j\psi^k\right)\otimes\partial_{y^i}(u),$$
where $\Gamma^i_{jk}$ are the Christoffel symbols of the Levi-Civita connection of $N$. The Dirac operator along the map $u$ is defined as
\begin{equation}\label{dirac operator}
\slashed{D}:=e_\alpha\cdot\tilde\nabla_{e_\alpha}\psi=\left(\slashed\partial\psi^i+\Gamma_{jk}^i(u)\nabla_{e_\alpha}u^j(e_\alpha\cdot\psi^k)\right)\otimes\partial_{y^i}(u),
\end{equation}
which is self-adjoint \cite{jost2017riemannian}. Sometimes, we use $\slashed{D}^u$ to distinguish the Dirac operators defined on different maps. In \cite{chen2006dirac}, the authors  introduced the  functional
\begin{equation*}\begin{split}
L(u,\psi)&:=\frac12\int_M\left(|du|^2+\langle\psi,\slashed{D}\psi\rangle_{\Sigma M\otimes u^*TN}\right)\\
&=\frac12\int_M\left( h_{ij}(u)g^{\alpha\beta}\frac{\partial u^i}{\partial x^\alpha}\frac{\pt u^j}{\pt x^\beta}+h_{ij}(u)\langle\psi^i,\slashed{D}\psi^j\rangle_{\Sigma M}\right).
\end{split}
\end{equation*}
They computed the Euler-Lagrange equations of $L$:
\begin{numcases}{}
   \tau^m(u)-\frac12R^m_{lij}\langle\psi^i,\nabla u^l\cdot\psi^j\rangle_{\Sigma M}=0,  \label{eldh1}\\
   \slashed{D}\psi^i:=\slashed\pt\psi^i+\Gamma_{jk}^i(u)\nabla_{e_\alpha}u^j(e_\alpha\cdot\psi^k)=0, \label{eldh2}
\end{numcases}
where $\tau^m(u)$ is the $m$-th component of the tension field \cite{jost2017riemannian} of the map $u$ with respect to the coordinates on $N$, $\nabla u^l\cdot\psi^j$ denotes the Clifford multiplication of the vector field $\nabla u^l$ with the spinor $\psi^j$, and $R^m_{lij}$ stands for the components of the Riemann curvature tensor of the target manifold $N$. Denote
$$\mathcal{R}(u,\psi):=\frac12R^m_{lij}\langle\psi^i,\nabla u^l\cdot\psi^j\rangle_{\Sigma M}\pt_{y^m}.$$ We can write \eqref{eldh1} and \eqref{eldh2} in the following global form:
\begin{numcases}{}
\tau(u)=\mathcal{R}(u,\psi), \label{geldh1} \\
\slashed{D}\psi=0,  \label{geldh2}
\end{numcases}
and call the solutions $(u,\psi)$ Dirac-harmonic maps from $M$ to $N$.

With the aim to get a general existence scheme for  Dirac-harmonic maps, the following heat flow for Dirac-harmonic maps was introduced in \cite{chen2017estimates}:
\begin{empheq}[left=\empheqlbrace]{alignat=2}
	&\partial_tu=\tau(u)-\mathcal{R}(u,\psi),\ &\text{on} \   (0,T)\times M, \label{map} \\
		&\slashed{D}\psi=0,\ & \text{on} \   [0,T]\times M.	\label{dirac}
\end{empheq}
When $M$ has boundary, the short time existence and uniqueness of \eqref{map}-\eqref{dirac} was  shown in \cite{chen2017estimates}.

 For a closed manifold $M$, the situation is  more complicated because one cannot uniquely solve the second equation \eqref{dirac} and the kernel of the Dirac operator may jump along the flow. As we stated in the introduction, the short-time existence is only known in the minimal kernel case, i.e. ${\rm dim}_{\mathbb H}{\rm ker}\slashed{D}=1$. However, when the target manifold $N$ is an even-dimensional spin manifold, the index $\alpha(M,u)$ always vanishes for any map $u$ between $M$ and $N$. In order to deal with this case, we utilize the complex structure on $N$. We denote the complexification of $u^*TN$ by $(u^*TN)^{\mathbb C}$. Then we have
 \begin{equation*}
 \Sigma M\otimes_{\mathbb R} u^*TN=(\Sigma M\otimes_{\mathbb C}{\mathbb C})\otimes_{\mathbb R} u^*TN=\Sigma M\otimes_{\mathbb C}(u^*TN)^{\mathbb C}.
\end{equation*}
 The pull-back metric $u^*g$ on $u^*TN$ could be naturally extended to a Hermitian
product on $(u^*TN)^{\mathbb C}$. Moreover, there is a natural Hermitian product on $\Sigma M\otimes(u^*TN)^{\mathbb C}$ induced from those on $\Sigma M$ and $(u^*TN)^{\mathbb C}$, which is denoted by $\langle\cdot,\cdot\rangle_{\Sigma M\otimes(u^*TN)^{\mathbb C}}$.
\iffalse
Generally, \eqref{vector bundle identity} holds for any real vector bundle $E$ over $M$, i.e.
 \begin{equation*}
 \Sigma M\otimes_{\mathbb R}E=\Sigma M\otimes_{\mathbb C} E^{\mathbb C},
\end{equation*}
 where $E^\mathbb{C}$ is the complexification of $E$.
\fi

For a general even-dimensional spin Riemannian manifold $M$, there is a parallel ${\mathbb Z}_2$-grading $G\in{\rm End}(\Sigma M)$ given by $G(\psi)=\left(\sqrt{-1}\right)^{m/2}e_1\cdot e_2\cdot\dotsc\cdot e_{m}\cdot\psi$ for a positively oriented orthonormal local
frame $\{e_1,e_2,\dotsc,e_m\}$ where $m=\dim M$. Thus the spinor bundle can be decomposed as
\begin{equation*}
\Sigma M=\Sigma^+M\oplus\Sigma^-M,
\end{equation*}
where $\Sigma^\pm M$ are the eigenspaces of $G$ associated to the $\pm1$, respectively. As $G$ is Hermitian and parallel, the decomposition is orthogonal in the complex sense and parallel. Consequently, we have
\begin{equation}\label{decomposition}
\begin{split}
\Sigma M\otimes_{\mathbb C}(u^*TN)^{\mathbb C}=&(\Sigma^+M\otimes_{\mathbb C}u^*T_{1,0}N)\oplus(\Sigma^-M\otimes_{\mathbb C}u^*T_{1,0}N)\\
&\oplus(\Sigma^+M\otimes_{\mathbb C}u^*T_{0,1}N)\oplus(\Sigma^-M\otimes_{\mathbb C}u^*T_{0,1}N),
\end{split}
\end{equation}
 where we used $(u^*TN)^{\mathbb C}=u^*T_{1,0}N\oplus u^*T_{1,0}N$. Moreover, we also have the following decomposition for the  Dirac operator:
 \begin{equation*}
 \begin{split}
 \slashed{D}=&\slashed{D}_{1,0}+\slashed{D}_{0,1}\\
 \slashed{D}_{1,0}=&\slashed{D}_{1,0}^++\slashed{D}_{1,0}^-\\
  \slashed{D}_{0,1}=&\slashed{D}_{0,1}^++\slashed{D}_{0,1}^-,
 \end{split}
\end{equation*}
 where $\slashed{D}_{1,0}^\pm$ (resp. $\slashed{D}_{0,1}^\pm$) is obtained by restricting $\slashed{D}$ on $\Sigma^\pm M\otimes_{\mathbb C}u^*T_{1,0}N$ (resp. $\Sigma^\pm M\otimes_{\mathbb C}u^*T_{0,1}N$).

By \cite{nash1956imbedding}, we can isometrically embed $N$ into $\mathbb{R}^q$. Then \eqref{geldh1}-\eqref{geldh2} is equivalent to the  following system:
\begin{equation*}
    \begin{cases}
		\Delta_g{u}=II(du,du)+Re(P(\mathcal{S}(du(e_\beta),e_{\beta}\cdot\psi);\psi)), \\
		\slashed{\partial}\psi=\mathcal{S}(du(e_\beta),e_{\beta}\cdot\psi),
	\end{cases}
\end{equation*}
where $II$ is the second fundamental form of $N$ in $\mathbb{R}^q$, and
\begin{equation*}
\mathcal{S}(du(e_\beta),e_{\beta}\cdot\psi):=(\nabla{u^A}\cdot\psi^B)\otimes II(\partial_{z^A},\partial_{z^B}),
\end{equation*}
\begin{equation*}
Re(P(\mathcal{S}(du(e_\beta),e_{\beta}\cdot\psi);\psi)):=P(S(\partial_{z^C},\partial_{z^B});\partial_{z^A})Re(\langle\psi^A,du^C\cdot\psi^B\rangle).
\end{equation*}
Here $P(\xi;\cdot)$ denotes the shape operator, defined by $\langle P(\xi;X),Y\rangle=\langle A(X,Y),\xi\rangle$ for $X,Y\in\Gamma(TN)$ and $Re(z)$ denotes the real part of $z\in\mathbb{C}$. Together with the \emph{nearest point projection}:
\begin{equation*}
\pi: \ N_{\delta}\to N,
\end{equation*}
where $N_{\delta}:=\{z\in\mathbb{R}^q| d(z,N)\leq\delta\}$, we can rewrite the evolution equation \eqref{map} as an equation in $\mathbb{R}^q$.

\begin{lem}\cite{chen2017estimates}
A tuple $(u,\psi)$, where $u:[0,T]\times M\to N$ and $\psi\in \Gamma(\Sigma M\otimes u^*TN)$, is a solution of \eqref{map} if and only if
\begin{equation*}
\partial_tu^A-\Delta u^A=-\pi^A_{BC}(u)\langle\nabla u^B,\nabla u^C\rangle-\pi^A_B(u)\pi^C_{BD}(u)\pi^C_{EF}(\psi^D,\nabla u^E\cdot\psi^F)
\end{equation*}
on $(0,T)\times M$, for $A=1,\dots,q$. Here we denote the $A$-th component function of $u: [0,T]\times M\to N\subset\mathbb{R}^q$ by $u^A: M\to\mathbb{R}$, write $\pi^A_B(z)$ for the $B$-th partial derivative of the $A$-th component function of $\pi: \mathbb{R}^q\to\mathbb{R}^q$ and the global sections $\psi^A\in\Gamma(\Sigma M)$ are defined by $\psi=\psi^A\otimes(\partial_A\circ u)$, where $(\partial_A)_{A=1,\dots,q}$ is the standard basis of $T\mathbb{R}^q$. Moreover, $\nabla$ and $\langle\cdot,\cdot\rangle$ denote the gradient and the Riemannian metric on $M$, respectively.
\end{lem}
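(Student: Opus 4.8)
The plan is to deduce the stated equivalence from the classical translation of the two nonlinear terms in \eqref{map} --- the tension field $\tau(u)$ and the curvature term $\mathcal{R}(u,\psi)$ --- into the ambient coordinates supplied by the Nash embedding $N\hookrightarrow\mathbb{R}^q$ \cite{nash1956imbedding}. Concretely, I would show that for \emph{every} map $u$ with values in $N$ and every $\psi\in\Gamma(\Sigma M\otimes u^*TN)$ one has, in $\mathbb{R}^q$-coordinates and with the sign conventions fixed before the lemma,
\begin{equation*}
\tau(u)^A=\Delta u^A-\pi^A_{BC}(u)\langle\nabla u^B,\nabla u^C\rangle,\qquad \mathcal{R}(u,\psi)^A=\pi^A_B(u)\,\pi^C_{BD}(u)\,\pi^C_{EF}(u)\,(\psi^D,\nabla u^E\cdot\psi^F).
\end{equation*}
Granting these two identities, the tuple $(u,\psi)$ solves $\partial_tu=\tau(u)-\mathcal{R}(u,\psi)$ if and only if it solves the displayed parabolic system, since the two systems are then the same identity written in two ways; both identities hold for arbitrary $u$ into $N$ (and the hypotheses already force $u$ to take values in $N$), so the converse direction requires nothing extra. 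The only genuine geometric input is the second-order Taylor expansion of the nearest point projection $\pi\colon N_\delta\to N$ along $N$: for $z\in N$ the differential $d\pi_z=(\pi^A_B(z))$ is the orthogonal projection of $\mathbb{R}^q$ onto $T_zN$, while for $X,Y\in T_zN$ the Hessian satisfies $\pi^A_{BC}(z)X^BY^C=(II_z(X,Y))^A$, the second fundamental form of $N$ in $\mathbb{R}^q$; differentiating once more relates the third derivatives of $\pi$ to $\nabla II$ and, through the Weingarten relation, to the shape operator $P$.

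For the tension field, since $u(t,\cdot)$ has image in $N$ we may write $u^A=\pi^A\circ u$; applying the Laplace--Beltrami operator of $(M,g)$, using the chain rule twice, and invoking $d\pi_u$ as the tangential projection together with the Hessian identity above yields
\begin{equation*}
\Delta u^A=\pi^A_B(u)\,\Delta u^B+\pi^A_{BC}(u)\langle\nabla u^B,\nabla u^C\rangle,
\end{equation*}
and $\pi^A_B(u)\Delta u^B$ is exactly the tangential component of $\Delta u$ along $N$, which by definition is $\tau(u)^A$ \cite{jost2017riemannian}. This is the standard extrinsic form of the harmonic map operator underlying the Sacks--Uhlenbeck and harmonic map heat flow theory, and it is valid for any $u$ with values in $N$, without using any equation.

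For the curvature term, I would start from $\mathcal{R}(u,\psi)=\frac12 R^m_{lij}\langle\psi^i,\nabla u^l\cdot\psi^j\rangle\,\partial_{y^m}$ and substitute the Gauss equation of $N\subset\mathbb{R}^q$, $\langle R^N(X,Y)Z,W\rangle=\langle II(X,W),II(Y,Z)\rangle-\langle II(X,Z),II(Y,W)\rangle$, turning the Riemann tensor of $N$ into a quadratic expression in $II$. Using the skew-adjointness of Clifford multiplication on $\Sigma M$ and the resulting reality and symmetry properties of the spinor bilinears $\langle\psi^i,X\cdot\psi^j\rangle$, one checks that the two terms produced by the Gauss equation combine into a single term of the form $Re(P(\mathcal{S}(du(e_\beta),e_\beta\cdot\psi);\psi))$; re-expressing $II$ and the shape operator $P$ through the first and second derivatives of $\pi$ as in the previous paragraph then gives precisely $\pi^A_B(u)\,\pi^C_{BD}(u)\,\pi^C_{EF}(u)\,(\psi^D,\nabla u^E\cdot\psi^F)$, the leading $\pi^A_B(u)$ reflecting that the vector part of $\psi$ is tangent to $N$. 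This computation --- matching all sign conventions, the real-part prescription, and the algebraic symmetries of the spinor bilinears simultaneously --- is the step I expect to be the main obstacle; it is carried out in \cite{chen2017estimates}, and it parallels the original passage between the intrinsic and extrinsic forms of the Dirac-harmonic map system in \cite{chen2006dirac}.

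Finally, substituting the two identities into \eqref{map} produces the asserted parabolic system, and reading them backwards shows conversely that a tuple $(u,\psi)$ with $u$ valued in $N$ and $\psi\in\Gamma(\Sigma M\otimes u^*TN)$ solving the $\mathbb{R}^q$-system solves \eqref{map}. In particular no separate argument is needed to keep $u$ on $N$: this is part of the hypotheses on the tuple, and it is exactly what licenses the identity $u^A=\pi^A\circ u$ used throughout.
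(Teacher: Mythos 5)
The paper offers no proof of this lemma (it is quoted verbatim from \cite{chen2017estimates}), and your reduction to the two coordinate identities $\tau(u)^A=\Delta u^A-\pi^A_{BC}(u)\langle\nabla u^B,\nabla u^C\rangle$ and $\mathcal{R}(u,\psi)^A=\pi^A_B(u)\pi^C_{BD}(u)\pi^C_{EF}(u)\langle\psi^D,\nabla u^E\cdot\psi^F\rangle$ is exactly the content the paper itself records immediately after the lemma via $F_1^A$ and $F_2^A$, so your route is the intended one and the logical structure (both identities hold for any $N$-valued $u$, hence the equivalence is immediate in both directions) is correct. The only caveats are that the Gauss-equation computation identifying $\mathcal{R}(u,\psi)^A$ is sketched rather than executed (you defer it, as the paper does, to \cite{chen2017estimates}), and that no third derivatives of $\pi$ actually enter: the contraction $\pi^C_{BD}\pi^C_{EF}$ is already $\langle II(\partial_B,\partial_D),II(\partial_E,\partial_F)\rangle$, so the shape operator and the Gauss equation are captured by first and second derivatives of $\pi$ alone.
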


For future reference, we define
\begin{equation}\label{F1A}
F_1^A(u):=-\pi^A_{BC}(u)\langle\nabla u^B,\nabla u^C\rangle,
\end{equation}
\begin{equation}\label{F2A}
F_2^A(u,\psi):=-\pi^A_B(u)\pi^C_{BD}(u)\pi^C_{EF}(\psi^D,\nabla u^E\cdot\psi^F).
\end{equation}
Note that for $u\in C^1(M,N)$ and $\psi\in\Gamma(\Sigma M\otimes u^*TN)$ we have
\begin{equation*}
II(du_p(e_\alpha), du_p(e_\alpha)))=-F_1^A(u)|_p\partial_A|_{u(p)},
\end{equation*}
\begin{equation*}
\mathcal{R}(u,\psi)|_p=-F_2^A(u,\psi)|_p\partial_A|_{u(p)}
\end{equation*}
for all $p\in M$, where $\{e_\alpha\}$ is an orthonormal basis of $T_pM$.

Next, let us fix some notations, which will be used in the Section 4 and Appendix. For every $T>0$, we denote by $X_T$ the Banach space of bounded maps:
\begin{equation*}
X_T:=B([0,T]; C^1(M,\mathbb{R}^q)),
\end{equation*}
\begin{equation*}
\|u\|_{X_T}:=\max\limits_{A=1,\dots,q}\sup\limits_{t\in[0,T]}(\|u^A(t,\cdot)\|_{C^0(M)}+\|\nabla u^A(t,\cdot)\|_{C^0(M)}).
\end{equation*}
For any map $v\in X_T$, the closed ball with center $v$ and radius $R$ in $X_T$ is defined by
\begin{equation*}
B_R^T(v):=\{u\in X_T|\|u-v\|\leq R\}.
\end{equation*}
We denote by $P^{u_t,v_s}=P^{u_t,v_s}(x)$ the parallel transport of $N$ along the unique shortest geodesic from $\pi(u(x,t))$ to $\pi(v(x,s))$. We also denote by $P^{u_t,v_s}$ the inducing  mappings
\begin{equation*}
(\pi\circ u_t)^*TN\to(\pi\circ v_s)^*TN,
\end{equation*}
\begin{equation*}
\Sigma M\otimes(\pi\circ u_t)^*TN\to\Sigma M\otimes(\pi\circ v_s)^*TN
\end{equation*}
and
\begin{equation*}
\Gamma_{C^1}(\Sigma M\otimes(\pi\circ u_t)^*TN)\to\Gamma_{C^1}(\Sigma M\otimes(\pi\circ v_s)^*TN).
\end{equation*}
We also define
\begin{equation*}
\Lambda(u_t)=\sup\{\tilde\Lambda| {\rm spec}(\slashed{D}^{\pi\circ u_t})\setminus\{0\}\subset\mathbb{R}\setminus(-\tilde\Lambda(u_t),\tilde\Lambda(u_t))\}
\end{equation*}
 and $\gamma_t(x): [0,2\pi]\to\mathbb{C}$ as
 \begin{equation*}
 \gamma_t(x):=\frac{\Lambda(u_t)}{2}e^{ix}.
 \end{equation*}
 In general, we also denote by $\gamma$ the curve $\gamma(x): [0,2\pi]\to\mathbb{C}$ as
 \begin{equation}\label{general curve}
 \gamma(x):=\frac{\Lambda}{2}e^{ix}
 \end{equation}
 for some constant $\Lambda$ to be determined. Then the orthogonal projection onto ${\rm ker}(\slashed{D}^{\pi\circ u_t})$, which is the mapping
 \begin{equation*}
\Gamma_{L^2}(\Sigma M\otimes(\pi\circ u_t)^*TN)\To\Gamma_{L^2}(\Sigma M\otimes(\pi\circ u_t)^*TN),
\end{equation*}
can be written via the resolvent by
\begin{equation*}
s\mapsto-\frac{1}{2\pi i}\int_{\gamma_t}R(\lambda,\slashed{D}^{\pi\circ u_t})sd\lambda,
\end{equation*}
where $R(\lambda,\slashed{D}^{\pi\circ u_t}): \Gamma_{L^2}\to\Gamma_{L^2}$ is the resolvent of $\slashed{D}^{\pi\circ u_t}: \Gamma_{W^{1,2}}\to\Gamma_{L^2}$.

In the end of this section, we recall the definition of the index.

\begin{defn}\label{index alpha}
 Let $E\to M$ be a Riemannian real vector bundle with metric connection. Then one
can associate to the twisted Dirac operator $\slashed{D}^E: C^\infty(M,\Sigma M\otimes E)\to C^\infty(M,\Sigma M\otimes E)$ an index $I(M,\chi,E)\in KO_m({\rm pt})$, where
\begin{equation*}
		KO_m({\rm pt})\cong\begin{cases}
		\mathbb{Z}, & \text{if} \ m=0\ (4), \\
		\mathbb{Z}_2,& \text{if} \ m=1,2\ (8),\\
		0, & \text{otherwise}.
		\end{cases}
	\end{equation*}
The index $I(M,\chi,E)$ can be determined out of ${\rm ker}(\slashed{D}^E)$ using the following formula:
\begin{equation*}
	I(M,\chi,E)=\begin{cases}
		\{{\rm ch}(E)\cdot\hat{A}(M)\}[M], & \text{if} \ m=0\ (8), \\
		[{\rm dim}_\mathbb{C}({\rm ker}(\slashed{D}^E)]_{\mathbb{Z}_2},& \text{if} \ m=1\ (8),\\
		\left[\frac{{\rm dim}_\mathbb{C}({\rm ker}(\slashed{D}^E)}{2}\right]_{\mathbb{Z}_2},& \text{if} \ m=2\ (8),\\
		\frac12\{{\rm ch}(E)\cdot\hat{A}(M)\}[M], & \text{if} \ m=4\ (8).
		\end{cases}
	\end{equation*}

In particular, when $E=u^*TN$ and $\chi$ is fixed, we denote $I(M,\chi,E)$ by $I(M,u)$.
\end{defn}

\vspace{2em}
\section{Quaternionic structure on the twisted bundle }
In this section, we will prove Theorem \ref{inv} by constructing a commuting quaternionic structure on the twisted bundle $\Sigma M\otimes_{\mathbb{C}}u^*T_{1,0}N$ and show the density of the minimal kernel.

\begin{proof}[Proof of Theorem \ref{inv}]
Let $\rho: \mathbb{C}{\rm l}_m\to{\rm End}_{\mathbb{C}}(\Sigma_m)$ be an irreducible complex  representation of the complex Clifford algebra $\mathbb{C}{\rm l}_m$.  Suppose the condition (2) holds. Then every fibre of the complex spinor bundle $\Sigma M={\rm Spin}(M)\times_\rho\Sigma_m$ turns into a quaternionic vector space by defining
\begin{equation*}
[p,v]h:=[p,vh]
\end{equation*}
for all $p\in{\rm Spin}(M)$, $v\in\Sigma_m$ and $h\in\mathbb{H}$.

Since the tensor product of the twisted bundle $\Sigma M\otimes_{\mathbb{C}}u^*T_{1,0}N$ is taken over $\mathbb{C}$, there is a natural complex structure $I$ on $\Sigma M\otimes_{\mathbb{C}}u^*T_{1,0}N$ defined by
\begin{equation*}
I(\psi^k\otimes_{\mathbb{C}}\theta_k):=i(\psi^k)\otimes_{\mathbb{C}}\theta_k=\psi^k\otimes_{\mathbb{C}}i(\theta_k).
\end{equation*}
However, the quaternionic structure on $\Sigma M$ cannot directly extend to the twisted bundle. To overcome this problem, we need an extra structure on $u^*T_{1,0}N$. By our assumption, we define $J: \Sigma M\otimes_{\mathbb{C}} u^*T_{1,0}N\to\Sigma M\otimes_{\mathbb{C}} u^*T_{1,0}N$ by
\begin{equation*}
J(\psi^i\otimes_{\mathbb{C}}\theta_i):=j_1(\psi^k)\otimes_{\mathbb{C}}j_2(\theta_k).
\end{equation*}
Since both $j_1$ and $j_2$  anti-commute with the complex structure $i$, $J$ is well-defined on $\Sigma M\otimes_{\mathbb{C}} u^*T_{1,0}N$. By the definitions of $j_1$ and $j_2$, $J$  anti-commutes with $I$ and $J^2=-1$.  Moreover, $J$  also commutes with the Clifford multiplication and hence the Dirac operator $\slashed{D}^u_{1,0}$ (see also \cite{hermann2012dirac}), i.e.
\begin{equation*}
\slashed{D}^u_{1,0}\circ J=J\circ\slashed{D}^u_{1,0}.
\end{equation*}
Therefore, we conclude that all the eigenspaces of $\slashed{D}^u_{1,0}$ are quaternionic vector space with two complex structures $I$ and $J$, which are anti-commuting with each other.

If condition (1) holds, i.e. $j_1$ is a commuting real structure and $j_2$ is a quaternionic structure, then  it follows from the argument above that the conclusion is also true.

When $m\neq3({\rm mod} \ 4)$, the eigenvalues are symmetric with respect to the origin (see Remark 2.2.3 in \cite{hermann2012dirac}). For any two maps in $[u]$, there is a piecewise smooth curve connecting them with parameter $t\in[0,1]$. Along this curve, the eigenvalues of the Dirac operator are continuous functions of $t$. Suppose there is an eigenvalue $\lambda_1(t)$ that decreases to zero as $t\to T$. By the symmetry of the  eigenvalues, there is another eigenvalue $\lambda_{-1}(t)$ such that $\lambda_{-1}(t)=-\lambda_{1}(t)$. Therefore, the difference of the quaternionic dimension of the kernel of the corresponding Dirac operator is always an even number.

When $m$ is even, we have a parallel $\mathbb{Z}_2$-grading $G$ described in the previous section. From the orthogonality of the splitting, we have
\begin{equation*}
\langle\slashed{D}^{u}_{1,0}\psi^+,\psi^+\rangle=\langle\slashed{D}^u_{1,0}\psi^-,\psi^-\rangle=0
\end{equation*}
for all $\psi^\pm\in C^{\infty}(M, \Sigma^{\pm}M\otimes u^*T_{1,0}N)$. Thus,
\begin{equation}\label{zero L2}
(\slashed{D}^{u}_{1,0}\psi^+,\psi^+)_{L^2}=(\slashed{D}^{u}_{1,0}\psi^-,\psi^-)_{L^2}=0.
\end{equation}

Now, for any smooth variation $(u_s)_{s\in(-\epsilon,\epsilon)}$ of the $\alpha$-harmonic map $\tilde{u}\in[u]$ with $u_s|_{s=0}=\tilde{u}$, we split the bundle $\Sigma M\otimes u_s^*T_{1,0}N$ into
\begin{equation*}
\Sigma M\otimes u_s^*T_{1,0}N=(\Sigma^+M\otimes u_s^*T_{1,0}N)\oplus(\Sigma^-M\otimes u_s^*T_{1,0}N),
\end{equation*}
 which is orthogonal in the complex sense and parallel. Since $\mathcal{I}(M, u^*T_{1,0}N)\neq0$, there exists $\Psi\in{\rm ker}{\slashed{D}^{\tilde{u}}_{1,0}}$ which can be written as $\Psi=\Psi^++\Psi^-$, where $\Psi^{\pm}\in\Gamma(\Sigma^{\pm}M\otimes \tilde{u}^*T_{1,0}N)$. Then there always exists a variation $\Psi_s$ of $\Psi$ such that $\Psi_s^{\pm}\in\Gamma(\Sigma^{\pm}M\otimes u_s^*T_{1,0}N)$  are smooth variations of $\Psi^\pm$, respectively. Moreover, \eqref{zero L2} implies that
\begin{equation*}
\frac{d}{dt}\bigg|_{s=0}(\slashed{D}^{u_s}\Psi_s^\pm,\Psi_s^\pm)_{L^2}=0.
\end{equation*}
Therefore, for the $\alpha$-harmonic map $\tilde{u}$, we have 
\begin{equation*}
\frac{d}{dt}\bigg|_{s=0}L^\alpha(u_s,\Psi_s^{\pm})=\frac{d}{dt}\bigg|_{s=0}\int_{M}(1+|du_s|^2)^\alpha=0.
\end{equation*}
Hence, we get $\alpha$-Dirac-harmonic maps $(\tilde{u},\Psi^\pm)$.

\end{proof}

In the rest of this section, we will show the density of the minimal kernel. By the definition of $\mathcal{I}(M, u^*T_{1,0}N)$, we have
\begin{equation*}
{\rm dim}_{\mathbb H}{\rm ker}(\slashed{D}_{1,0}^{u})\geq
\begin{cases}
0,& \text{if} \ {\rm ind}_{u^*T_{1,0}N}(M)=0;  \\
1,& \text{if} \ {\rm ind}_{u^*T_{1,0}N}(M)\neq0.
\end{cases}
\end{equation*}
If equality holds above, then we say that $\slashed{D}_{1,0}^{u}$ has minimal kernel. Using the analyticity of $N$, one can prove the following density result for the  minimal kernel.

\begin{lem}\label{density}
If $\slashed{D}_{1,0}^{u}$ has minimal kernel, then  $\slashed{D}_{1,0}^{u'}$ also has minimal kernel for a generic map $u'\in[u]$.
\end{lem}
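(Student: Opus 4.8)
The plan is to exploit the analytic dependence of the twisted Dirac operator $\slashed{D}_{1,0}^{u'}$ on the map $u'$, together with the quaternionic structure established in Theorem \ref{inv}, to show that the set of $u'\in[u]$ with non-minimal kernel is contained in the zero set of a real-analytic function, hence meager. First, I would fix a real-analytic path (or, more usefully, a finite-dimensional real-analytic family) of maps in $[u]$ through the given $u$: since $N$ is assumed analytic, one can connect any two maps in the homotopy class by an analytic homotopy, and more generally one can produce an analytic family $u_s$, $s$ in a neighborhood of $0$ in some $\mathbb{R}^k$, with $u_0 = u$, whose image sweeps out a neighborhood of $u$ in $[u]$ in the appropriate topology. (This is the construction alluded to parenthetically in the proof of Theorem \ref{inv}; I would carry it out by composing $u$ with flows of analytic vector fields on $N$, or by perturbing in normal coordinates and projecting via $\pi$.) The key analytic input is that, after trivializing the relevant bundles, $s\mapsto \slashed{D}_{1,0}^{u_s}$ is a real-analytic family of self-adjoint elliptic operators with common principal symbol, so Kato's analytic perturbation theory applies: the eigenvalues and spectral projections can be locally organized into analytic branches.

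Next, I would use the quaternionic structure. By Theorem \ref{inv}, every eigenspace of $\slashed{D}_{1,0}^{u_s}$ is a quaternionic vector space, so all eigenvalue multiplicities (counted in $\dim_{\mathbb{H}}$) are at least what minimality predicts, and the jump in $\dim_{\mathbb{H}}\ker$ is controlled by how many analytic eigenvalue branches hit $0$. Since $\mathcal{I}(M,u^*T_{1,0}N)$ (equivalently the index ${\rm ind}_{u^*T_{1,0}N}(M)$) is a homotopy invariant, the \emph{excess} $\dim_{\mathbb{H}}\ker\slashed{D}_{1,0}^{u_s}$ minus its minimal value is even and is the number of analytic branches $\lambda_j(s)$ with $\lambda_j(0)=0$ but $\lambda_j\not\equiv 0$, together with those identically zero. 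The point is that $\slashed{D}_{1,0}^{u}$ having minimal kernel means precisely that \emph{none} of these branches is identically zero on the family (if one were, minimality would fail already at $u$ — here I would need to argue, using that the family is rich enough to be "transverse," that an identically-vanishing branch forces non-minimality at nearby generic points and hence is ruled out by the hypothesis at $u$ being an \emph{open} condition along suitable directions; alternatively, restrict attention to the locus where minimality holds and note it is nonempty by hypothesis). Therefore each such $\lambda_j$ is a nonzero real-analytic function of $s$, so its zero set is nowhere dense, and the finite union of these zero sets (over the finitely many branches meeting $0$) is nowhere dense; off this union the kernel is minimal.

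Finally I would globalize: cover a neighborhood of $u$ in $[u]$ by countably many such analytic charts, conclude that the non-minimal locus is a countable union of nowhere-dense real-analytic subvarieties, hence the minimal-kernel maps form a dense (indeed residual) subset of $[u]$ — this is the meaning of "generic" in the statement.

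The main obstacle I anticipate is \emph{not} the perturbation theory (which is standard once the bundles are trivialized along the family) but the transversality/richness step: one must ensure the chosen analytic family $u_s$ is large enough that "$\slashed{D}_{1,0}^{u}$ has minimal kernel" genuinely excludes identically-vanishing eigenvalue branches within that family, rather than merely saying that if some branch vanishes identically then minimality fails at $u$ itself. Handling this cleanly requires either (i) working with an infinite-dimensional analytic Banach-manifold of maps and invoking an analytic Sard-type or Baire argument, or (ii) checking that for any candidate direction the second variation / first-order perturbation of the eigenvalue at $0$ is not forced to vanish — using the explicit form of $\slashed{D}_{1,0}^{u_s}$ and the freedom in choosing the analytic variation of $N$. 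I would also need to be careful that the spectral projection onto $\ker\slashed{D}_{1,0}^{u_s}$, written via the resolvent integral $-\frac{1}{2\pi i}\int_{\gamma}R(\lambda,\slashed{D}_{1,0}^{u_s})\,d\lambda$ over a fixed small circle $\gamma$ as in \eqref{general curve}, has analytically varying rank away from the bad locus, which again is a consequence of Kato theory once the circle $\gamma$ is chosen to avoid the nonzero spectrum uniformly on a small parameter neighborhood.
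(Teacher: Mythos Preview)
Your core strategy---analytic dependence of $\slashed{D}_{1,0}^{u_s}$ on the parameter, combined with Kato-type perturbation theory---is sound and is what the paper uses. However, your anticipated ``main obstacle'' (transversality/richness of the family) is a phantom, and chasing it has led you to a considerably more elaborate scheme than is needed.

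The point you are missing is this: along a \emph{single} one-parameter real-analytic family $u_s$ with $u_0=u$, the function $s\mapsto \dim_{\mathbb{H}}\ker\slashed{D}_{1,0}^{u_s}$ is upper semicontinuous and constant off a discrete set. Its generic value is therefore the minimum over the path, which is at most the value at $s=0$; but that value is already the minimal one by hypothesis, so the generic value \emph{is} minimal. No transversality, no multi-parameter charts, no Sard argument is required. Your worry about ``identically-vanishing branches'' is also slightly garbled: the branches that are identically zero are precisely those constituting the minimal kernel, and any branch hitting zero at some $s\neq 0$ is automatically nonzero at $s=0$ (otherwise the kernel at $u$ would be too large), hence not identically zero.

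The paper implements this very directly. Given any $u'\in[u]$, pick a homotopy $H$ from $u$ to $u'$, cover its image by finitely many balls $V_1,\dots,V_L$ of radius below $\tfrac12\operatorname{inj}(N)$ with $V_l\cap V_{l+1}\neq\emptyset$, $u\in V_1$, $u'\in V_L$. On each step choose $u_l\in V_l\cap V_{l+1}$ and connect $u_{l-1}$ to $u_l$ by the geodesic homotopy $H^l_t(x)=\exp_{u_{l-1}(x)}\bigl(t\exp_{u_{l-1}(x)}^{-1}u_l(x)\bigr)$, which is real-analytic in $t$ because $N$ is analytic. Conjugating by parallel transport gives a real-analytic one-parameter family of Dirac operators on a fixed bundle, so by the observation above the kernel is minimal for all but finitely many $t\in[0,1]$; in particular one can pick $u_l$ with minimal kernel and iterate. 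Arriving at the last ball one obtains minimal-kernel maps arbitrarily close to $u'$, giving $C^\infty$-density; $C^1$-openness follows from continuity of eigenvalues.

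Two further remarks. First, the quaternionic structure from Theorem~\ref{inv} is not actually invoked here: the argument only needs that the minimal dimension is a lower bound, which is the definition. Second, your Baire/residual formulation is legitimate but heavier than what the paper establishes (dense and open), and the globalization via countably many infinite-dimensional analytic charts is unnecessary once you argue path-by-path as above.
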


\begin{proof}
Let $u'\in[u]$ and $H$ be any homotopy between $u'$ and $u$. More precisely, $H: [0,1]\to C^\infty(M,N)$ with $H(0)=u$ and $H(1)=u'$. We can cover the image of $H$ by finitely many balls $\{V_l\}_{l=1}^L$ of radius less than $\frac12{\rm inj}(N)$ such that
\begin{equation*}
V_l\cap V_{l+1}\neq\emptyset \ \text{for} \ i=1,\cdots,L-1
\end{equation*}
and
\begin{equation*}
u\in V_1, \ \ \ u'\in V_L.
\end{equation*}

We choose $u_1\in V_1\cap V_2$ arbitrarily and define a homotopy $H^1_t$ by
\begin{equation*}
H^1_t(x):=\exp_{u(x)}(t\exp^{-1}_{u(x)}u_1(x)),
\end{equation*}
where $x\in M$ and $\exp$ is the exponential map on $N$. We denote by
\begin{equation*}
P_{t}=P_t(x): T_{1,0}N|_{u(x)}\to T_{1,0}N|_{H^1_t(x)}
\end{equation*}
the parallel transport along the unique shortest geodesic of $N$ connecting $u(x)$ and $H^1_t(x)$ and consider
\begin{equation*}
\slashed{D}_t:=P_t^{-1}\circ\slashed{D}_{1,0}^{H^1_t}\circ P_t.
\end{equation*}
Since $\slashed{D}_t$ depends analytically on $t$ by the analyticity of $N$, $\slashed{D}^{u_t}_{1,0}$ has minimal kernel for all but finitely many $t\in[0,1]$. Therefore, we can assume $\slashed{D}^{u_1}_{1,0}$ has minimal kernel. Continuing this procedure, we can get $u_{L-1}\in V_{L-1}\cap V_{L}$ such that $\slashed{D}^{u_{L-1}}_{1,0}$ also has minimal kernel and a homotopy $H^{L-1}_t$ between $u_{L-1}$ and $u'$ such that $\slashed{D}^{H^{L-1}_t}_{1,0}$ has minimal kernel for all but finitely many $t\in[0,1]$. Hence the set of maps along which the $(1,0)$-part of the Dirac operator has minimal kernel is $C^\infty$-dense in $[u]$. Its $C^1$-openness directly follows from the continuity of the eigenvalues.

\end{proof}

\vspace{2em}
\section{The heat flow for \texorpdfstring{$\alpha$}{alpha}-Dirac-harmonic maps}
In this section, we will prove the short-time existence of the heat flow for $\alpha$-Dirac-harmonic maps. Since we are working on a closed surface $M$, we cannot uniquely solve the Dirac equation in the following system:
\begin{numcases}{}
		\partial_tu=\frac{1}{(1+|\nabla u|^2)^{\alpha-1}}\bigg(\tau^\alpha(u)-\frac{1}{\alpha}\mathcal{R}(u,\psi)\bigg), \label{alpha map}\\
		\slashed{D}^{u}\psi=0.	\label{alpha dirac}
	\end{numcases}
 The short time existence and its extension are the obstacles. This system (if it converges) leads to an $\alpha$-Dirac-harmonic map which is a solution of the system
 \begin{align*}
\begin{cases}
\tau^\alpha(u):=\tau((1+|du|^2)^\alpha)=\frac{1}{\alpha}\mathcal{R}(u,\psi),\\
\slashed{D}^u\psi=0,
\end{cases}
\end{align*}
 where $\tau$ is the tension field.

\subsection{Short time existence}

As in Section 2, we now embed $N$ into $\mathbb{R}^q$. Let $u: M\to N$ with $u=(u^A)$ and denote the spinor along the map $u$ by $\psi=\psi^A\otimes(\partial_A\circ u)$, where $\psi^A$ are spinors over $M$. For any smooth map $\eta\in C^\infty_0(M,\mathbb{R}^q)$ and any smooth spinor field $\xi\in C^\infty_0(\Sigma M\otimes\mathbb{R}^q)$, we consider the variation
\begin{equation}
u_t=\pi(u+t\eta), \ \ \ \psi^A_t=\pi^A_B(u_t)(\psi^B+t\xi^B),
\end{equation}
where $\pi$ is the nearest point projection as in Section 2. Then we have
\begin{lem}[\cite{jost2019short}]\label{alpha EL}
The Euler-Lagrange equations for $L^\alpha$ are
\begin{equation*}
\begin{split}
\Delta u^A&=-2(\alpha-1)\frac{\nabla^2_{\beta\gamma}u^B\nabla_{\beta}u^B\nabla_{\gamma}u^A}{1+|\nabla u|^2}+\pi^A_{BC}(u)\langle\nabla{u^B},\nabla{u^C}\rangle\\
&\quad+\frac{\pi^A_B(u)\pi^C_{BD}(u)\pi^C_{EF}(u)\langle\psi^D,\nabla{u}^E\cdot\psi^F\rangle}{\alpha(1+|\nabla{u}|^2)^{\alpha-1}}
\end{split}\end{equation*}
and
\begin{equation*}
\slashed{\partial}\psi^A=\pi^A_{BC}(u)\nabla{u}^B\cdot\psi^C.
\end{equation*}
\end{lem}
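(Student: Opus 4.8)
The plan is to obtain both equations as the first variation of $L^\alpha$ along the constrained variations $u_t=\pi(u+t\eta)$, $\psi^A_t=\pi^A_B(u_t)(\psi^B+t\xi^B)$ stated in the lemma. These are admissible: $u_t$ is valued in $N$, and since $\pi^\perp\pi=0$ the field $\psi_t$ is tangent along $u_t$, i.e.\ a section of $\Sigma M\otimes u_t^*TN$. After the Nash embedding $N\hookrightarrow\mathbb{R}^q$ one uses the standard identity $\langle\psi,\slashed D^u\psi\rangle_{\Sigma M\otimes u^*TN}=\sum_A\langle\psi^A,\slashed\partial\psi^A\rangle_{\Sigma M}$ for $\psi$ tangent along $u$, which holds because $\slashed D^u\psi-\slashed\partial\psi$ is valued in the normal bundle of $N$ and hence $\langle\psi,\cdot\rangle$-orthogonal to $\psi$. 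Thus
\[
L^\alpha(u_t,\psi_t)=\frac12\int_M(1+|\nabla u_t|^2)^\alpha+\frac12\int_M\langle\psi^A_t,\slashed\partial\psi^A_t\rangle,
\]
with the tangency constraint $\pi^{\perp,A}_B(u_t)\psi^B_t=0$ carried along by the variation. Differentiating at $t=0$ reduces everything to elementary calculus once one records $\dot u^A=\pi^A_B(u)\eta^B=:V^A$ (which is $N$-tangent) and $\dot\psi^A=\pi^A_{BC}(u)V^C\psi^B+\pi^A_B(u)\xi^B$, with $\pi^A_{BC}:=\partial_C\pi^A_B$.

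For the energy, integration by parts gives
\[
\frac{d}{dt}\Big|_0\frac12\int_M(1+|\nabla u_t|^2)^\alpha=-\int_M\alpha(1+|\nabla u|^2)^{\alpha-1}\Big(\Delta u^A+2(\alpha-1)\frac{\nabla^2_{\beta\gamma}u^B\nabla_\beta u^B\nabla_\gamma u^A}{1+|\nabla u|^2}\Big)V^A,
\]
the second term coming exactly from differentiating the weight $(1+|\nabla u|^2)^{\alpha-1}$. For the Dirac term, self-adjointness of $\slashed\partial$ gives $\frac{d}{dt}\big|_0\frac12\int_M\langle\psi^A_t,\slashed\partial\psi^A_t\rangle=\int_M\langle\dot\psi^A,\slashed\partial\psi^A\rangle$. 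Taking $\eta=0$ isolates $\int_M\langle\pi^A_B(u)\xi^B,\slashed\partial\psi^A\rangle=\int_M\langle\xi^B,(\slashed\partial\psi)^{\top,B}\rangle$, whose vanishing for all $\xi$ forces $(\slashed\partial\psi)^\top=0$; combined with the identity $\pi^{\perp,A}_B(u)\slashed\partial\psi^B=\pi^A_{BC}(u)\nabla u^B\cdot\psi^C$, obtained by applying $\slashed\partial$ to $\pi^{\perp,A}_B(u)\psi^B=0$ and using $\pi^{\perp,A}_{BC}=-\pi^A_{BC}$, this yields the second equation $\slashed\partial\psi^A=\pi^A_{BC}(u)\nabla u^B\cdot\psi^C$. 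Next, taking $\xi=0$, substituting this Dirac equation into $\int_M\langle\pi^A_{BC}(u)V^C\psi^B,\slashed\partial\psi^A\rangle$ and writing $V^C=\pi^C_D(u)\eta^D$ turns the Dirac contribution into $\int_M\eta^A\,\pi^A_B(u)\pi^C_{BD}(u)\pi^C_{EF}(u)\langle\psi^D,\nabla u^E\cdot\psi^F\rangle$. Adding the energy contribution, using that $(\pi^A_B(u))$ is the orthogonal projection onto $T_uN$ (so that $\pi^A_C(u)\Delta u^A=\tau(u)^C$ and, $\nabla u$ being tangent, the $\alpha$-term keeps its shape), together with $\tau(u)^A=\Delta u^A-\pi^A_{BC}(u)\langle\nabla u^B,\nabla u^C\rangle$ from Section~2, and dividing by $\alpha(1+|\nabla u|^2)^{\alpha-1}$, gives the stated equation for $\Delta u^A$.

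The main obstacle is bookkeeping rather than analysis: since the test fields $\eta,\xi$ are arbitrary $\mathbb{R}^q$-valued rather than $N$-tangent, one has to check that all projection factors $\pi^A_B(u)$, $\pi^A_{BC}(u)$ in the final equations are produced correctly by the chain rule for $\pi$ and by differentiating the tangency constraint, and that the normal components of both equations are satisfied automatically — which is precisely why the second fundamental form term $\pi^A_{BC}(u)\langle\nabla u^B,\nabla u^C\rangle$ must be kept. The only genuinely $\alpha$-dependent feature is the extra term $2(\alpha-1)(1+|\nabla u|^2)^{-1}\nabla^2_{\beta\gamma}u^B\nabla_\beta u^B\nabla_\gamma u^A$, which one must take care not to lose when integrating by parts; at $\alpha=1$ the whole computation reduces to the classical first-variation formula for Dirac-harmonic maps, so the geometric content is already there and the present statement is its Sacks--Uhlenbeck-type deformation.
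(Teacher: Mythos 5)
Your derivation is correct and is exactly the intended one: the paper does not reprove this lemma but cites \cite{jost2019short}, and the constrained variations $u_t=\pi(u+t\eta)$, $\psi^A_t=\pi^A_B(u_t)(\psi^B+t\xi^B)$ displayed immediately before the statement are precisely the ones you differentiate, with the same splitting into the tangential EL equation $(\slashed\partial\psi)^\top=0$ plus the normal identity from the tangency constraint, and the same integration by parts producing the $2(\alpha-1)$-term. All index bookkeeping (symmetry of $\pi^A_{BC}$, tangency of $\nabla u$, $\pi^A_B\Delta u^B=\Delta u^A-\pi^A_{BC}\langle\nabla u^B,\nabla u^C\rangle$) checks out against the stated equations.
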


 Lemma \ref{alpha EL} implies that \eqref{alpha map}-\eqref{alpha dirac} is equivalent to
\begin{numcases}{}
\begin{aligned}
		\partial_tu^A&=\Delta{u^A}+2(\alpha-1)\frac{\nabla^2_{\beta\gamma}u^B\nabla_{\beta}u^B\nabla_{\gamma}u^A}{1+|\nabla u|^2}-\pi^A_{BC}(u)\langle\nabla{u^B},\nabla{u^C}\rangle \label{system in Euclidean1}\\
		&\quad-\frac{\pi^A_B(u)\pi^C_{BD}(u)\pi^C_{EF}(u)\langle\psi^D,\nabla{u}^E\cdot\psi^F\rangle}{\alpha(1+|\nabla{u}|^2)^{\alpha-1}},
		\end{aligned}\\
		\slashed{D}^{\pi\circ u}\psi=0,	\label{system in Euclidean2}
	\end{numcases}

Now, let us state the main result of this subsection.

\begin{thm}\label{short-time alpha flow}
Let $M$ be a closed surface, and $N$ a closed $n$-dimensional Riemannian manifold. Let $u_0\in C^{2+\mu}(M,N)$ for some $0<\mu<1$ with ${\rm dim}_{\mathbb H}{\rm ker}(\slashed{D}_{1,0}^{u_0})=1$ and $\psi_0\in{\rm ker}(\slashed{D}_{1,0}^{u_0})$ with $\|\psi_0\|_{L^2}=1$. Then there exists $\epsilon_1=\epsilon_1(M,N)>0$ such that, for any $\alpha\in(1,1+\epsilon_1)$, the problem \eqref{alpha map}-\eqref{alpha dirac} has a solution $(u,\psi)$ with
\begin{equation}\label{initial value}
		\begin{cases}
		 \|\psi_t\|_{L^2}=1, & \ \forall t\in[0,T],\\
		u|_{t=0}=u_0, \  \psi|_{t=0}=\psi_0.
		\end{cases}
	\end{equation}
 satisfying
\begin{equation*}
u\in C^{2+\mu,1+\mu/2}(M\times[0,T],N)
\end{equation*}
and
\begin{equation*}
\psi\in C^{\mu,\mu/2}(M\times[0,T], \Sigma M\otimes u^*TN)\cap L^\infty([0,T];C^{1+\mu}(M)).
\end{equation*}
for some $T>0$.
\end{thm}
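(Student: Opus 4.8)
The plan is to follow the fixed-point scheme of \cite{jost2019short} for the $\alpha$-Dirac-harmonic map flow, adapting it so that the Dirac equation \eqref{alpha dirac} is replaced by the constrained system in which we track the one-dimensional quaternionic kernel of $\slashed{D}_{1,0}^{u}$ rather than of $\slashed{D}^{u}$. First I would decouple the problem: given a map $u\in X_T$ staying $C^1$-close to $u_0$, the hypothesis $\dim_{\mathbb H}\ker(\slashed{D}_{1,0}^{u_0})=1$ together with the continuity of the eigenvalues (and the $C^1$-openness of the minimal-kernel condition established in Lemma \ref{density}) guarantees that $\dim_{\mathbb H}\ker(\slashed{D}_{1,0}^{\pi\circ u_t})=1$ for all $t\in[0,T]$, provided $R$ and $T$ are small enough that $\|u-u_0\|_{X_T}\le R$. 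Hence for each such $u$ one can solve the constraint problem uniquely up to the $\mathbb H$-action: there is a $\psi=\psi(u)$ with $\slashed{D}_{1,0}^{\pi\circ u_t}\psi_t=0$, $\|\psi_t\|_{L^2}=1$, and $\psi_0$ prescribed, obtained by projecting $P^{u_0,u_t}\psi_0$ onto $\ker(\slashed{D}_{1,0}^{\pi\circ u_t})$ via the resolvent integral $-\frac{1}{2\pi i}\int_{\gamma_t}R(\lambda,\slashed{D}^{\pi\circ u_t}_{1,0})\,\cdot\,d\lambda$ and renormalizing; the solvability of this constraint equation and the Lipschitz dependence $u\mapsto\psi(u)$ in the relevant norms is exactly the content deferred to the Appendix, so I would cite it here. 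The parallel real structure $j_2$ on $T_{1,0}N$ enters precisely to make this work: by Theorem \ref{inv} it endows $\ker(\slashed{D}_{1,0}^{\pi\circ u_t})$ with a quaternionic structure, so "dimension $1$ over $\mathbb H$" is the correct minimality notion and the projection lands in a quaternionic line, which is what lets us normalize consistently along the flow.

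Next I would set up the fixed-point map $\Phi:B_R^T(u_0)\to X_T$ by feeding $\psi(u)$ into the right-hand side of \eqref{system in Euclidean1} and solving the resulting \emph{linear} inhomogeneous parabolic system
\begin{equation*}
\partial_t v^A-\Delta v^A=2(\alpha-1)\frac{\nabla^2_{\beta\gamma}u^B\nabla_\beta u^B\nabla_\gamma u^A}{1+|\nabla u|^2}+F_1^A(u)+\frac{1}{\alpha(1+|\nabla u|^2)^{\alpha-1}}F_2^A(u,\psi(u)),\qquad v|_{t=0}=u_0,
\end{equation*}
using the notation \eqref{F1A}--\eqref{F2A}, so that $\Phi(u)=v$ and a fixed point of $\Phi$ solves \eqref{system in Euclidean1}. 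Here the point is that the quasilinear term $2(\alpha-1)\nabla^2_{\beta\gamma}u^B\nabla_\beta u^B\nabla_\gamma u^A/(1+|\nabla u|^2)$ carries the small factor $\alpha-1$, which is the mechanism by which choosing $\alpha\in(1,1+\epsilon_1)$ keeps the map a contraction; one treats this term as part of the source evaluated at the previous iterate $u$ rather than at $v$. I would then run the standard $L^p$/Schauder parabolic estimates (e.g. via the heat semigroup on $M$ and the smoothing estimate $\|e^{t\Delta}f\|_{C^1}\lesssim t^{-1/2}\|f\|_{C^0}$, integrated in $t$): combined with the $C^1$-bounds on $u$, the uniform bound $\|\psi(u)_t\|_{L^2}=1$ upgraded to a $C^{1+\mu}$-bound in space by elliptic regularity for $\slashed\partial$ along the map (using \eqref{system in Euclidean2} rewritten as $\slashed\partial\psi^A=\pi^A_{BC}(u)\nabla u^B\cdot\psi^C$), these give $\|\Phi(u)-u_0\|_{X_T}\le C(R)\,(T^{1/2}+\epsilon_1)$, so that for $T$ and $\epsilon_1$ small $\Phi$ maps $B_R^T(u_0)$ into itself. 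The analogous estimate on differences $\|\Phi(u_1)-\Phi(u_2)\|_{X_T}$, which requires the Lipschitz estimate $\|\psi(u_1)-\psi(u_2)\|\lesssim\|u_1-u_2\|$ from the Appendix plus Lipschitz dependence of $F_1,F_2,\pi$ on their arguments, yields a contraction constant $\le\frac12$ after further shrinking $T$ and $\epsilon_1$.

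Having obtained a fixed point $u\in X_T$ with its companion $\psi=\psi(u)$, the last step is bootstrapping regularity. From $u\in B([0,T];C^1)$ and $\psi\in L^\infty([0,T];C^{1+\mu})$ one reads off that the right-hand side of \eqref{system in Euclidean1} lies in $C^{\mu,\mu/2}$ in the parabolic sense (using $u_0\in C^{2+\mu}$ for the initial data), so linear parabolic Schauder theory gives $u\in C^{2+\mu,1+\mu/2}(M\times[0,T],N)$; feeding the improved regularity of $u$ back into the elliptic Dirac equation \eqref{system in Euclidean2} and the resolvent formula for the projection gives $\psi\in C^{\mu,\mu/2}(M\times[0,T],\Sigma M\otimes u^*TN)\cap L^\infty([0,T];C^{1+\mu}(M))$, and the normalization $\|\psi_t\|_{L^2}=1$ with $(u,\psi)|_{t=0}=(u_0,\psi_0)$ holds by construction. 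I expect the main obstacle to be the constraint step: showing that the $\mathbb H$-line $\ker(\slashed{D}_{1,0}^{\pi\circ u_t})$ stays one-dimensional, that the renormalized resolvent projection of the parallel transport of $\psi_0$ is nonzero and depends Lipschitz-continuously on $u$ in the $X_T$-topology (with the right loss of derivatives balanced against the gain in the parabolic smoothing), and that the time-regularity of $t\mapsto\psi_t$ coming only through $u$ is enough — this is where the analyticity/parallel-structure hypotheses on $N$ are genuinely used and where the bulk of the technical work, relegated to the Appendix, resides.
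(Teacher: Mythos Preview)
Your overall architecture matches the paper: solve the constraint equation for $\psi$ via the resolvent projection onto the one-dimensional quaternionic kernel (Appendix Lemmas \ref{dim of kernel u}, \ref{projection to kernel}, \ref{lip}), plug $\psi(u)$ into the map equation written extrinsically in $\mathbb{R}^q$, and iterate. However, there are two genuine gaps.

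\textbf{The quasilinear term cannot be treated as a source in $X_T$.} You propose to put $2(\alpha-1)\frac{\nabla^2_{\beta\gamma}u^B\nabla_\beta u^B\nabla_\gamma u^A}{1+|\nabla u|^2}$ on the right-hand side, evaluated at the previous iterate $u\in B_R^T(u_0)\subset X_T=B([0,T];C^1(M,\mathbb{R}^q))$. But $\nabla^2 u$ is not defined for such $u$, and the heat-semigroup smoothing $\|e^{t\Delta}f\|_{C^1}\lesssim t^{-1/2}\|f\|_{C^0}$ gains only one derivative, not two, so the Duhamel integral does not close in $X_T$ no matter how small $\alpha-1$ is. The paper handles this differently: in equation \eqref{D1} the second-order term is kept on the \emph{unknown} side as $2(\alpha-1)\frac{\nabla^2_{\beta\gamma}w^B\nabla_\beta(v+u_0)^B\nabla_\gamma(v+u_0)^A}{1+|\nabla(v+u_0)|^2}$, so that the linear problem for $w$ has principal symbol $\Delta + O(\alpha-1)$, which is uniformly parabolic once $\alpha-1<\epsilon_1(M,N)$. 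The smallness of $\alpha-1$ is used for parabolicity of the linearized operator, not to beat a derivative loss in the source. Relatedly, the paper does not prove a contraction at all: it iterates in $V^T\subset C^{1+\mu,\frac{1+\mu}{2}}$, obtains uniform $C^{2+\mu,1+\mu/2}$ bounds on the sequence $v_k$, and extracts a $C^{2,1}$-convergent subsequence by compactness. Your Banach fixed-point route could be made to work, but only after moving the quasilinear piece to the left.

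\textbf{The step ``$u$ takes values in $N$'' is missing.} Your fixed point lives a priori in $\mathbb{R}^q$, since the iteration is extrinsic and only $\psi$ is forced to sit over $\pi\circ u$. The paper devotes Step~2 to this: with $\rho(z)=z-\pi(z)$ and $\varphi=\|\rho(u)\|^2$, one computes $(\partial_t-\Delta)\varphi\le C\varphi$ using that the $F_1$ and $F_2$ contributions are tangential after projection, and concludes $\varphi\equiv 0$ from $\varphi(\cdot,0)=0$. Without this argument (or an intrinsic formulation) you have not shown $u(M\times[0,T])\subset N$, which is part of the assertion. Your invocation of the parallel real structure $j_2$ is also slightly misplaced here: the short-time existence theorem assumes only the minimal-kernel hypothesis and does not use Theorem~\ref{inv}; the quaternionic structure is background ensuring the hypothesis is meaningful, not an ingredient of the proof.
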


\begin{proof}
We will prove the theorem in two steps. In Step 1, we will find a solution $u: M\times[0,T]\to\mathbb{R}^q$ and $\psi_t: M\to\Sigma M\otimes(\pi\circ{u_t})^*TN$ of \eqref{system in Euclidean1}-\eqref{system in Euclidean2} with the initial values \eqref{initial value}. Since $\psi_t$ takes the value along the projection $\pi\circ{u_t}$, it remains to show $u$ takes the value in $N$, which will be proved in Step 2.

{\bf Step 1:} Solving \eqref{system in Euclidean1}-\eqref{system in Euclidean2} in $\mathbb{R}^q$.

   We first give a solution to \eqref{system in Euclidean2} in a neighborhood of $u_0$. For any $T>0$, we can choose $\epsilon$, $\delta$ and $R$ as in the Appendix such that
\begin{equation*}
u(x,t)\in N_\delta
\end{equation*}
and
\begin{equation*}
d^N((\pi\circ u)(x,t),(\pi\circ v)(x,s))<\epsilon<\frac12{\rm inj}(N)
\end{equation*}
for all $u,v\in B^T_R:=B^T_R(\bar{u}_0)=\{u\in X_T|\|u-\bar{u}_0\|_{X_T}\leq R\}\cap\{u|_{t=0}=u_0\}$, $x\in M$ and $t,s\in[0,T]$, where $\bar{u}_0(x,t)=u_0(x)$ for any $t\in[0,T]$. If $R$ is small enough, then by Lemma \ref{dim of kernel u}, we have
\begin{equation*}
{\rm dim}_{\mathbb{H}}{\rm ker}(\slashed{D}_{1,0}^{\pi\circ u_t})=1
\end{equation*}
and there exists $\Lambda=\frac12\Lambda(u_0)$ such that
\begin{equation*}
\#\{{\rm spec}(\slashed{D}_{1,0}^{\pi\circ u_t})\cap[-\Lambda,\Lambda]\}=1
\end{equation*}
for any $u\in B_R^T$ and $t\in[0,T]$, where $\Lambda(u_0)$ is a constant such that ${\rm spec}(\slashed{D}_{1,0}^{u_0})\setminus\{0\}\subset\mathbb{R}\setminus[-\Lambda(u_0),\Lambda(u_0)]$. Furthermore, for $\psi_0\in{\rm ker}(\slashed{D}_{1,0}^{u_0})$ with $\|\psi_0\|_{L^2}=1$, Lemma \ref{projection to kernel} implies that
\begin{equation*}
\sqrt{\frac34}\leq\|\tilde\psi_1^{u_t}\|_{L^2}\leq1
\end{equation*}
for any $u\in B_{R_1}^{T}$ and $t\in[0,T]$, where $\tilde\psi^{u_t}=P^{u_0,u_t}\psi=\tilde\psi^{u_t}_1+\tilde\psi^{u_t}_2$ with respect to the decomposition $\Gamma_{L^2}={\rm ker}(\slashed{D}_{1,0}^{\pi\circ u_t})\oplus({\rm ker}(\slashed{D}_{1,0}^{\pi\circ u_t}))^\bot$ and $R_1=R_1(R,\epsilon,u_0)>0$.

Now, for any $T>0$ and $\kappa>0$, we define
$$V^T_{\kappa}:=\{v\in C^{1+\mu,\frac{1+\mu}{2}}(M\times[0,T])|\|v\|_{C^{1+\mu,\frac{1+\mu}{2}}}\leq\kappa, \ v|_{M\times\{0\}}=0\}.$$
Then, there exists $\kappa_{R_1}:=\kappa(R_1)>0$ such that
\begin{equation*}
u_0+v\in B^{T}_{R_1}, \ \forall v\in V_{\kappa}^T, \ \forall\kappa\leq\kappa_{R_1}.
\end{equation*}
Now, we denote $\kappa_0:=\kappa_{R_1}$ and $V^T:= V^T_{\kappa_0}$.

For every $v\in V^T$, $u_0+v\in B^{T}_{R_1}$, Lemma \ref{lip} gives us a solution $\psi(v+u_0)$ to the  constraint equation. Since $v+u_0\in C^{1+\mu}(M)$, by $L^p$ regularity and Schauder estimates in  \cite{chen2017estimates}, we have
\begin{equation}\label{C1+mu}
\|\psi(v+u_0)\|_{C^{1+\mu}(M)}\leq C(\mu, M, N, \kappa_0, \|u_0\|_{C^{1+\mu}(M)}).
\end{equation}

For any $0<t,s<T$, we also have
\begin{equation*}
\begin{split}
&\quad\slashed\partial(\psi(v+u_0)(t)-\psi(v+u_0)(s))\\
&=-\Gamma(\pi\circ(v+u_0)(t))\#\nabla(\pi\circ(v+u_0)(t))\#\psi(v+u_0)(t)\\
&\quad+\Gamma(\pi\circ(v+u_0)(s))\#\nabla(\pi\circ(v+u_0)(s))\#\psi(v+u_0)(s)\\
&=-\Gamma(\pi\circ(v+u_0)(t))\#\nabla(\pi\circ(v+u_0)(t))\#(\psi^v(t)-\psi(v+u_0)(s))\\
&\quad-\Gamma(\pi\circ(v+u_0)(t))\#(\nabla(\pi\circ(v+u_0)(t))-\nabla(\pi\circ(v+u_0)(s)))\#\psi(v+u_0)(t)\\
&\quad-(\Gamma(\pi\circ(v+u_0)(t))-\Gamma(\pi\circ(v+u_0)(s)))\#\nabla(\pi\circ(v+u_0)(s))\#\psi(v+u_0)(s),
\end{split}\end{equation*}
that is,
\begin{equation*}
\begin{split}
&\quad\slashed{D}^{\pi\circ v(t)}(\psi(v+u_0)(t)-\psi(v+u_0)(s))\\
&=-\Gamma(\pi\circ(v+u_0)(t))\#(\nabla(\pi\circ(v+u_0)(t))-\nabla(\pi\circ(v+u_0)(s)))\#\psi(v+u_0)(t)\\
&\quad-(\Gamma(\pi\circ(v+u_0)(t))-\Gamma(\pi\circ(v+u_0)(s)))\#\nabla(\pi\circ(v+u_0)(s))\#\psi(v+u_0)(s),
\end{split}\end{equation*}
where $\#$ denotes a multi-linear map with smooth coefficients. For any $\lambda\in(0,1)$, by the Sobolev embedding, $L^p$ regularity and Lemma \ref{lip}, we have
\begin{equation*}
\begin{split}
&\quad\|\psi(v+u_0)(t)-\psi(v+u_0)(s)\|_{C^\lambda(M)}\\
&\leq C(\lambda, M, N, \kappa_0, \|u_0\|_{C^1(M)})(\|v(t)-v(s)\|_{L^\infty(M)}+\|dv(t)-dv(s\|_{L^\infty}))\\
&\leq C(\lambda, M, N, \kappa_0, \|u_0\|_{C^1(M)})|t-s|^{\mu/2}.
\end{split}\end{equation*}

Therefore,
\begin{equation}\label{Cmu}
\|\psi(v+u_0)\|_{C^{\mu,\mu/2}(M)}\leq C(\mu, M, N, \kappa_0, \|u_0\|_{C^1(M)}).
\end{equation}

Now, when $\alpha-1$ is sufficiently small, for the $(v,\psi^v)$ above, the standard theory of linear parabolic systems (see \cite{schlag1996schauder}) implies that there exists a unique solution $v_1\in C^{2+\mu,1+\mu/2}(M\times[0,T], \mathbb{R}^q)$ to the following Dirichlet  problem:
		\begin{equation}\label{D1}
		\begin{split}
		\partial_t w^A&=\Delta_g w^A+2(\alpha-1)\frac{\nabla^2_{\beta\gamma}w^B\nabla_{\beta}(v+u_0)^B\nabla_{\gamma}(v+u_0)^A}{1+|\nabla(v+u_0)|^2}  \\
		&\quad+\pi^A_{BC}(v+u_0)\langle\nabla{(v+u_0)^B},\nabla{(v+u_0)^C}\rangle \\
		&\quad+\frac{(\pi^A_B\pi^C_{BD}\pi^C_{EF})(v+u_0)\langle\psi^D(v+u_0),\nabla{(v+u_0)}^E\cdot\psi^F(v+u_0)\rangle}{\alpha(1+|\nabla{(v+u_0)}|^2)^{\alpha-1}}, \\
		&\quad+\Delta_gu_0^A+2(\alpha-1)\frac{\nabla^2_{\beta\gamma}u_0^B\nabla_{\beta}(v+u_0)^B\nabla_{\gamma}(v+u_0)^A}{1+|\nabla(v+u_0)|^2},
		\end{split}
		\end{equation}
	\begin{equation}\label{D2}
		w(\cdot,0)=0,	
	\end{equation}
satisfying
\begin{equation*}
\|v_1\|_{C^{2+\mu,1+\mu/2}(M\times[0,T])}\leq C(\mu,M,N)(\|v_1\|_{C^0(M\times[0,T])}+\|u_0\|_{C^{2+\nu}(M)}+\kappa_0).
\end{equation*}
Since $v_1(\cdot,0)=0$, we have
\begin{equation*}
\|v_1\|_{C^0(M\times[0,T])}\leq C(\mu,M,N)T(\|v_1\|_{C^0(M\times[0,T])}+\|u_0\|_{C^{2+\nu}(M)}+\kappa_0).
\end{equation*}
By taking $T>0$ small enough, we get
\begin{equation*}
\|v_1\|_{C^0(M\times[0,T])}\leq C(\mu,M,N)T(\|u_0\|_{C^{2+\nu}(M)}+\kappa_0).
\end{equation*}
Then the interpolation inequality in \cite{lieberman1996second} implies that $v_1\in V^T$ for $T>0$ sufficiently small. For such $v_1$, we have $\psi(v_1+u_0)$ satisfying \eqref{C1+mu} and \eqref{Cmu}. Replacing $(v,\psi(v+u_0))$ in \eqref{D1}-\eqref{D2} by $(v_1,\psi(v_1+u_0))$, then we get $v_2\in V^T$.
Iterating this procedure, we get a solution $v_{k+1}$ of \eqref{D1}-\eqref{D2} with $(v,\psi(v+u_0))$ replacing by $(v_k,\psi(v_k+u_0))$, which satisfies
\begin{equation*}
\|\psi(v_{k+1}+u_0)\|_{C^{\mu,\mu/2}(M)}\leq C(\mu, M, N, \kappa_0, \|u_0\|_{C^1(M)}).
\end{equation*}
and
\begin{equation*}
\|v_{k+1}\|_{C^{2+\mu,1+\mu/2}(M\times[0,T])}\leq C(\mu,M,N)(\|u_0\|_{C^{2+\nu}(M)}+\kappa_0).
\end{equation*}
By passing to a subsequence, we know that $v_k$ converges to some $u$ in $C^{2,1}(M\times[0,T])$ and $\psi^{v_k+u_0}$ converges to some $\psi$ in $C^0(M\times[0,T])$. Then it is easy to see that $(u,\psi)$ is a solution of \eqref{system in Euclidean1}-\eqref{system in Euclidean2} with $u(\cdot,0)=u_0$ and $\psi(\cdot,0)=\psi_0$.

{\bf Step 2:} $u(x,t)$ takes value in $N$ for any $(x,t)\in M\times[0,T]$.

Suppose $u\in C^{2,1}(M\times[0,T], \mathbb{R}^q)$ and  $\psi\in C^{\mu,\mu/2}(M\times[0,T], \Sigma M\otimes(\pi\circ u)^*TN)\cap L^\infty([0,T];C^{1+\mu}(M))$ satisfy \eqref{system in Euclidean1}-\eqref{system in Euclidean2}. In the following, we write $||\cdot||$ and $\langle\cdot,\cdot\rangle$ for the Euclidean norm and scalar product, respectively. Similarly, we write $||\cdot||_g$ and $\langle\cdot,\cdot\rangle_g$ for the norm and inner product of $(M,g)$, respectively. We define
\begin{equation*}
\rho:\mathbb{R}^q\to\mathbb{R}^q
\end{equation*}
by $\rho(z)=z-\pi(z)$ and
 \begin{equation*}
\varphi: M\times[0,T]\to\mathbb{R}
\end{equation*}
by $\varphi(x,t)=||\rho(u(x,t))||^2=\sum\limits_{A=1}^q|\rho^A(u(x,t))|^2$. A direct computation yields
\begin{equation*}
\begin{split}
(\frac{\partial}{\partial{t}}-\Delta)\varphi(x,t)&=-2\sum\limits_{A=1}^q||\nabla(\rho^A\circ u)(x,t)||_g^2\\
&\quad+2\langle\rho\circ u,-\pi^A_B(u)F^B_1(u)\rangle\\
&\quad+\frac{2}{\alpha(1+|\nabla{u}|^2)^{\alpha-1}}\langle\rho\circ u,\rho^A_B(u)F^B_2(u,\psi)\rangle\\
&\quad+\frac{4(\alpha-1)}{1+|\nabla{u}|^2}\langle\rho\circ u, \nabla^2_{\beta\gamma}u^C\nabla_{\beta}u^C\nabla_{\gamma}u^B\rho_B^A(u)\rangle,
\end{split}\end{equation*}
where $F_1^A$ and $F_2^A$ are defined in \eqref{F1A} and \eqref{F2A}, respectively.

Since $\rho\circ u\in T^\perp_{\pi\circ{u}}N$ and $(d\pi)_{u}: \mathbb{R}^q\to T_{\pi\circ{u}}N$, we have
\begin{equation*}
\langle\rho\circ u,-\pi^A_B(u)F^B_1\rangle=\langle\rho\circ u,\rho^A_B(u)F^B_2\rangle=0.
\end{equation*}
Together with
\begin{equation*}\begin{split}
&\frac{4(\alpha-1)}{1+|\nabla{u}|^2}\langle\rho\circ u, \nabla^2_{\beta\gamma}u^C\nabla_{\beta}u^C\nabla_{\gamma}u^B\rho_B^A(u)\rangle\\
&\leq 4(\alpha-1)||u||_{C^2(M)}||\rho\circ{u}||||\nabla(\rho\circ{u})||\\
&\leq 2(\alpha-1)(||u||^2_{C^2(M)}\varphi+||\nabla(\rho\circ{u})||^2),
\end{split}\end{equation*}
we get
$
(\frac{\partial}{\partial{t}}-\Delta)\varphi(x,t)\leq C\varphi,
$
where $C=C(\|u\|_{C^{2,1}(M\times[0,T])})$. Since $\varphi(x,t)\geq0$ and $\varphi(x,0)=0$ for any $(x,t)\in M\times[0,T]$, we conclude $\varphi=0$ on $M\times[0,T]$. We have shown that $u(x,t)\in N$ for all $(x,t)\in M\times[0,T]$.

Finally, by using the $\epsilon$-regularity (see Lemma \ref{small energy regularity} below), we conclude that
\begin{equation*}
u\in C^{2+\mu,1+\mu/2}(M\times[0,T], N)
\end{equation*} and
\begin{equation*}
\psi\in C^{\mu,\mu/2}(M\times[0,T], \Sigma M\otimes u^*TN)\cap L^\infty([0,T];C^{1+\mu}(M)).
\end{equation*}
\end{proof}

\subsection{Regularity of the flow}

In this subsection, we will give some estimates for the regularity of the flow. The proofs can be found in \cite{jost2019short} and the references therein. Let us start with the following estimate of the energy of the map part.

\begin{lem}\label{map energy}
Suppose $(u,\psi)$ is a solution of \eqref{alpha map}-\eqref{alpha dirac} with the initial values \eqref{initial value}. Then there holds
\begin{equation*}
E_\alpha(u(t))+2\alpha\int_0^t\int_M(1+|\nabla{u}|^2)^{\alpha-1}|\partial_{t}u|^2=E_\alpha(u_0).
\end{equation*}
Moreover, $E_\alpha(u(t))$ is absolutely continuous on $[0,T]$ and non-increasing.
\end{lem}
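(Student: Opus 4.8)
The plan is to differentiate the $\alpha$-energy $E_\alpha(u(t))$ along the flow and use the evolution equation \eqref{alpha map} together with the Dirac constraint \eqref{alpha dirac} to produce the asserted identity. First I would compute, using the first variation of $E_\alpha$ and the fact that $u$ solves \eqref{alpha map},
\begin{equation*}
\frac{d}{dt}E_\alpha(u(t))=\alpha\int_M(1+|\nabla u|^2)^{\alpha-1}\langle\nabla u,\nabla\partial_t u\rangle=-\alpha\int_M\left\langle\tau^\alpha(u),\partial_t u\right\rangle=-\alpha\int_M(1+|\nabla u|^2)^{\alpha-1}\left\langle\tau^\alpha(u)-\tfrac1\alpha\mathcal R(u,\psi),\partial_t u\right\rangle-\int_M\langle\mathcal R(u,\psi),\partial_t u\rangle,
\end{equation*}
where $\tau^\alpha(u)$ is understood as the negative $L^2$-gradient density of $E_\alpha$ (so $\frac{d}{dt}E_\alpha=-\alpha\int_M(1+|\nabla u|^2)^{\alpha-1}|\partial_t u|^2+\langle\mathcal R(u,\psi),\partial_t u\rangle$ after substituting $(1+|\nabla u|^2)^{\alpha-1}\partial_t u=\tau^\alpha(u)-\tfrac1\alpha\mathcal R(u,\psi)$). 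I would then rewrite $\int_M\langle\mathcal R(u,\psi),\partial_t u\rangle$ using the definition $\mathcal R(u,\psi)=\tfrac12 R^m_{lij}\langle\psi^i,\nabla u^l\cdot\psi^j\rangle\partial_{y^m}$ and show this term is the time-derivative of the spinor part of the action, which vanishes because $\psi_t$ stays in the kernel of $\slashed D^{u_t}$.

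The key algebraic input is the well-known identity (going back to \cite{chen2006dirac}) that along a family $(u_t,\psi_t)$ with $\slashed D^{u_t}\psi_t=0$ and $\|\psi_t\|_{L^2}$ constrained, the contribution $\int_M\langle\mathcal R(u,\psi),\partial_t u\rangle$ equals $\frac{d}{dt}\big(\tfrac12\int_M\langle\psi_t,\slashed D^{u_t}\psi_t\rangle\big)=0$; more precisely, differentiating $\int_M\langle\psi_t,\slashed D^{u_t}\psi_t\rangle$ in $t$, the terms hitting $\psi_t$ cancel by self-adjointness of $\slashed D^{u_t}$ and the constraint $\slashed D^{u_t}\psi_t=0$, leaving exactly the term where $\partial_t$ hits the $u$-dependence of $\slashed D^{u_t}$, which by the computation in \cite{chen2006dirac} reproduces $2\int_M\langle\mathcal R(u,\psi),\partial_t u\rangle$. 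Hence $\int_M\langle\mathcal R(u,\psi),\partial_t u\rangle=0$ and
\begin{equation*}
\frac{d}{dt}E_\alpha(u(t))=-2\alpha\int_M(1+|\nabla u|^2)^{\alpha-1}|\partial_t u|^2.
\end{equation*}
Integrating from $0$ to $t$ gives the stated identity. Since the right-hand side is $\le 0$, $E_\alpha(u(t))$ is non-increasing; absolute continuity follows because $t\mapsto\int_M(1+|\nabla u|^2)^{\alpha-1}|\partial_t u|^2$ is integrable on $[0,T]$ by the regularity $u\in C^{2+\mu,1+\mu/2}$, $\psi\in L^\infty([0,T];C^{1+\mu})$ established in Theorem \ref{short-time alpha flow}.

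The main technical obstacle is justifying the differentiation under the integral sign and, in particular, handling the time-dependence of the bundle $\Sigma M\otimes u_t^*TN$ on which $\psi_t$ lives: $\psi_t$ and $\slashed D^{u_t}$ act on moving bundles, so to make sense of $\frac{d}{dt}\int_M\langle\psi_t,\slashed D^{u_t}\psi_t\rangle$ one should pull everything back via the parallel transport $P^{u_0,u_t}$ (as set up in Section 2 and the Appendix) to a fixed bundle, differentiate there, and check that the transport terms contribute only to the $\mathcal R$-type term. The regularity available ($\psi\in C^{\mu,\mu/2}\cap L^\infty_t C^{1+\mu}$, $u\in C^{2+\mu,1+\mu/2}$) is exactly enough to make $t\mapsto\psi_t$ weakly differentiable and the pairings continuous, so the argument goes through; alternatively one can cite \cite{jost2019short} where the analogous energy identity for the $\alpha$-Dirac-harmonic map flow is established, and simply note that the computation is unchanged here. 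Once the differentiation is justified, the remaining steps are the routine first-variation calculations recalled above.
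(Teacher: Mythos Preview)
Your approach is correct and coincides with the paper's: the paper does not prove this lemma itself but refers to \cite{jost2019short}, where exactly the computation you sketch (differentiate $E_\alpha$ along the flow, substitute the evolution equation, and kill the cross term $\int_M\langle\mathcal R(u,\psi),\partial_t u\rangle$ by recognizing it as the $u$-variation of $\tfrac12\int_M\langle\psi,\slashed D^u\psi\rangle$ and invoking $\slashed D^{u_t}\psi_t=0$) is carried out. One cosmetic slip: your long displayed chain is algebraically garbled---from $-\alpha\int_M\langle\tau^\alpha(u),\partial_t u\rangle$ you should substitute $\tau^\alpha(u)=(1+|\nabla u|^2)^{\alpha-1}\partial_t u+\tfrac1\alpha\mathcal R(u,\psi)$ directly, giving $-\alpha\int_M(1+|\nabla u|^2)^{\alpha-1}|\partial_t u|^2-\int_M\langle\mathcal R(u,\psi),\partial_t u\rangle$; the intermediate expression you wrote, with an extra weight $(1+|\nabla u|^2)^{\alpha-1}$ multiplying $\tau^\alpha(u)-\tfrac1\alpha\mathcal R$, does not follow. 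This does not affect the argument.
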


Consequently, we can also control the spinor part along the heat flow of the $\alpha$-Dirac-harmonic map.
\begin{lem}
Suppose $(u,\psi)$ is a solution of \eqref{alpha map}-\eqref{alpha dirac} with the initial values \eqref{initial value}. Then for any $p\in(1,2)$, there holds
\begin{equation*}
||\psi(\cdot,t)||_{W^{1,p}(M)}\leq C, \ \forall t\in[0,T],
\end{equation*}
where $C=C(p,M,N,E_\alpha(u_0))$.
\end{lem}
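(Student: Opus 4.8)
The plan is to combine the Dirac equation for $\psi$ with the uniform energy bound of Lemma~\ref{map energy} and the normalization $\|\psi(\cdot,t)\|_{L^2}=1$ from \eqref{initial value}, and then to invoke an $L^1$-forcing elliptic estimate for the Dirac operator on the closed surface $M$, which is available precisely in the range $p\in(1,2)$. First I would record the pointwise consequence of the constraint equation: by Lemma~\ref{alpha EL} (Euclidean form) one has $\slashed\partial\psi^A=\pi^A_{BC}(\pi\circ u)\,\nabla(\pi\circ u)^B\cdot\psi^C$, and since $N$ is closed and $u(\cdot,t)$ stays in the tubular neighborhood $N_\delta$, the coefficients $\pi^A_{BC}$ are bounded by a constant $C=C(N)$. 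Hence $|\slashed\partial\psi|\le C(N)\,|\nabla u|\,|\psi|$ pointwise on $M$ for every $t\in[0,T]$.

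Next I would extract the two ingredients for Hölder's inequality. From Lemma~\ref{map energy}, $E_\alpha(u(t))\le E_\alpha(u_0)$; since $(1+s)^\alpha\ge 1+s$ for $s\ge0$ and $\alpha\ge1$, this gives $\int_M|\nabla u(\cdot,t)|^2\le 2E_\alpha(u_0)$, so $\|\nabla u(\cdot,t)\|_{L^2(M)}\le\sqrt{2E_\alpha(u_0)}$ for all $t\in[0,T]$. Combining this with $\|\psi(\cdot,t)\|_{L^2(M)}=1$ and the pointwise bound above, Hölder's inequality yields
\[
\|\slashed\partial\psi(\cdot,t)\|_{L^1(M)}\le C(N)\,\|\nabla u(\cdot,t)\|_{L^2(M)}\,\|\psi(\cdot,t)\|_{L^2(M)}\le C(N)\sqrt{2E_\alpha(u_0)}.
\]

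Finally I would apply elliptic regularity for the Dirac operator $\slashed\partial$ on the closed surface $M$. Splitting $\psi=(I-\Pi)\psi+\Pi\psi$ with $\Pi$ the $L^2$-projection onto $\ker\slashed\partial$, one has $(I-\Pi)\psi=Q(\slashed\partial\psi)$ where $Q$ is the Green operator of $\slashed\partial$; in dimension two the kernel of $\nabla Q$ is of Calder\'on--Zygmund type of order $-1$, so $\nabla Q$ maps $L^1(M)$ into $L^{2,\infty}(M)\subset L^p(M)$ for every $p<2$. This gives, for each fixed $p\in(1,2)$,
\[
\|\psi(\cdot,t)\|_{W^{1,p}(M)}\le C(p,M)\big(\|\slashed\partial\psi(\cdot,t)\|_{L^1(M)}+\|\psi(\cdot,t)\|_{L^2(M)}\big),
\]
and together with the previous step we conclude $\|\psi(\cdot,t)\|_{W^{1,p}(M)}\le C(p,M,N,E_\alpha(u_0))$, uniformly in $t\in[0,T]$. (The same conclusion appears in \cite{jost2019short} and can be quoted directly.)

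The main obstacle is the last step: the estimate with $L^1$ data on the right-hand side is genuinely an endpoint statement and fails at $p=2$, which is exactly why the lemma is restricted to $p\in(1,2)$. One must use the weak-type mapping property of the relevant singular integral (or, equivalently, the two-dimensional embedding $W^{1,1}\hookrightarrow L^{2,\infty}$ together with Kato's inequality $|\nabla|\psi||\le|\nabla\psi|$), rather than the usual $L^p$--$L^p$ Calder\'on--Zygmund/Schauder theory. Everything else is a routine combination of Lemma~\ref{alpha EL}, Lemma~\ref{map energy}, Hölder's inequality and the normalization \eqref{initial value}, with the dependence of the constant on $N$ entering only through the uniform bound on the projection coefficients $\pi^A_{BC}$.
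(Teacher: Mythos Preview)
Your proposal is correct and is precisely the standard argument (pointwise bound $|\slashed\partial\psi|\le C(N)|\nabla u||\psi|$, then H\"older with the $L^2$ energy bound and $\|\psi\|_{L^2}=1$, then the endpoint $L^1\to W^{1,p}$ elliptic estimate for $\slashed\partial$ on a closed surface); the paper does not spell out a proof but simply refers to \cite{jost2019short}, where this same argument is carried out. Your remark that the $L^1$ right-hand side forces $p<2$ via the weak-type mapping of the Green operator is exactly the point, and the kernel part $\Pi\psi$ is harmless since $\ker\slashed\partial$ is finite-dimensional.
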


To get the convergence of the flow, we also need the following $\epsilon$-regularity.
\begin{lem}\label{small energy regularity}
Suppose $(u,\psi)$ is a solution of \eqref{alpha map}-\eqref{alpha dirac} with the initial values \eqref{initial value}. Given $\omega_0=(x_0,t_0)\in M\times(0,T]$, denote
\begin{equation*}
P_R(\omega_0):=B_R(x_0)\times[t_0-R^2,t_0].
\end{equation*}
Then there exist three constants $\epsilon_2=\epsilon_2(M,N)>0$, $\epsilon_3=\epsilon_3(M,N,u_0)>0$ and $C=C(\mu,R,M,N,E_\alpha(u_0))>0$ such that if
\begin{equation*}
1<\alpha<1+\epsilon_2, \ \text{and} \sup_{[t_0-4R^2,t_0]}E(u(t);B_{2R}(\omega_0))\leq\epsilon_3,
\end{equation*}
then
\begin{equation*}
\sqrt{R}||\psi||_{L^\infty(P_R(\omega_0))}+R||\nabla{u}||_{L^\infty(P_R(\omega_0))}\leq C
\end{equation*}
and for any $0<\beta<1$,
\begin{equation*}
\sup_{[t_0-\frac{R^2}{4},t_0]}||\psi(t)||_{C^{1+\mu}(B_{R/2}(x_0))}+||\nabla{u}||_{C^{\beta,\beta/2}(P_{R/2}(\omega_0))}\leq C(\beta).
\end{equation*}
Moreover, if
\begin{equation*}
\sup_{M}\sup_{[t_0-4R^2,t_0]}E(u(t);B_{2R}(\omega_0))\leq\epsilon_3,
\end{equation*}
then
\begin{equation*}
||u||_{C^{2+\mu,1+\mu/2}(M\times[t_0-\frac{R^2}{8},t_0])}+||\psi||_{C^{\mu,\mu/2}(M\times[t_0-\frac{R^2}{8},t_0])}+\sup_{[t_0-\frac{R^2}{8},t_0]}||\psi(t)||_{C^{1+\mu}(M)}\leq C.
\end{equation*}
\end{lem}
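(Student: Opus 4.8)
The plan is to establish Lemma~\ref{small energy regularity} as a parabolic $\epsilon$-regularity theorem of Struwe--Sacks--Uhlenbeck type, adapted to the coupled elliptic--parabolic system \eqref{system in Euclidean1}--\eqref{system in Euclidean2} and to the quasilinear $\alpha$-correction; the details parallel those in \cite{jost2019short}. By the parabolic rescaling $(x,t)\mapsto(x_0+Rx,\,t_0+R^2t)$ together with the half-density rescaling $\psi\mapsto R^{1/2}\psi$, the system \eqref{system in Euclidean1}--\eqref{system in Euclidean2} is scale invariant in dimension two, up to a mild dependence on the metric of $M$: the Dirichlet energy is scale invariant, while $\norm{\psi}_{L^\infty}$ and $\norm{\nabla u}_{L^\infty}$ carry precisely the weights $R^{-1/2}$ and $R^{-1}$ recorded in the statement. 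It therefore suffices to treat $R=1$, to bound $\psi$ and $\nabla u$ in $L^\infty(P_1)$ under the hypothesis $\sup_{[t_0-4,t_0]}E(u(t);B_2(x_0))\le\epsilon_3$, and then to bootstrap.

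The preliminary inputs are two. First, a localized version of the energy identity of Lemma~\ref{map energy}, obtained by testing \eqref{alpha map} against $\partial_t u$ with a spatial cutoff, which keeps $\int_{B_2(x_0)}|\nabla u|^2\,dx$ below $C\epsilon_3$ for every $t$ in the cylinder. Second, the uniform bound $\norm{\psi(\cdot,t)}_{W^{1,p}(M)}\le C(p,M,N,E_\alpha(u_0))$ for $p\in(1,2)$ established above, which by Sobolev embedding on the surface yields a uniform bound on $\norm{\psi(\cdot,t)}_{L^r(M)}$ for every $r<\infty$. Crucially, only this $W^{1,p}$-bound for $\psi$ --- not an $L^\infty$-bound --- will be needed for the map estimate, which is what breaks the apparent circularity between the two equations.

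The heart of the argument is the $\epsilon$-regularity estimate for the map. The right-hand side of \eqref{system in Euclidean1} splits into the quadratic-gradient harmonic-map term $\pi^A_{BC}(u)\langle\nabla u^B,\nabla u^C\rangle$, the spinor bilinear term of schematic form $|\psi|^2|\nabla u|$, and the quasilinear $\alpha$-term $2(\alpha-1)(1+|\nabla u|^2)^{-1}\nabla^2_{\beta\gamma}u^B\nabla_\beta u^B\nabla_\gamma u^A$. I would run a Moser iteration for $e:=|\nabla u|^2$ --- equivalently a heat-kernel representation combined with Riesz-potential estimates --- and close it using the smallness of $\int_{B_2(x_0)}e\,dx$. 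Two points require attention: the $\alpha$-term obeys the pointwise bound $2(\alpha-1)|\nabla^2 u|$, so it is absorbed into the principal part once $\alpha-1<\epsilon_2$ is small --- this is where the restriction $1<\alpha<1+\epsilon_2$ is used; and the spinor term has critical scaling, so rather than treating $\psi$ as merely $L^2$ one uses the $W^{1,p}$-bound above together with Gagliardo--Nirenberg on the surface $M$ to render $\int|\psi|^2 e$ subcritical. The iteration then yields $\norm{\nabla u}_{L^\infty(P_1)}\le C$. With $\nabla u\in L^\infty$, the Dirac constraint $\slashed\partial\psi^A=\pi^A_{BC}(u)\nabla u^B\cdot\psi^C$ has $L^\infty$ coefficients, so iterated elliptic $L^p$-estimates for $\slashed\partial$ together with Sobolev embedding give $\norm{\psi}_{L^\infty(P_1)}\le C$, which is the first assertion.

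The rest is standard bootstrapping. The right-hand side of \eqref{system in Euclidean1} now lies in $L^\infty$, so interior parabolic $L^p$-theory gives $u\in W^{2,1}_p(P_{1/2})$ for all $p$, hence $\nabla u\in C^{\beta,\beta/2}(P_{1/2})$ for every $\beta\in(0,1)$; feeding $u(\cdot,t)\in C^{1,\beta}$ back into the Dirac equation and invoking Schauder estimates for $\slashed\partial$ gives $\psi(\cdot,t)\in C^{1+\mu}(B_{1/2})$, while H\"older continuity of $\psi$ in $t$ follows from the $t$-regularity of $u$ by differencing the Dirac equations at times $t$ and $s$, exactly as in Step~1 of the proof of Theorem~\ref{short-time alpha flow}; this gives the second assertion. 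Finally, under the global smallness hypothesis the right-hand side of \eqref{system in Euclidean1} is of class $C^{\mu,\mu/2}$ on all of $M\times[t_0-\tfrac{R^2}{8},t_0]$, so parabolic Schauder estimates upgrade $u$ to $C^{2+\mu,1+\mu/2}$; covering $M$ by finitely many balls and using its compactness makes the constant depend only on $\mu,R,M,N,E_\alpha(u_0)$, which is the third assertion. I expect the genuine obstacle to be the map estimate of the previous paragraph: running the $\epsilon$-regularity iteration for the \emph{quasilinear} $\alpha$-flow while simultaneously taming the borderline spinor term $|\psi|^2|\nabla u|$ forces the joint use of the a priori $W^{1,p}$-bound for $\psi$, the smallness of $\alpha-1$, and the smallness of the local Dirichlet energy; everything else falls into place once these three inputs are combined.
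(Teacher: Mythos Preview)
The paper does not actually prove this lemma: Section~4.2 simply states that ``The proofs can be found in \cite{jost2019short} and the references therein,'' with no further argument given. Your outline --- parabolic $\epsilon$-regularity via Moser iteration on $|\nabla u|^2$, absorption of the quasilinear $\alpha$-term using the smallness of $\alpha-1$, control of the spinor coupling through the a priori $W^{1,p}$-bound on $\psi$, then elliptic estimates for $\slashed\partial$ and a Schauder bootstrap --- is the correct strategy and is precisely what the cited reference carries out, so there is nothing to compare against and no gap to flag.
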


\subsection{Existence of Dirac-harmonic maps}
In this section, we will prove Theorem \ref{flow to DH maps} by the short-time existence of $\alpha$-Dirac-harmonic map flow. First, we will prove the following existence result about the $\alpha$-Dirac-harmonic maps for $\alpha>1$. Then, by the compactness, we get a Dirac-harmonic map as the limit of these $\alpha$-Dirac-harmonic maps. Last, we prove that the bubbles only can be harmonic spheres, and finish the proof of Theorem \ref{flow to DH maps}.

\begin{thm}\label{Existence of alpha DH map}
Let $M$ be a closed spin surface and $(N,h)$ a closed K\"ahler manifold. Suppose there exists a map $u_0\in C^{2+\mu}(M,N)$ for some $\mu\in(0,1)$ such that ${\rm dim}_{\mathbb{H}}{\rm ker}\slashed{D}_{1,0}^{u_0}=1$. Then for any $\alpha\in(1,1+\epsilon_1)$, there exists a nontrivial smooth $\alpha$-Dirac-harmonic map $(u_\alpha,\psi_\alpha)$ such that the map part $u_\alpha$ stays in the same homotopy class as $u_0$ {and $\|\psi_\alpha\|_{L^2}=1$. }
\end{thm}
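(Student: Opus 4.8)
The plan is to run the $\alpha$-Dirac-harmonic map flow from the initial datum $(u_0,\psi_0)$ and extract an $\alpha$-Dirac-harmonic map as a limit as $t\to\infty$. First I would invoke Theorem \ref{short-time alpha flow}: since $N$ is K\"ahler it is in particular a closed Riemannian manifold, so for $\alpha\in(1,1+\epsilon_1)$ we obtain a short-time solution $(u,\psi)$ of \eqref{alpha map}-\eqref{alpha dirac} with the normalization \eqref{initial value}, in particular $\|\psi_t\|_{L^2}=1$ for all $t$. The minimal-kernel condition ${\rm dim}_{\mathbb H}{\rm ker}\slashed D_{1,0}^{u_0}=1$ is $C^1$-open (continuity of eigenvalues, as in Lemma \ref{density}), so it persists along the flow for short time, which is exactly what keeps the Dirac equation \eqref{alpha dirac} uniquely solvable (up to the quaternionic ambiguity) and the flow well-defined. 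The energy identity of Lemma \ref{map energy} gives $E_\alpha(u(t))\le E_\alpha(u_0)$, and together with the $W^{1,p}$-bound on $\psi$ from the subsequent lemma this yields uniform a priori control on any finite time interval.

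Next I would prove that the flow exists for all time and subconverges. The key tool is the $\epsilon$-regularity Lemma \ref{small energy regularity}: because $M$ is a closed surface and the map energy is monotone, at most finitely many points can carry more than $\epsilon_3$ of energy at any time, and away from those the flow enjoys uniform $C^{2+\mu,1+\mu/2}\times C^{\mu,\mu/2}$ bounds. Combined with the energy inequality $2\alpha\int_0^\infty\int_M(1+|\nabla u|^2)^{\alpha-1}|\partial_t u|^2\le E_\alpha(u_0)$, one finds a sequence $t_k\to\infty$ with $\int_M|\partial_t u(t_k)|^2\to0$. One must rule out finite-time blow-up and energy concentration: here the crucial point is that for $\alpha>1$ there is no nonconstant $\alpha$-harmonic sphere (the Sacks-Uhlenbeck gap), so no map-energy bubble can form, and the minimal-kernel condition prevents the kernel from jumping; hence the flow is smooth for all time and the bounds are global. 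Passing to the limit along $t_k$, $(u_\alpha,\psi_\alpha):=\lim(u(t_k),\psi(t_k))$ satisfies $\tau^\alpha(u_\alpha)=\tfrac1\alpha\mathcal R(u_\alpha,\psi_\alpha)$ and $\slashed D^{u_\alpha}\psi_\alpha=0$, with $\|\psi_\alpha\|_{L^2}=1$ by the preserved normalization, so it is a nontrivial $\alpha$-Dirac-harmonic map; smoothness follows from elliptic regularity for the stationary system. Since the flow is continuous in $C^0$ and $M$ is compact, $u_\alpha$ is homotopic to $u_0$.

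The main obstacle I expect is controlling the spinor along the flow and, in particular, keeping ${\rm dim}_{\mathbb H}{\rm ker}\slashed D_{1,0}^{u_t}=1$ globally in time rather than merely locally. A priori the kernel could jump at some finite time, destroying either the well-posedness of \eqref{alpha dirac} or the $L^2$-normalization; handling this requires the spectral-gap/resolvent machinery set up in Section 2 (the curves $\gamma_t$, the projection onto ${\rm ker}\slashed D_{1,0}^{\pi\circ u_t}$, and the estimates of Lemma \ref{projection to kernel}) together with the uniform $C^1$-bound on $u$ coming from $\epsilon$-regularity, so that the operator stays in a $C^1$-neighborhood where the gap is uniform. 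A secondary technical point is that extracting a nontrivial limit spinor requires the lower bound $\|\tilde\psi_1^{u_t}\|_{L^2}\ge\sqrt{3/4}$ type estimate to survive the limit; this is guaranteed by the uniform gap. Once these are in place the rest is a standard Palais-Smale/blow-up argument for a two-dimensional geometric flow.
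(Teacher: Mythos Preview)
Your outline is largely correct up to the point where you try to keep the kernel minimal for all time; that step contains a genuine gap, and the paper's proof follows a different route precisely because the gap cannot be closed in the way you suggest.

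You argue that the resolvent/spectral-gap machinery of the Appendix, combined with the uniform $C^1$-bound on $u_t$ coming from $\epsilon$-regularity, forces the operator $\slashed{D}_{1,0}^{u_t}$ to stay in a $C^1$-neighborhood where the gap is uniform. But Lemmas \ref{dim of kernel u}, \ref{resolvents}, \ref{projection to kernel} only apply for $u_t$ in a \emph{small} ball $B_R^T(\bar u_0)$ around the fixed initial map $u_0$; a uniform $C^1$-bound does not keep $u_t$ close to $u_0$. Nothing in the paper (and nothing in the literature) rules out ${\rm dim}_{\mathbb H}\ker\slashed{D}_{1,0}^{u_t}$ jumping up at some finite time, even though the map part has no energy concentration for $\alpha>1$. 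In fact the paper explicitly identifies the singular set of the flow as
\[
Z=\{T\in\mathbb{R}\mid \lim_{t_i\nearrow T}{\rm dim}_{\mathbb H}\ker\slashed{D}_{1,0}^{u(t_i)}>1\},
\]
and does \emph{not} claim $Z=\emptyset$.

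The paper's actual strategy is a restart-and-iterate argument. Since for $\alpha>1$ one has $\int_{B_{r_0}}|\nabla u|^2\le C E_\alpha(u_0)^{1/\alpha}r_0^{1-1/\alpha}$, energy never concentrates, so at a singular time $T\in Z$ the map still converges in $C^{2+\mu}$ to $u(\cdot,T)$. One then invokes the density Lemma \ref{density} to choose a nearby $u_1\in[u_0]$ with ${\rm dim}_{\mathbb H}\ker\slashed{D}_{1,0}^{u_1}=1$ and strictly smaller energy $m^\alpha_0<E_\alpha(u_1)<E_\alpha(u(\cdot,T))$, picks any unit $\psi_1\in\ker\slashed{D}_{1,0}^{u_1}$, and restarts the flow from $(u_1,\psi_1)$. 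Iterating, either one of the flow pieces has a time sequence with $\int_M|\partial_t u|^2\to 0$ and one passes to an $\alpha$-Dirac-harmonic limit, or the energies of the restart maps accumulate at $m^\alpha_0$ and the maps converge to a minimizing $\alpha$-harmonic map $u_\alpha$. In the latter case $\slashed{D}_{1,0}^{u_\alpha}$ may well have non-minimal kernel; the paper then uses the $\mathbb{Z}_2$-grading argument from the proof of Theorem \ref{inv} (the vanishing of $(\slashed{D}_{1,0}\psi^\pm,\psi^\pm)_{L^2}$) to produce an $\alpha$-Dirac-harmonic map $(u_\alpha,\psi_\alpha^\pm)$ with $\|\psi_\alpha^\pm\|_{L^2}=1$. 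Both the density lemma and this last grading step are essential and are missing from your plan.
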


\begin{proof}[Proof of Theorem \ref{Existence of alpha DH map}]
Let us define
\begin{equation*}
m^\alpha_0:=\inf\{E_\alpha(u)|u\in W^{1,2\alpha}(M,N)\cap[u_0]\},
\end{equation*}
where $[u_0]$ denotes the homotopy class of $u_0$. If $u_0$ is a minimizing $\alpha$-harmonic map, it follows from Lemma \ref{map energy} that $(u_0,\psi_0)$ is an $\alpha$-Dirac-harmonic map for any $\psi_0\in{\rm ker}\slashed{D}_{1,0}^{u_0}$. If $E_\alpha(u_0)>m^\alpha_0$, then Theorem \ref{short-time alpha flow} gives us a solution
\begin{equation*}
u\in C^{2+\mu,1+\mu/2}(M\times[0,T),N)
\end{equation*}
and
\begin{equation*}
\psi\in C^{\mu,\mu/2}(M\times[0,T), \Sigma M\otimes u^*TN)\cap{\cap_{0<s<T}}L^\infty([0,s];C^{1+\mu}(M)).
\end{equation*}
to the problem \eqref{alpha map}-\eqref{alpha dirac} with the initial values \eqref{initial value}.

 By Lemma \ref{map energy}, we know
\begin{equation*}
\int_M(1+|\nabla{u}|^2)^\alpha\leq E_\alpha(u_0).
\end{equation*}
Then it is easy to see that, for any $0<\epsilon<\epsilon_3$, there exists a positive constant $r_0=r_0(\epsilon, \alpha, E_\alpha(u_0))$ such that for all $(x,t)\in M\times[0,T)$, there holds
\begin{equation*}
\int_{B_{r_0}(x)}|\nabla{u}|^2\leq CE_\alpha(u_0)^{1/\alpha}r_0^{1-\frac{1}{\alpha}}\leq \epsilon.
\end{equation*}
Therefore, by Theorem \ref{short-time alpha flow} and Lemma \ref{small energy regularity}, we know that the singular time can be characterized as
\begin{equation*}
Z=\{T\in\mathbb{R}|\lim\limits_{t_i\nearrow T}{\rm dim}_{\mathbb{H}}{\rm ker}\slashed{D}_{1,0}^{u(t_i)}>1\}
\end{equation*}
and there exists a sequence $\{t_i\}\nearrow T$ such that
\begin{equation*}
(u(\cdot,t_i),\psi(\cdot,t_i))\to (u(\cdot,T),\psi(\cdot,T)) \ \text{in} \ C^{2+\mu}(M)\times C^{1+\mu/2}(M)
\end{equation*}
{and
\begin{equation*}
\|\psi(\cdot,T)\|_{L^2}=1.
\end{equation*}
}

If $Z=\emptyset$, then, by Theorem \ref{short-time alpha flow}, we can extend the solution $(u,\psi)$ beyond the time $T$ by using $(u(\cdot,T),\psi(\cdot,T))$ as new initial values. Thus, we have the global existence of the flow. For the limit behavior as $t\to\infty$,  Lemma \ref{map energy} implies that there exists a sequence $\{t_i\}\to\infty$ such that
\begin{equation}\label{time derivative}
\int_M|\partial_tu|^2(\cdot,t_i)\to 0.
\end{equation}
Together with  Lemma \ref{small energy regularity}, there is a subsequence, still denoted by $\{t_i\}$, and an $\alpha$-Dirac-harmonic map $(u_\alpha, \psi_\alpha)\in C^\infty(M,N)\times C^\infty(M,\Sigma M\otimes(u_\alpha)^*TN)$ such that $(u(\cdot,t_i),\psi(\cdot,t_i))$ converges to $(u_\alpha, \psi_\alpha)$ in $C^2(M)\times C^1(M)$ {and $\|\psi_\alpha\|_{L^2}=1$.}

If $Z\neq\emptyset$ and $T\in{Z}$, let us assume that $E_\alpha(u(\cdot,T))>m^\alpha_0$ and $(u(\cdot,T),\psi(\cdot,T))$ is not already an $\alpha$-Dirac-harmonic map.  We extend the flow as follows: By Lemma \ref{density}, there is a map $u_1\in C^{2+\mu}(M,N)$ such that
 \begin{equation}\label{lower energy}
 m^\alpha_0<E_\alpha(u_1)<E_\alpha(u(\cdot,T))
 \end{equation}
 and
 \begin{equation}\label{minimal kernel}
 {\rm dim}_{\mathbb{H}}{\rm ker}\slashed{D}_{1,0}^{u_1}=1.
 \end{equation}
Thus, picking any $\psi_1\in{\rm ker}\slashed{D}^{u_1}$ with $\|\psi_1\|_{L^2}=1$, we can restart the flow from the new initial values $(u_1,\psi_1)$. If there is no singular time along the flow started from $(u_1,\psi_1)$, then we get an $\alpha$-Dirac-harmonic map as in the case of $Z=\emptyset$. Otherwise, we use again the procedure above to choose $(u_2,\psi_2)$ as  initial values and restart the flow.  This procedure will stop in  finitely or infinitely many steps.

 If infinitely many steps are required, then there exist infinitely many  flow pieces  $\{u_i(x,t)\}_{i=1,\dots,\infty}$ and $\{T_i\}_{i=1,\dots,\infty}$ such that
\begin{equation*}
E_\alpha(u_i(t))+2\alpha\int_0^t\int_M(1+|\nabla{u}_i|^2)^{\alpha-1}|\partial_{t}u_i|^2=E_\alpha(u_i), \ \forall t\in(0,T_i),
\end{equation*}
where $u_i(\cdot,0)=u_i\in C^{2+\mu}(M,N)$.  If the $T_i$ are bounded away from zero, then there is $\{t_i\}$ such that \eqref{time derivative} holds for $t_i\in(0,T_i)$. Therefore, we have an $\alpha$-Dirac-harmonic map as before. If $T_i\to 0$, then we look at the limit of $E_\alpha(u_i)$. If the limit is strictly bigger than $m^\alpha_0$, we again choose another map satisfying \eqref{lower energy} and \eqref{minimal kernel} as a new starting point. If the limit is exactly $m^\alpha_0$, then we choose $\{t_i\}$ such that $t_i\in(0,T_i)$ for each $i$. By Lemma \ref{small energy regularity},  $u_i(t_i)$ converges in $C^2(M)\times C^1(M)$ to a minimizing $\alpha$-harmonic map $u_\alpha$. If $\slashed{D}_{1,0}^{u_\alpha}$ has minimal kernel, then for any $\psi\in{\rm ker}{\slashed{D}_{1,0}^{u_\alpha}}$, $(u_\alpha,\psi)$ is an $\alpha$-Dirac-harmonic map as we showed in the beginning of the proof. If $\slashed{D}_{1,0}^{u_\alpha}$ has non-minimal kernel, by using the $\mathbb{Z}_2$-grading $G\otimes id$ as in the proof of Theorem \ref{inv}, we get  $\alpha$-Dirac-harmonic maps $(u_\alpha,\psi_\alpha^\pm)$ for any ${\rm ker}{\slashed{D}^{u_\alpha}}\ni\psi_\alpha=\psi_\alpha^++\psi_\alpha^-$. In particular, we can choose $\psi_\alpha$ such that $\|\psi_\alpha^+\|_{L^2}=1$ or $\|\psi_\alpha^-\|_{L^2}=1$. By this procedure, we either get an $\alpha$-Dirac-harmonic map or keep on  choosing new maps satisfying \eqref{lower energy} and \eqref{minimal kernel}.  In the latter case, since the energies of the initial maps are bounded and decreasing, they converge to the minimizing energy $m^\alpha_0$. (Otherwise, suppose the constant is $C>m^\alpha_0$. Then one can choose a new map with a lower energy such that the limit is not $C$.) Therefore, we also get an $\alpha$-Dirac-harmonic map in the latter case as before.

If it stops in finitely many steps, there exists a sequence $\{t_i\}$ and some $0<T_k\leq+\infty$ such that
\begin{equation*}
\lim\limits_{t_i\nearrow T}(u(\cdot,t_i),\psi(\cdot,t_i))\to (u_\alpha,\psi_\alpha) \ \text{in} \ C^2(M)\times C^1(M),
\end{equation*}
where $(u_\alpha,\psi_\alpha)$ either is an $\alpha$-Dirac-harmonic map or satisfies $E_\alpha(u_\alpha)=m^\alpha_0$. And in the latter case, $u_\alpha$ is a minimizing $\alpha$-harmonic map. Then we can again get a nontrivial $\alpha$-Dirac-harmonic map as above.

\end{proof}

By Theorem \ref{Existence of alpha DH map}, for any $\alpha>1$ sufficiently close to $1$, there exists an $\alpha$-Dirac-harmonic map $(u_\alpha,\psi_\alpha)$ with the properties
\begin{equation}\label{map energy bound}
E_\alpha(u_\alpha)\leq E_\alpha(u_0), \  \ \|\psi_\alpha\|_{L^2}=1
\end{equation}
and
\begin{equation}\label{Lp of spinor}
||\psi_\alpha||_{W^{1,p}(M)}\leq C(p,M,N,E_\alpha(u_0))
\end{equation}
for any $1<p<2$. Then it is natural to consider the limit behavior when $\alpha$ decreases to $1$. Together with the blow-up analysis in \cite{jost2018geometric}, we have the following existence result.

\begin{thm}\label{flow to DH maps}
Let $M$ be a closed Riemann surface and $N$ a complex $n$-dimensional analytic K\"ahler manifold  and a parallel real structure be $j_2$ defined in Theorem \ref{inv}. Suppose there exists a map $u_0\in C^{2+\mu}(M,N)$ for some $\mu\in(0,1)$ such that ${\rm dim}_{\mathbb{H}}{\rm ker}\slashed{D}_{1,0}^{u_0}=1$. Then there exists a nontrivial (i.e. $\Psi\neq0$) smooth Dirac-harmonic map $(\Phi,\Psi)$ with
$\|\Psi\|_{L^2}=1$.
 In particular, if $N$ has nonpositive curvature, then the map $\Phi$ stays in the same homotopy class as $u_0$. 
 \end{thm}

\begin{proof}
By Theorem \ref{Existence of alpha DH map}, we have a sequence of smooth $\alpha$-Dirac-harmonic maps $(u_{\alpha_k},\psi_{\alpha_k})$ with \eqref{map energy bound} and \eqref{Lp of spinor}, where $\alpha_k\searrow1$ as $k\to\infty$. Then, by the compactness theorem in \cite{jost2018geometric}, there is a constant $\epsilon_0>0$ and a Dirac-harmonic map
$$
(\Phi,\Psi)\in C^\infty(M,N)\times C^\infty(M,\Sigma M\otimes\Phi^*TN)
$$
such that
\begin{equation*}
(u_{\alpha_k},\psi_{\alpha_k})\to(\Phi,\Psi) \ \text{in} \  C^2_{loc}(M\setminus{\mathcal S})\times C^1_{loc}(M\setminus{\mathcal S}),
\end{equation*}
where
\begin{equation*}
\mathcal{S}:=\{x\in M|\liminf_{\alpha_k\to1}E(u_{\alpha_k};B_{r}(x))\geq\frac{\epsilon_0}{2}, \forall r>0 \}
\end{equation*}
is a finite set.

Now, taking $x_0\in\mathcal{S}$, there exists a sequence $x_{\alpha_k}\to x_0$, $\lambda_{\alpha_k}\to0$ and a nontrivial Dirac-harmonic map $(\phi,\xi): \mathbb{R}^2\to N$ such that
\begin{equation*}
(u_{\alpha_k}(x_{\alpha_k}+\lambda_{\alpha_k}x),\lambda_{\alpha_k}^{{\alpha_k}-1}\sqrt{\lambda_{\alpha_k}}\psi_{\alpha_k}(x_{\alpha_k}+\lambda_{\alpha_k}x))\to(\phi,\xi) \ \text{in} \ C^2_{loc}(\mathbb{R}^2),
\end{equation*}
as $\alpha\to1$. Choose any $p^*>4$, by taking $p=\frac{2p^*}{2+p^*}$ in \eqref{Lp of spinor}, we get
\begin{equation*}
||\psi_{\alpha_k}||_{L^{p^*}(M)}\leq C(p^*,M,N,E^{\alpha_k}(u_0))
\end{equation*}
and
\begin{equation*}
\begin{split}
||\xi||_{L^4(D_R(0))}&=\lim\limits_{{\alpha_k}\to1}\lambda^{{\alpha_k}-1}_{\alpha_k}||\psi_{\alpha_k}||_{L^4(D_{\lambda_{\alpha_k}{R}}(x_{\alpha_k}))}\\
&\leq\lim\limits_{{\alpha_k}\to1}C||\psi_{\alpha_k}||_{L^{p^*}(M)}(\lambda_{\alpha_k}{R})^{2(\frac14-\frac{1}{p^*})}=0.
\end{split}
\end{equation*}
Thus, $\xi=0$ and $\phi$ can be extended to a nontrivial smooth harmonic sphere. Since $||\psi_\alpha||_{L^2}=1$, the Sobolev embedding implies that $||\Psi||_{L^2(M)}=\lim\limits_{{\alpha_k}\to1}||\psi_\alpha||_{L^2(M)}=1$. Therefore, $(\Phi,\Psi)$ is nontrivial. Furthermore, if $(N,h)$ does not admit any nontrivial harmonic sphere, then
\begin{equation*}
(u_{\alpha_k},\psi_{\alpha_k})\to(\Phi,\Psi) \ \text{in} \  C^2(M)\times C^1(M).
\end{equation*}
Therefore, $\Phi$ is in the same homotopy class as $u_0$.
\end{proof}

\vspace{4em}
\section{Appendix}
We will use the parallel construction in \cite{jost2019short} to construct the solution to the constraint equation for spinors under a different pull-back bundle $u^*T_{1,0}N$. Since the only thing changed is the bundle we twisted, the proofs of those nice properties are parallel to those in \cite{jost2019short}. For completeness, we give the details in this appendix.

For every $T>0$, we consider the space $B^T_R(\bar{u}_0):=\{u\in X_T|\|u-\bar{u}_0\|_{X_T}\leq R\}\cap\{u|_{t=0}=u_0\}$ where $\bar{u}_0(x,t)=u_0(x)$ for any $t\in[0,T]$. To get the necessary estimate for the solution of the constraint equation, we will use the parallel transport along the unique shortest geodesic between $u_0(x)$ and $\pi\circ{u_t}(x)$ in N. To do this, we need the following lemma which tells us that the distances in $N$ can be locally controlled by the distances in $\mathbb{R}^q$.

\begin{lem}\label{distance control}
Let $N\subset\mathbb{R}^q$ be a closed embedded submanifold of $\mathbb{R}^q$ with the induced Riemannian metric. Denote by $A$ its Weingarten map. Choose $C>0$ such that $||A||\leq C$, where
\begin{equation*}
||A||:=\sup\{||A_vX||| \ v\in T_p^\perp{N}, \ X\in T_pN, \ ||v||=1, \ ||X||=1, \ p\in N\}.
\end{equation*}
Then there exists $0<\delta_0<\frac1C$ such that for all $0<\delta\leq\delta_0$ and for all $p,q\in N$ with $||p-q||_2<\delta$, it holds that
\begin{equation*}
d^N(p,q)\leq\frac{1}{1-\delta{C}}||p-q||_2,
\end{equation*}
where we denote the Euclidean norm by $||\cdot||_2$ in this section.
\end{lem}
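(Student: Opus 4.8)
The plan is to estimate the geodesic distance $d^N(p,q)$ from above by the Euclidean distance $\|p-q\|_2$ by constructing an explicit short curve in $N$ joining $p$ and $q$ and bounding its length. Because $N$ is a closed embedded submanifold, there is a tubular neighborhood and a well-defined nearest point projection $\pi$ onto $N$; the natural candidate curve is $\sigma(t):=\pi\big((1-t)p+tq\big)$ for $t\in[0,1]$, which lies in $N$ once the straight segment $(1-t)p+tq$ stays in the tubular neighborhood. The length of $\sigma$ is $\int_0^1\|\dot\sigma(t)\|_2\,dt=\int_0^1\|d\pi_{(1-t)p+tq}(q-p)\|_2\,dt$, so everything reduces to a pointwise bound on the operator norm of $d\pi$ at points near $N$ in terms of $C=\|A\|$.

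First I would recall the standard formula for the differential of the nearest point projection: at a point $z=x+v$ with $x\in N$, $v\in T_x^\perp N$, $\|v\|$ small, one has $d\pi_z = (\mathrm{id} + A_v)^{-1}\circ(\text{orthogonal projection onto } T_xN)$ acting on the appropriate subspaces, where $A_v$ is the shape operator in the normal direction $v$. Hence $\|d\pi_z\|\le (1-\|v\|\,C)^{-1}$ provided $\|v\|\,C<1$. If $\|p-q\|_2<\delta$, then every point $(1-t)p+tq$ on the segment lies within distance $\|p-q\|_2<\delta$ of $N$ (indeed within $\delta$ of $p\in N$), so choosing $\delta_0<1/C$ guarantees $\|v\|\le\delta\le\delta_0<1/C$ along the whole segment, and also keeps the segment inside the tubular neighborhood on which $\pi$ is smooth. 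Therefore
\begin{equation*}
d^N(p,q)\le \mathrm{length}(\sigma)=\int_0^1\|d\pi_{(1-t)p+tq}(q-p)\|_2\,dt\le \frac{1}{1-\delta C}\|p-q\|_2,
\end{equation*}
which is the claimed inequality.

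The only genuinely technical point is making the formula $\|d\pi_z\|\le(1-\|v\|C)^{-1}$ rigorous and verifying that the normal displacement $v$ of a point $(1-t)p+tq$ on the chord is indeed at most $\|p-q\|_2$; the latter follows because the foot point of the projection realizes the distance to $N$, which is no larger than the distance to the particular point $p\in N$. I would also need to fix $\delta_0$ small enough (depending only on $N$, hence on $C$ and the injectivity radius of the tubular neighborhood) so that $\pi$ is $C^1$ on the $\delta_0$-neighborhood of $N$ and the Weingarten bound applies uniformly; this is where the constant $C$ and the requirement $\delta_0<1/C$ enter. I expect no serious obstacle here — this is the classical Lipschitz estimate for the nearest point projection in a tubular neighborhood, stated in the quantitative form needed for the parallel transport construction later in the Appendix.
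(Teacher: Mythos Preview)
The paper does not supply a proof of this lemma; it is stated in the Appendix and used immediately, with the surrounding text indicating that the appendix material is taken over from \cite{jost2019short}. So there is no ``paper's own proof'' to compare against.

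Your argument is the standard one and is correct. The only cosmetic point is the sign in the formula for $d\pi_z$: with the usual convention $A_v X = -(\nabla_X \nu)^T$ for a normal field $\nu$ with $\nu(x)=v$, one obtains $d\pi_z = (\mathrm{id}-A_v)^{-1}\circ P_{T_xN}$ rather than $(\mathrm{id}+A_v)^{-1}$, but of course this does not affect the operator norm bound $\|d\pi_z\|\le (1-\|v\|\,C)^{-1}$. Your observation that the normal displacement of any point on the chord is at most $\|p-q\|_2$ (since $p\in N$ is a competitor for the foot point) is exactly what is needed, and the choice of $\delta_0$ smaller than both $1/C$ and the width of a uniform tubular neighborhood of the compact submanifold $N$ makes everything go through.
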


In the following, we will choose $\delta$ and $R$ to ensure the existence of the unique shortest geodesics between the projections of any two elements in $B^T_R(\bar{u}_0)$. By the definition of $B^T_R(\bar{u}_0)$, we have
\begin{equation*}
||u(x,t)-\bar{u}_0(x,t)||_2=||u(x,t)-u_0(x)||_2\leq R
\end{equation*}
for all $(x,t)\in M\times[0,T]$. Then taking any $R\leq\delta$, we get
\begin{equation*}
d(u(x,t),N)\leq||u(x,t)-u_0(x)||_2\leq\delta
\end{equation*}
for all $(x,t)\in M\times[0,T]$. Therefore, $u(x,t)\in N_\delta$. In particular, $\pi\circ{u}$ is $N$-valued, and
\begin{equation}\label{proj u}
||(\pi\circ{u})(x,t)-u_0(x)||_2\leq||(\pi\circ{u})(x,t)-u(x,t)||_2+||u(x,t)-u_0(x)||_2\leq2\delta.
\end{equation}
Now, we choose $\epsilon>0$ with $2\epsilon<{\rm inj}(N)$ and $\delta$ such that
\begin{align}\label{delta}
\delta<\min\set{\frac14\delta_0,\frac14\epsilon (1-\delta_0C)},
\end{align}
where $\delta_0,C>0$ are as in Lemma \ref{distance control}. From \eqref{proj u}, we know that for all $u,v\in B^T_R(\bar{u}_0)$, it holds that
\begin{equation*}
||(\pi\circ{u})(x,t)-(\pi\circ{v})(x,s)||_2\leq4\delta<\delta_0.
\end{equation*}
Then Lemma \ref{distance control} and \eqref{delta} imply that
\begin{equation}\label{control distance on N}
\begin{split}
d^N((\pi\circ{u})(x,t),(\pi\circ{v})(x,s))&\leq\frac{1}{1-\delta_0C}||(\pi\circ{u})(x,t)-(\pi\circ{v})(x,s)||_2\\
&\leq\frac{1}{1-\delta_0C}4\delta<\epsilon<\frac12{\rm inj}(N).
\end{split}\end{equation}
To summarize, under the choice of constants as follows:
\begin{equation}\label{choice of constants}
		\begin{cases}
		\epsilon>0, & \text{s.t.} \ 2\epsilon<{\rm inj}(N), \\
		\delta>0, & \text{s.t.} \ \delta<\min\{\frac14\delta_0,\frac14\epsilon(1-\delta_0C)\}, \\
		R\leq\delta,
				\end{cases}
	\end{equation}
we have shown that
\begin{equation}\label{small nbhd}
u(x,t)\in N_\delta
\end{equation}
and
\begin{equation}\label{shortest geodesic}
d^N((\pi\circ u)(x,t),(\pi\circ v)(x,s))<\epsilon<\frac12{\rm inj}(N)
\end{equation}
for all $u,v\in B_R^T(\bar{u}_0)$, $x\in M$ and $t,s\in[0,T]$.

Using the  properties \eqref{small nbhd} and \eqref{shortest geodesic}, we can prove two important estimates. One is for the Dirac operators along maps.

\begin{lem}\label{Dirac along maps u}
Choose $\epsilon$, $\delta$ and $R$ as in \eqref{choice of constants}. If $\epsilon>0$ is small enough, then there exists $C=C(R)>0$ such that
\begin{equation*}
||((P^{v_s,u_t})^{-1}\slashed{D}_{1,0}^{\pi\circ{u_t}}P^{v_s,u_t}-\slashed{D}_{1,0}^{\pi\circ{v_s}})\psi(x)||\leq C||u_t-v_s||_{C^0(M,\mathbb{R}^q)}||\psi(x)||
\end{equation*}
for any $u,v\in B^T_R(\bar{u}_0)$, $\psi\in\Gamma_{C^1}(\Sigma M\otimes(\pi\circ{v_s})^*T_{1,0}N)$, $x\in M$ and $t,s\in[0,T]$.
\end{lem}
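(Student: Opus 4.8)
## Proof proposal for Lemma \ref{Dirac along maps u}

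The plan is to compare the two twisted Dirac operators pointwise, working in a local orthonormal frame $\{e_\beta\}$ on $M$, by unwinding the definition $\slashed D_{1,0}^w = e_\beta\cdot\tilde\nabla_{e_\beta}$ restricted to $\Sigma M\otimes w^*T_{1,0}N$ and recognizing that the only $w$-dependence sits in the connection coefficients, i.e. in the pulled-back Christoffel symbols $\Gamma^i_{jk}(w)\nabla w^j$ (equivalently, in the second-fundamental-form term $\mathcal{S}(dw(e_\beta),\cdot)$ of the extrinsic formulation). Conjugating $\slashed D_{1,0}^{\pi\circ u_t}$ by the parallel transport $P^{v_s,u_t}$ carries the operator back to sections of $\Sigma M\otimes(\pi\circ v_s)^*T_{1,0}N$, so that $(P^{v_s,u_t})^{-1}\slashed D_{1,0}^{\pi\circ u_t}P^{v_s,u_t}-\slashed D_{1,0}^{\pi\circ v_s}$ is a genuine endomorphism (a zeroth-order operator): the top-order part $e_\beta\cdot\nabla_{e_\beta}$ on the ambient spinor components is independent of the map and is intertwined by $P^{v_s,u_t}$ up to the derivative of the parallel transport, so the difference involves (a) $\nabla P^{v_s,u_t}$ and (b) the difference of the two connection-coefficient terms.

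First I would fix the point $x$, choose coordinates on $N$ near $(\pi\circ v_s)(x)$ (using \eqref{shortest geodesic}, both $(\pi\circ u_t)(x)$ and $(\pi\circ v_s)(x)$ lie in one such chart of size $<\tfrac12\mathrm{inj}(N)$), and write both operators in that chart. Then I would expand the difference into finitely many terms, each of which is schematically a smooth function of $(\pi\circ u_t)(x), (\pi\circ v_s)(x)$ and their first $M$-derivatives, contracted with Clifford multiplication and $\psi(x)$; crucially every term carries a factor that vanishes when $u_t=v_s$, namely either $(\pi\circ u_t)(x)-(\pi\circ v_s)(x)$, or $\nabla(\pi\circ u_t)(x)-\nabla(\pi\circ v_s)(x)$, or $\nabla_x P^{v_s,u_t}$, the last of which is controlled by the distance $d^N((\pi\circ u_t)(x),(\pi\circ v_s)(x))$ together with the derivatives of the endpoints. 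Next I would invoke smoothness of $\pi$ on $N_\delta$, smoothness of the Christoffel symbols and of $\exp^{-1}$ on the relevant compact region, and the $C^1$-bound $\|u\|_{X_T},\|v\|_{X_T}\le R+\|\bar u_0\|_{X_T}$ built into $B^T_R(\bar u_0)$, to bound all the smooth coefficients by a constant $C=C(R)$ (here the hypothesis that $\epsilon$ is small, hence $N_\delta$ a genuine tubular neighborhood and the shortest geodesics unique and short, is used to make every object well-defined and uniformly bounded). Finally, Lemma \ref{distance control} converts $d^N((\pi\circ u_t)(x),(\pi\circ v_s)(x))\le C\|u_t-v_s\|_{C^0}$, and likewise the chart distance $\|(\pi\circ u_t)(x)-(\pi\circ v_s)(x)\|$ and the first-derivative differences are estimated by $\|u_t-v_s\|_{C^0(M,\mathbb R^q)}$ using the Lipschitz (indeed smooth) dependence of $\pi$ and its differential; collecting the factors of $\|\psi(x)\|$ from the zeroth-order nature of the difference gives exactly the asserted inequality.

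The routine part is the bookkeeping of the chart expansion; the one genuine subtlety—the main obstacle—is term (b), the term involving $\nabla_x P^{v_s,u_t}$. The parallel transport along the shortest geodesic from $(\pi\circ v_s)(x)$ to $(\pi\circ u_t)(x)$ depends on $x$ through \emph{both} endpoints, so differentiating it in $x$ produces the Jacobi-field / derivative-of-$\exp$ terms; one must check that this derivative is bounded by a constant times $\big(\|\nabla(\pi\circ u_t)(x)-\nabla(\pi\circ v_s)(x)\| + (\|\nabla(\pi\circ u_t)(x)\|+\|\nabla(\pi\circ v_s)(x)\|)\, d^N(\cdot,\cdot)\big)$, which is where smallness of $\epsilon$ (keeping the geodesics short and within a region where $\exp$ and its derivatives are uniformly controlled) is essential. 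Since this is precisely the estimate carried out in \cite{jost2019short} with $u^*TN$ in place of $u^*T_{1,0}N$, and the decomposition $(u^*TN)^{\mathbb C}=u^*T_{1,0}N\oplus u^*T_{0,1}N$ is parallel, the argument transfers verbatim: restricting all bundles, connections, and the parallel transport to the $T_{1,0}$-summand changes nothing in the estimates, so the constant $C=C(R)$ is inherited from the untwisted case.
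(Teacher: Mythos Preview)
Your coordinate expansion is a reasonable starting point, but there is a genuine gap at the key step. You assert that ``the first-derivative differences are estimated by $\|u_t-v_s\|_{C^0(M,\mathbb{R}^q)}$''. This is false: the quantity $\nabla(\pi\circ u_t)(x)-\nabla(\pi\circ v_s)(x)$ is controlled only by a $C^1$ norm of $u_t-v_s$, not by the $C^0$ norm appearing in the lemma. Your own bound on $\nabla_x P^{v_s,u_t}$ already contains a term $\|\nabla(\pi\circ u_t)-\nabla(\pi\circ v_s)\|$, and the difference of the Christoffel terms $\Gamma(f_1)\nabla f_1-\Gamma(f_0)\nabla f_0$ contributes another such term. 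What actually happens in the coordinate approach is that these two $C^1$-difference contributions \emph{cancel} to leading order (using $P(p,q)=\mathrm{id}-\Gamma(p)(q-p)+O(|q-p|^2)$), leaving only terms of size $d^N(f_0,f_1)\cdot C(R)$. Without exhibiting that cancellation, your sketch does not prove the $C^0$ estimate.

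The paper sidesteps this bookkeeping by a curvature argument: write $f_0=\pi\circ v_s$, $f_1=\pi\circ u_t$, set $F(x,r)=\exp_{f_0(x)}(r\exp^{-1}_{f_0(x)}f_1(x))$, and let $\Theta_i(x,r)=\mathcal{P}_{0,r}(b_i\circ f_0)$ be the transported frame. Then the coefficients $T_{ij}(x,r)$ defined by $(\mathcal{P}_{0,r})^{-1}\nabla^{F^*T_{1,0}N}_{e_\alpha}\Theta_i=\sum_jT_{ij}(b_j\circ f_0)$ satisfy
\[
\partial_r\Bigl((\mathcal{P}_{0,r})^{-1}\nabla^{F^*T_{1,0}N}_{e_\alpha}\Theta_i\Bigr)
=(\mathcal{P}_{0,r})^{-1}R^{T_{1,0}N}\bigl(dF(\partial_r),dF(e_\alpha)\bigr)\Theta_i,
\]
since $\nabla_{\partial_r}\Theta_i=0$ and $[\partial_r,e_\alpha]=0$. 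Integrating in $r$ and using $\|dF(\partial_r)\|=d^N(f_0,f_1)\leq C\|u_t-v_s\|_{C^0}$ together with the a priori bound $\|dF(e_\alpha)\|\leq C(R)$ gives the $C^0$ estimate directly, with no $C^1$-difference terms ever appearing. This is exactly the cancellation you would have to uncover by hand in the chart computation, packaged invariantly.
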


\begin{proof}
We write $f_0:=\pi\circ{v_s}$, $f_1:=\pi\circ{u_t}$ and define the $C^1$ map $F: M\times[0,1]\to N$ by
\begin{equation*}
F(x,t):=\exp_{f_0(x)}(t\exp^{-1}_{f_0(x)}f_1(x))
\end{equation*}
where $\exp$ denotes the exponential map of the Riemannian manifold $N$. Note that $F(\cdot,0)=f_0$, $F(\cdot,1)=f_1$ and $t\mapsto F(x,t)$ is the unique shortest geodesic from $f_0(x)$ to $f_1(x)$. We denote by
\begin{equation*}
\mathcal{P}_{t_1,t_2}=\mathcal{P}_{t_1,t_2}(x): T_{1,0}N|_{F(x,t_1)}\to T_{1,0}N|_{F(x,t_2)}
\end{equation*}
the parallel transport in $F^*T_{1,0}N$ with respect to $\nabla^{F^*T_{1,0}N}$ (pullback of the connection on $T_{1,0}N$) along the curve $\gamma_x(t):=(x,t)$ from $\gamma_x(t_1)$ to $\gamma_x(t_2)$, $x\in M$, $t_1,t_2\in[0,1]$. In particular, $\mathcal{P}_{0,1}=P^{v_s,u_t}$. Let $\psi\in\Gamma_{C^1}(\Sigma M\otimes(f_0)^*T_{1,0}N)$. We have
\begin{equation}\label{difference of Dirac operaor}
\begin{split}
&((\mathcal{P}_{0,1})^{-1}\slashed{D}^{f_1}\mathcal{P}_{0,1}-\slashed{D}^{f_0})\psi\\
&=(e_\alpha\cdot\psi^i)\otimes(((\mathcal{P}_{0,1})^{-1}\nabla_{e_\alpha}^{f_1^*T_{1,0}N}\mathcal{P}_{0,1}-\nabla_{e_\alpha}^{f_0^*T_{1,0}N})(b_i\circ{f_0}))
\end{split}
\end{equation}
where $\psi=\psi^i\otimes(b_i\circ{f_0})$, $\{b_i\}$ is an orthonormal frame of $T_{1,0}N$, $\psi^i$ are local $C^1$ sections of $\Sigma M$, and $\{e_\alpha\}$ is an orthonormal frame of $TM$.

We define local $C^1$ sections $\Theta_i$ of $F^*T_{1,0}N$ by
\begin{equation*}
\Theta_i(x,t):=\mathcal{P}_{0,t}(x)(b_i\circ{f_0})(x).
\end{equation*}
For each $t\in[0,1]$ we define the functions $T_{ij}(\cdot,t):=T_{ij}^\alpha(\cdot,t)$ by
\begin{equation}\label{Tij}
(\mathcal{P}_{0,t})^{-1}((\nabla_{e_\alpha}^{F^*T_{1,0}N}\Theta_i)(x,t))=\sum_jT_{ij}^\alpha(x,t)(b_j\circ{f_0})(x).
\end{equation}
So far, we only know that the $T_{ij}$ are continuous. In the following, we will perform some formal calculations and justify them afterwards. By a straightforward computation, we have
\begin{equation}\label{difference of T}
\begin{split}
&||((\mathcal{P}_{0,1})^{-1}\nabla_{e_\alpha}^{f_1^*T_{1,0}N}\mathcal{P}_{0,1}-\nabla_{e_\alpha}^{f_0^*T_{1,0}N})(b_i\circ{f_0})(x)||^2\\
&=||(\mathcal{P}_{0,1})^{-1}((\nabla_{e_\alpha}^{F^*T_{1,0}N}\Theta_i)(x,1))-(\mathcal{P}_{0,0})^{-1}((\nabla_{e_\alpha}^{F^*T_{1,0}N}\Theta_i)(x,0))||^2\\
&=||\sum_jT_{ij}(x,1)(b_j\circ{f_0})(x)-\sum_jT_{ij}(x,0)(b_j\circ{f_0})(x)||^2\\
&=\sum_j(T_{ij}(x,1)-T_{ij}(x,0))^2\\
&=\sum_j\left(\int_0^1\frac{d}{dt}\bigg|_{t=r}T_{ij}(x,t)dr\right)^2.
\end{split}\end{equation}
Therefore we want to control the first time-derivative of the $T_{ij}$. Equation \eqref{Tij} implies that these time-derivatives are related to the curvature of $F^*T_{1,0}N$. More precisely, for all $X\in\Gamma(TM)$ we have
\begin{equation}\label{derivative of T}
\begin{split}
&\frac{d}{dt}\bigg|_{t=r}\left((\mathcal{P}_{0,t})^{-1}\left((\nabla_{X}^{F^*T_{1,0}N}\Theta_i)(x,t)\right)\right)\\
&=\frac{d}{dt}\bigg|_{t=0}\left((\mathcal{P}_{0,t+r})^{-1}\left((\nabla_{X}^{F^*T_{1,0}N}\Theta_i)(x,t+r)\right)\right)\\
&=\frac{d}{dt}\bigg|_{t=0}\left((\mathcal{P}_{0,r})^{-1}(\mathcal{P}_{r,r+t})^{-1}\left((\nabla_{X}^{F^*T_{1,0}N}\Theta_i)(x,t+r)\right)\right)\\
&=(\mathcal{P}_{0,r})^{-1}\frac{d}{dt}\bigg|_{t=0}\left((\mathcal{P}_{r,r+t})^{-1}\left((\nabla_{X}^{F^*T_{1,0}N}\Theta_i)(x,t+r)\right)\right)\\
&=(\mathcal{P}_{0,r})^{-1}\left((\nabla_{\frac{\partial}{\partial t}}^{F^*T_{1,0}N}\nabla_{X}^{F^*T_{1,0}N}\Theta_i)(x,r)\right).
\end{split}\end{equation}
Now, let us justify the formal calculations \eqref{difference of T} and \eqref{derivative of T}. Combining the definition of $\Theta_i$ as parallel transport and a careful examination of the regularity of F we
deduce that $(\nabla_{\frac{\partial}{\partial t}}^{F^*T_{1,0}N}\nabla_{X}^{F^*T_{1,0}N}\Theta_i)(x,r)$ exists. Then \eqref{derivative of T} holds. Together with \eqref{Tij}, we know that the $T_{ij}$ are differentiable in $t$. Therefore \eqref{difference of T} also holds. We further get
\begin{equation*}
\begin{split}
&\nabla_{\frac{\partial}{\partial t}}^{F^*T_{1,0}N}\nabla_{X}^{F^*T_{1,0}N}\Theta_i\\
&=R^{F^*T_{1,0}N}(\frac{\partial}{\partial t},X)\Theta_i+\nabla_{X}^{F^*T_{1,0}N}\nabla_{\frac{\partial}{\partial t}}^{F^*T_{1,0}N}\Theta_i-\nabla_{[\frac{\partial}{\partial t},X]}^{F^*T_{1,0}N}\Theta_i\\
&=R^{F^*T_{1,0}N}(\frac{\partial}{\partial t},X)\Theta_i=R^{T_{1,0}N}(dF(\frac{\partial}{\partial t}),dF(X))\Theta_i,
\end{split}\end{equation*}
since $\nabla_{\frac{\partial}{\partial t}}^{F^*T_{1,0}N}\Theta_i=0$ by the definition of $\Theta_i$ and $[\frac{\partial}{\partial t},X]=0$.

This implies
\begin{equation*}
\begin{split}
\sum_j\left(\frac{d}{dt}\bigg|_{t=r}T_{ij}(x,t)\right)^2
&=||\frac{d}{dt}\bigg|_{t=r}\left((\mathcal{P}_{0,t})^{-1}((\nabla_{e_\alpha}^{F^*T_{1,0}N}\Theta_i)(x,t))\right)||^2\\
&=||\left(\nabla_{\frac{\partial}{\partial t}}^{F^*T_{1,0}N}\nabla_{e_\alpha}^{F^*T_{1,0}N}\Theta_i\right)(x,r)||^2\\
&=||R^{T_{1,0}N}(dF_{(x,r)}(\frac{\partial}{\partial t}),dF_{(x,r)}(e_\alpha))\Theta_i(x,r)||^2\\
&\leq C_1||dF_{(x,r)}({\partial_t})||^2||dF_{(x,r)}(e_\alpha))||^2,
\end{split}\end{equation*}
where $C_1$ only depends on $N$.

In the following we estimate $||dF_{(x,r)}({\partial_t})||$ and $||dF_{(x,r)}(e_\alpha))||$. We have
\begin{equation*}
dF_{(x,r)}({\partial_t}|_{(x,r)})=\frac{\partial}{\partial t}\bigg|_{t=r}(\exp_{f_0(x)}(t\exp^{-1}_{f_0(x)}f_1(x)))=c'(r),
\end{equation*}
where $c(t):=\exp_{f_0(x)}(t\exp^{-1}_{f_0(x)}f_1(x))$ is a geodesic in $N$. In particular, $c'$ is parallel along $c$ and thus $||c'(r)||=||c'(0)||=||\exp^{-1}_{f_0(x)}f_1(x)||$. Therefore, we get
\begin{equation*}
||dF_{(x,r)}({\partial_t})||=||\exp^{-1}_{f_0(x)}f_1(x)||\leq d^N(f_0(x),f_1(x))\leq C_2||u_t-v_s||_{C^0(M,\mathbb{R}^q)},
\end{equation*}
where we have used Lemma \ref{distance control} and the Lipschitz continuity of $\pi$. Moreover, there exists $C_3(R)>0$ such that $||dF_{(x,r)}(e_\alpha))||\leq C_3(R)$ for all $(x,r)\in M\times[0,1]$. Thus,  we have shown
\begin{equation*}
\sum_j\left(\frac{d}{dt}\bigg|_{t=r}T_{ij}(x,t)\right)^2\leq C_1C_2^2C_3(R)^2||u_t-v_s||_{C^0(M,\mathbb{R}^q)}^2
\end{equation*}
for all $(x,t)$. Combining this with \eqref{difference of Dirac operaor} and \eqref{difference of T}, we complete the proof.
\end{proof}

The other one is for the parallel transport.
\begin{lem}\label{PT}
Choose $\epsilon$, $\delta$ and $R$ as in \eqref{choice of constants}. If $\epsilon>0$ is small enough, then there exists $C=C(\epsilon)>0$ such that
\begin{equation*}
||P^{v_s,u_0}P^{u_t,v_s}P^{u_0,u_t}Z-Z||\leq C||u_t-v_s||_{C^0(M,\mathbb{R}^q)}||Z||
\end{equation*}
for all $Z\in T_{1,0}N|_{u_0(x)}$, $u,v\in B^T_R(\bar{u}_0)$, $x\in M$ and $t,s\in[0,T]$.
\end{lem}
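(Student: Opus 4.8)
\textbf{Proof proposal for Lemma \ref{PT}.}
The plan is to read the operator $P^{v_s,u_0}P^{u_t,v_s}P^{u_0,u_t}$ as the holonomy of $T_{1,0}N$ (with the connection induced from the Levi-Civita connection of $N$) around a small geodesic triangle, and to control it by curvature $\times$ area. Fix $x\in M$ and $t,s\in[0,T]$, and set $P_0:=u_0(x)$, $P_1:=\pi\circ u_t(x)$, $P_2:=\pi\circ v_s(x)$. By \eqref{control distance on N} the three points have pairwise $N$-distance $<\epsilon<\tfrac12{\rm inj}(N)$; shrinking $\epsilon$ if necessary, they lie in a common geodesically convex ball, so the three shortest geodesics joining them are unique and the composition above equals parallel transport in $T_{1,0}N$ once around the geodesic triangle $P_0P_1P_2$ (changing the base point on the triangle conjugates the holonomy by an isometry, hence does not change $\|\,\cdot-\mathrm{id}\,\|$).

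Next I would span this triangle by a ruled surface adapted to the short edge $P_1P_2$. Let $\sigma\colon[0,1]\to N$ be the constant-speed shortest geodesic from $P_1$ to $P_2$, so $\|\sigma'(s)\|=d^N(P_1,P_2)$, and set
\[
G(s,r):=\exp_{\sigma(s)}\!\bigl(r\,\exp^{-1}_{\sigma(s)}P_0\bigr),\qquad (s,r)\in[0,1]^2 .
\]
For fixed $x$ this is a smooth map into $N$ whose boundary traverses the three geodesic edges of the triangle $P_0P_1P_2$ (the edge $\{r=1\}$ being the constant path at $P_0$). The standard loop-shrinking argument --- differentiating the parallel transport around the sub-loop $\partial\bigl(G|_{[0,1]\times[0,\rho]}\bigr)$ with respect to $\rho$ and integrating, exactly in the spirit of the curvature identification carried out in the proof of Lemma \ref{Dirac along maps u} --- then gives
\[
\bigl\|P^{v_s,u_0}P^{u_t,v_s}P^{u_0,u_t}-\mathrm{id}_{T_{1,0}N|_{P_0}}\bigr\|_{\mathrm{op}}\;\le\; C(N)\,\mathrm{Area}(G),
\]
where $\mathrm{Area}(G)=\int_0^1\!\!\int_0^1\|\partial_sG\wedge\partial_rG\|\,dr\,ds$ and $C(N)$ depends only on $\sup_N\|R^{T_{1,0}N}\|$ (together with the a priori bound $\mathrm{Area}(G)\le C(R)$ used through Gronwall).

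It then remains to estimate the area. Since $r\mapsto G(s,r)$ is a geodesic, $\|\partial_rG(s,r)\|=d^N(\sigma(s),P_0)$ is bounded by a fixed multiple of $\epsilon$; and $s\mapsto G(s,r)$ is, for each $r$, a variation through geodesics, so $\partial_sG(s,\cdot)$ is a Jacobi field along $r\mapsto G(s,r)$ with boundary values $\sigma'(s)$ at $r=0$ and $0$ at $r=1$. As these geodesics have length $<\epsilon<{\rm inj}(N)$ and $N$ is compact, the standard Jacobi-field estimate gives $\|\partial_sG(s,r)\|\le C(N,\epsilon)\|\sigma'(s)\|=C(N,\epsilon)\,d^N(P_1,P_2)$. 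Hence $\mathrm{Area}(G)\le C\,d^N(P_1,P_2)$, and Lemma \ref{distance control} together with the Lipschitz continuity of $\pi$ (as in \eqref{control distance on N}) yields $d^N(P_1,P_2)=d^N(\pi\circ u_t(x),\pi\circ v_s(x))\le C\|u_t-v_s\|_{C^0(M,\mathbb{R}^q)}$. Combining the three displays, with $C=C(\epsilon)$ absorbing $C(N)$, the diameter bound, and the Jacobi constant, proves the lemma. I expect the holonomy--area estimate to be the only real obstacle: one must justify the differentiation under the parallel transport and carry the regularity bookkeeping for $G$ and the boundedness of the curvature of the induced connection on $T_{1,0}N$ over the compact $N$, precisely as in Lemma \ref{Dirac along maps u}; the Jacobi-field comparison and the distance estimate are then routine.
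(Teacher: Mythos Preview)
Your argument is correct. The paper itself omits the proof of this lemma, stating only that it ``only depend[s] on the existence of the unique shortest geodesic between any two maps in $B^T_R(\bar u_0)$'' and referring to \cite{jost2019short} for the parallel construction; so there is no proof in the paper to compare against directly. Your holonomy--area approach is the standard way to establish such an estimate and is exactly the kind of argument the omission points to: identify $P^{v_s,u_0}P^{u_t,v_s}P^{u_0,u_t}$ as holonomy of the induced connection on $T_{1,0}N$ around the geodesic triangle $P_0P_1P_2$, span it by the ruled surface $G$, and bound $\|H-\mathrm{id}\|$ by $\sup\|R^{T_{1,0}N}\|\cdot\mathrm{Area}(G)$ via the loop-shrinking computation (the same curvature identity used in Lemma~\ref{Dirac along maps u}). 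The Jacobi-field bound on $\partial_s G$ with endpoint data $\sigma'(s)$ and $0$, together with $\|\partial_r G\|\le\epsilon$, gives $\mathrm{Area}(G)\le C(\epsilon)\,d^N(P_1,P_2)$, and Lemma~\ref{distance control} plus the Lipschitz continuity of $\pi$ finish the job. The only cosmetic point is that one should state explicitly that $\epsilon$ is taken small enough to place $P_0,P_1,P_2$ in a common convex ball (so the spanning surface $G$ is well defined and smooth), which you do note.
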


Consequently, we also have
\begin{lem}\label{W1p norm}
Choose $\epsilon$, $\delta$ and $R$ as in \eqref{choice of constants}. For $u,v\in B^T_R(\bar{u}_0)$, $s,t\in[0,T]$, the operator norm of the isomorphism of Banach spaces
\begin{equation*}
P^{v_s,u_t}: \Gamma_{W^{1,p}}(\Sigma M\otimes(\pi\circ{v_s})^*T_{1,0}N)\to\Gamma_{W^{1,p}}(\Sigma M\otimes(\pi\circ{u_t})^*T_{1,0}N)
\end{equation*}
is uniformly bounded, i.e. there exists $C=C(R,p)$ such that
\begin{equation*}
||P^{v_s,u_t}||_{L(W^{1,p},W^{1,p})}\leq C
\end{equation*}
for all $u,v\in B^T_R(\bar{u}_0)$, $x\in M$ and $t,s\in[0,T]$.
\end{lem}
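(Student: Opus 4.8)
The plan is to reduce the statement to the pointwise estimate already contained in the proof of Lemma~\ref{Dirac along maps u}. Write $f_0:=\pi\circ v_s$ and $f_1:=\pi\circ u_t$. Under the choice of constants \eqref{choice of constants}, estimate \eqref{shortest geodesic} guarantees a unique shortest geodesic in $N$ from $f_0(x)$ to $f_1(x)$ for every $x$, so $P^{v_s,u_t}=\mathrm{id}_{\Sigma M}\otimes\mathcal P_{0,1}$ is well defined, with $\mathcal P_{0,1}(x)$ the parallel transport in $(T_{1,0}N,\nabla)$ along that geodesic. Since $\nabla$ is a metric connection on $T_{1,0}N$, $\mathcal P_{0,1}(x)$ is a fibrewise complex-linear isometry; hence $P^{v_s,u_t}$ preserves the pointwise norm of sections, so $\|P^{v_s,u_t}\psi\|_{L^p}=\|\psi\|_{L^p}$. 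It therefore only remains to bound $\|\tilde\nabla^{u_t}(P^{v_s,u_t}\psi)\|_{L^p}$ by $C(R,p)\,\|\psi\|_{W^{1,p}}$.

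First I would compute that covariant derivative. On the twisted bundle the connection is $\tilde\nabla=\nabla^{\Sigma M}\otimes 1+1\otimes\nabla^{(\cdot)^{*}T_{1,0}N}$ and $P^{v_s,u_t}$ touches only the second factor, so writing $\psi=\psi^i\otimes(b_i\circ f_0)$ in a local Hermitian orthonormal frame $\{b_i\}$ of $T_{1,0}N$,
\[
\tilde\nabla^{u_t}_{e_\alpha}\bigl(P^{v_s,u_t}\psi\bigr)-P^{v_s,u_t}\bigl(\tilde\nabla^{v_s}_{e_\alpha}\psi\bigr)
=\psi^i\otimes\mathcal P_{0,1}\Bigl(\bigl(\mathcal P_{0,1}^{-1}\nabla^{f_1^{*}T_{1,0}N}_{e_\alpha}\mathcal P_{0,1}-\nabla^{f_0^{*}T_{1,0}N}_{e_\alpha}\bigr)(b_i\circ f_0)\Bigr).
\]
The bracketed operator is the difference of two connections, hence a genuine bundle endomorphism of $f_0^{*}T_{1,0}N$, and the norm of its action on $b_i\circ f_0$ is exactly what was estimated inside the proof of Lemma~\ref{Dirac along maps u}: via the curvature identity \eqref{derivative of T}--\eqref{difference of T} and the bounds $\|dF(\partial_t)\|\le C\,\|u_t-v_s\|_{C^0(M,\mathbb R^q)}$, $\|dF(e_\alpha)\|\le C(R)$, one obtains $\le C(R)\,\|u_t-v_s\|_{C^0}$. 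Since $\mathcal P_{0,1}$ is an isometry and $\{b_i(f_0(x))\}$ is orthonormal, the pointwise norm of the displayed difference is $\le C(R,n)\,\|u_t-v_s\|_{C^0}\,|\psi|$, and on $B^{T}_R(\bar u_0)$ one has $\|u_t-v_s\|_{C^0}\le\|u_t-u_0\|_{C^0}+\|u_0-v_s\|_{C^0}\le 2R$. Combining this with $\|P^{v_s,u_t}(\tilde\nabla^{v_s}_{e_\alpha}\psi)\|=\|\tilde\nabla^{v_s}_{e_\alpha}\psi\|$, summing over $\alpha$ and integrating gives $\|\tilde\nabla^{u_t}(P^{v_s,u_t}\psi)\|_{L^p}\le\|\tilde\nabla^{v_s}\psi\|_{L^p}+C(R)\|\psi\|_{L^p}$. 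Together with the $L^p$-identity this yields $\|P^{v_s,u_t}\|_{L(W^{1,p},W^{1,p})}\le C(R,p)$; interchanging the roles of $(u,t)$ and $(v,s)$ gives the same bound for $P^{u_t,v_s}=(P^{v_s,u_t})^{-1}$, so $P^{v_s,u_t}$ is a Banach-space isomorphism with uniformly bounded inverse, as claimed.

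Two points deserve care. The frame $\{b_i\}$ lives only on a chart of $N$; since $N$ is compact one covers it by finitely many such charts and patches the estimates with a partition of unity, absorbing the fixed transition constants into $C(R,p)$. The \emph{main obstacle} is the regularity bookkeeping: elements of $X_T$ are merely $C^1$ in the space variable, so $F$ is only $C^1$ and the bundle map $\mathcal P_{0,1}(x)$ need not be $C^1$ in $x$, whence the Leibniz computation and the curvature identity above are a priori formal. As in the proof of Lemma~\ref{Dirac along maps u} (``let us justify the formal calculations''), one repairs this by first proving the inequality for smooth $u,v$ and $C^1$ (or smooth) spinors $\psi$, where all manipulations are legitimate, and then passing to the $W^{1,p}$-closure using the density of smooth sections together with the $L^p$-isometry property, which already controls the zeroth-order part uniformly.
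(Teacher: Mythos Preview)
Your argument is correct and is precisely the expected one: the paper itself omits the proof, remarking only that it ``depend[s] on the existence of the unique shortest geodesic'' and is parallel to \cite{jost2019short}, and your reduction to the connection-difference estimate already carried out in the proof of Lemma~\ref{Dirac along maps u} is exactly how that reference proceeds. Your handling of the regularity issue by density is also the standard fix; note in addition that since parallel transport depends smoothly on the endpoints and $f_0,f_1$ are $C^1$, the section $\mathcal P_{0,1}(b_i\circ f_0)$ is already $C^1$ in $x$, so the Leibniz computation is in fact legitimate for $C^1$ spinors without approximation, and the pointwise bound on the zeroth-order operator then extends to $W^{1,p}$ sections by density.
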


The proofs of these two lemmas only depend on the existence of the unique shortest geodesic between any two maps in $B^T_R(\bar{u}_0)$, which was already shown in \eqref{shortest geodesic}. So we omit them here. Besides, by Lemma \ref{Dirac along maps u}, one can immediately prove the following Lemma by the Min-Max principle.

\begin{lem}\label{dim of kernel u}
Assume that ${\rm dim}_{\mathbb{H}}{\rm ker}(\slashed{D}_{1,0}^{u_0})=1$. Choose $\epsilon$, $\delta$ and $R$ as in Lemma \ref{Dirac along maps u}. If $R$ is small enough, then
\begin{equation*}
{\rm dim}_{\mathbb{H}}{\rm ker}(\slashed{D}_{1,0}^{\pi\circ u_t})=1
\end{equation*}
and there exists $\Lambda=\frac12\Lambda(u_0)$ such that
\begin{equation*}
\#\{{\rm spec}(\slashed{D}_{1,0}^{\pi\circ u_t})\cap[-\Lambda,\Lambda]\}=1
\end{equation*}
for any $u\in B^T_R(\bar{u}_0)$ and $t\in[0,T]$, where $\Lambda(u_0)$ is a constant such that ${\rm spec}(\slashed{D}_{1,0}^{u_0})\setminus\{0\}\subset\mathbb{R}\setminus(-\Lambda(u_0),\Lambda(u_0))$.
\end{lem}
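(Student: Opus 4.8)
The plan is to realise $\slashed{D}_{1,0}^{\pi\circ u_t}$, after an isometric change of bundle, as a bounded self-adjoint perturbation of the \emph{fixed} operator $\slashed{D}_{1,0}^{u_0}$ whose perturbation norm is $O(R)$, and then to count the eigenvalues near $0$ by the min-max principle, using the symmetry of the spectrum to pin the relevant eigenvalue to $0$.

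First I would apply Lemma \ref{Dirac along maps u} with $v=\bar u_0$, so that $v_s=u_0$ and $\pi\circ v_s=u_0$ for every $s\in[0,T]$. Parallel transport along shortest geodesics is a fibrewise isometry, hence $P^{u_0,u_t}$ extends to an $L^2$-isometry $\Gamma_{L^2}(\Sigma M\otimes u_0^*T_{1,0}N)\to\Gamma_{L^2}(\Sigma M\otimes(\pi\circ u_t)^*T_{1,0}N)$, and since it acts only on the $T_{1,0}N$-factor, conjugation by it leaves the principal symbol of $\slashed{D}_{1,0}$ unchanged. Thus
\begin{equation*}
A_t:=(P^{u_0,u_t})^{-1}\circ\slashed{D}_{1,0}^{\pi\circ u_t}\circ P^{u_0,u_t}-\slashed{D}_{1,0}^{u_0}
\end{equation*}
is a zeroth-order operator, symmetric on $\Gamma_{L^2}(\Sigma M\otimes u_0^*T_{1,0}N)$ as the difference of two self-adjoint operators with the same domain. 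By Lemma \ref{Dirac along maps u}, $\|A_t\psi\|_{L^2}\le C\|u_t-u_0\|_{C^0(M,\mathbb{R}^q)}\|\psi\|_{L^2}\le CR\,\|\psi\|_{L^2}$ uniformly in $u\in B^T_R(\bar u_0)$ and $t\in[0,T]$, so $\slashed{D}_{1,0}^{u_0}+A_t$ is a bounded self-adjoint perturbation of $\slashed{D}_{1,0}^{u_0}$ of operator norm $\le CR$ which is unitarily equivalent to $\slashed{D}_{1,0}^{\pi\circ u_t}$, hence has the same spectrum.

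Next, for a self-adjoint operator $D$ with $D^2$ having compact resolvent and $\mu\ge 0$, set $N(\mu;D):=\max\{\dim_{\mathbb{C}}V:V\subset\mathrm{dom}(D),\ \|D\phi\|_{L^2}\le\mu\|\phi\|_{L^2}\ \text{for all }\phi\in V\}$; by the min-max characterisation of the eigenvalues of $D^2$ this is the number of eigenvalues of $D$ in $[-\mu,\mu]$ counted with complex multiplicity. From $\|D_1\phi\|\le\|D_2\phi\|+CR\|\phi\|$ and $\|D_2\phi\|\le\|D_1\phi\|+CR\|\phi\|$, with $D_1=\slashed{D}_{1,0}^{u_0}+A_t$ and $D_2=\slashed{D}_{1,0}^{u_0}$, one gets the sandwich $N(\Lambda-CR;\slashed{D}_{1,0}^{u_0})\le N(\Lambda;\slashed{D}_{1,0}^{\pi\circ u_t})\le N(\Lambda+CR;\slashed{D}_{1,0}^{u_0})$ with $\Lambda=\tfrac12\Lambda(u_0)$. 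I would then fix $R$ small enough that $CR<\tfrac12\Lambda(u_0)$; then $[-\Lambda-CR,\Lambda+CR]\subset(-\Lambda(u_0),\Lambda(u_0))$ meets $\mathrm{spec}(\slashed{D}_{1,0}^{u_0})$ only at $0$, so both ends of the sandwich equal $\dim_{\mathbb{C}}\ker\slashed{D}_{1,0}^{u_0}=2$ (using $\dim_{\mathbb{H}}\ker\slashed{D}_{1,0}^{u_0}=1$). Hence the eigenvalues of $\slashed{D}_{1,0}^{\pi\circ u_t}$ in $[-\Lambda,\Lambda]$ have total complex multiplicity $2$; since every eigenspace of $\slashed{D}_{1,0}^{\pi\circ u_t}$ is quaternionic (Theorem \ref{inv}, applied to the map $\pi\circ u_t$, which is legitimate in the standing setting where $N$ carries the parallel real structure $j_2$) and therefore of even complex dimension, this forces a \emph{single} eigenvalue of complex multiplicity $2$, i.e.\ $\#\{\mathrm{spec}(\slashed{D}_{1,0}^{\pi\circ u_t})\cap[-\Lambda,\Lambda]\}=1$.

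It then remains to identify that eigenvalue with $0$. Since $M$ is a closed surface, the parallel $\mathbb{Z}_2$-grading $G$ of $\Sigma M$ anticommutes with Clifford multiplication, so $\slashed{D}_{1,0}^{\pi\circ u_t}$ is odd with respect to $\Sigma^\pm M\otimes(\pi\circ u_t)^*T_{1,0}N$ and its spectrum is symmetric about the origin — the $m\not\equiv 3\ (\mathrm{mod}\ 4)$ argument already used in the proof of Theorem \ref{inv}. If the unique eigenvalue of $\slashed{D}_{1,0}^{\pi\circ u_t}$ in $[-\Lambda,\Lambda]$ were some $\mu\ne0$, then $-\mu$ would be a second one, a contradiction; hence $\mu=0$ and $\dim_{\mathbb{H}}\ker\slashed{D}_{1,0}^{\pi\circ u_t}=\tfrac12\dim_{\mathbb{C}}\ker\slashed{D}_{1,0}^{\pi\circ u_t}=1$. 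All constants are uniform in $u\in B^T_R(\bar u_0)$ and $t$ because the bound $\|u_t-u_0\|_{C^0}\le R$ is uniform. The one genuinely non-formal ingredient is the first step — that conjugating $\slashed{D}_{1,0}^{\pi\circ u_t}$ by parallel transport produces a bona fide \emph{bounded} (zeroth-order) self-adjoint perturbation with operator norm $O(R)$ — and this is exactly the content of Lemma \ref{Dirac along maps u}; everything afterwards is the standard spectral stability of self-adjoint operators that the min-max principle delivers.
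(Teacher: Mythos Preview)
Your proof is correct and follows exactly the approach the paper indicates: use Lemma \ref{Dirac along maps u} to realise $\slashed{D}_{1,0}^{\pi\circ u_t}$ (after unitary conjugation by parallel transport) as a bounded perturbation of $\slashed{D}_{1,0}^{u_0}$ of norm $O(R)$, and then count eigenvalues via the min-max principle. The paper leaves the argument at that one sentence; you have correctly supplied the two additional ingredients it suppresses --- the quaternionic structure on eigenspaces (forcing even complex multiplicity) and the spectral symmetry from the $\mathbb{Z}_2$-grading (pinning the sole eigenvalue in $[-\Lambda,\Lambda]$ to zero) --- both of which are already available in the paper's standing framework.
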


Once we have the minimality of the kernel in Lemma \ref{dim of kernel u}, we can prove the following uniform bounds for the resolvents, which are  important for the Lipschitz continuity of the solution to the Dirac equation.
\begin{lem}\label{resolvents}
Assume we are in the situation of Lemma \ref{dim of kernel u}. We consider the resolvent $R(\lambda, \slashed{D}_{1,0}^{\pi\circ{u_t}}): \Gamma_{L^2}\to\Gamma_{L^2}$ of $\slashed{D}_{1,0}^{\pi\circ{u_t}}:\Gamma_{W^{1,2}}\to\Gamma_{L^2}$. By the $L^p$ estimate (see Lemma 3.3 in \cite{chen2017estimates}), we know the restriction
\begin{equation*}
R(\lambda, \slashed{D}_{1,0}^{\pi\circ{u_t}}): \Gamma_{L^p}\to\Gamma_{W^{1,p}}
\end{equation*}
is well-defined and bounded for any $2\leq p<\infty$. If $R>0$ is small enough, then there exists $C=C(p,R)>0$ such that
\begin{equation*}
\sup_{|\lambda|=\frac{\Lambda}{2}}||R(\lambda, \slashed{D}_{1,0}^{\pi\circ{u_t}})||_{L(L^p,W^{1,p})}<C
\end{equation*}
for any $u\in B^T_R(\bar{u}_0)$, $t\in[0,T]$.
\end{lem}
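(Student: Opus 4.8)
The plan is to trace everything back to the elementary $L^2\to L^2$ bound on the resolvent and then bootstrap up to $L^p\to W^{1,p}$, using the $L^p$ elliptic estimate already invoked in the statement together with the Sobolev embedding $W^{1,2}(M)\hookrightarrow L^p(M)$, which is available for every finite $p$ because $M$ is two-dimensional. For the $L^2$ bound I would first observe that, $N$ being K\"ahler, the splitting $(u^*TN)^{\mathbb C}=u^*T_{1,0}N\oplus u^*T_{0,1}N$ is parallel and $L^2$-orthogonal, so $\slashed{D}^{\pi\circ u_t}$ preserves the summand $\Sigma M\otimes(\pi\circ u_t)^*T_{1,0}N$ and $\slashed{D}_{1,0}^{\pi\circ u_t}$ is a self-adjoint operator on $L^2$ with discrete real spectrum. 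Lemma~\ref{dim of kernel u} then tells us that, for $R$ small, the only point of $\mathrm{spec}(\slashed{D}_{1,0}^{\pi\circ u_t})$ in $[-\Lambda,\Lambda]$ is $0$, all other eigenvalues $\mu$ satisfying $|\mu|>\Lambda$; hence for every $\lambda$ with $|\lambda|=\Lambda/2$ one has $\mathrm{dist}(\lambda,\mathrm{spec}(\slashed{D}_{1,0}^{\pi\circ u_t}))\ge\Lambda/2$ (the value $\Lambda/2$ comes from the eigenvalue $0$, and $|\mu|-|\lambda|>\Lambda/2$ from every $|\mu|>\Lambda$), so by self-adjointness $\|R(\lambda,\slashed{D}_{1,0}^{\pi\circ u_t})\|_{L(L^2,L^2)}\le 2/\Lambda$, uniformly in $u\in B_R^T(\bar{u}_0)$, $t\in[0,T]$ and $\lambda$ on the circle.

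Next I would bootstrap. Writing $v:=R(\lambda,\slashed{D}_{1,0}^{\pi\circ u_t})f$ we have $\slashed{D}_{1,0}^{\pi\circ u_t}v=f+\lambda v$. The $L^p$ elliptic estimate for the twisted Dirac operator (Lemma~3.3 of \cite{chen2017estimates}) has a constant depending only on $M$, $N$ and a $C^1$-bound for $\pi\circ u_t$; since $u\in B_R^T(\bar{u}_0)$ forces $\|\pi\circ u_t\|_{C^1}\le C(M,N,\|u_0\|_{C^1},R)$, this constant is uniform over the ball. Applying it with $p=2$ and using the $L^2$ bound gives $\|v\|_{W^{1,2}}\le C(\|f\|_{L^2}+(|\lambda|+1)\|v\|_{L^2})\le C'\|f\|_{L^2}$. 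Since $M$ is a surface, $W^{1,2}(M)\hookrightarrow L^p(M)$ for every finite $p$, so for $p\ge 2$, combined with H\"older's inequality on the compact manifold $M$, we get $\|v\|_{L^p}\le C\|v\|_{W^{1,2}}\le C\|f\|_{L^2}\le C\|f\|_{L^p}$ with $C=C(p,M)$. Feeding $\slashed{D}_{1,0}^{\pi\circ u_t}v=f+\lambda v\in L^p$ back into the $L^p$ estimate once more yields $\|v\|_{W^{1,p}}\le C(\|f\|_{L^p}+(|\lambda|+1)\|v\|_{L^p})\le C''\|f\|_{L^p}$; as none of these constants depends on $\lambda$ beyond $|\lambda|=\Lambda/2$, nor on $u\in B_R^T(\bar{u}_0)$ or $t$, taking the supremum over $\lambda$ finishes the proof. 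That $R(\lambda,\slashed{D}_{1,0}^{\pi\circ u_t})$ maps $L^p$ into $W^{1,p}$ in the first place, which legitimizes these manipulations, is already granted in the statement.

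I do not expect a genuine analytic obstacle here. The one thing that actually has to be checked is that the constant in the $L^p$ elliptic estimate of \cite{chen2017estimates} can be taken uniform over the ball $B_R^T(\bar{u}_0)$ --- that is exactly where the restriction $u\in B_R^T(\bar{u}_0)$, and with it the uniform $C^1$-bound on $\pi\circ u_t$, is used. I would also stress that $\dim M=2$ is used essentially, via $W^{1,2}\hookrightarrow L^p$, which lets the bootstrap close in a single step; in higher dimensions one would instead iterate the Sobolev estimates, or split off the part of the resolvent coming from $\ker\slashed{D}_{1,0}^{\pi\circ u_t}$ and bound the finitely many spanning eigenspinors uniformly.
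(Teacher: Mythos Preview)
Your argument is correct. The paper itself does not write out a proof of this lemma; it simply records the statement after noting that ``once we have the minimality of the kernel in Lemma~\ref{dim of kernel u}, we can prove the following uniform bounds for the resolvents,'' and relies on the blanket remark at the start of the Appendix that the proofs are parallel to those in \cite{jost2019short}. What you have written is exactly the standard argument one would expect there: the spectral gap from Lemma~\ref{dim of kernel u} plus self-adjointness gives the uniform $L^2\!\to\! L^2$ resolvent bound $2/\Lambda$, and then the $L^p$ elliptic estimate for the twisted Dirac operator together with the two-dimensional Sobolev embedding $W^{1,2}\hookrightarrow L^p$ bootstraps this to the uniform $L^p\!\to\! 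W^{1,p}$ bound. Your remark that the elliptic constant in Lemma~3.3 of \cite{chen2017estimates} must be made uniform via the $C^1$-control coming from $u\in B_R^T(\bar u_0)$ is precisely the point that ties the estimate to the hypothesis, and is the one place the argument is not purely functional-analytic.
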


Now, by the projector of the Dirac operator, we can construct a solution to the constraint equation whose nontriviality follows from the following lemma.
\begin{lem}\label{projection to kernel}
In the situation of Lemma \ref{dim of kernel u}, for any fixed $u\in B^T_R(\bar{u}_0)$ and any $\psi_0\in{\rm ker}(\slashed{D}^{u_0})$ with $\|\psi_0\|_{L^2}=1$, we have
\begin{equation*}
\sqrt{\frac12}\leq\|\tilde\psi_1^{u_t}\|_{L^2}\leq1,
\end{equation*}
where $\tilde\psi^{u_t}=P^{u_0,u_t}\psi_0=\tilde\psi_1^{u_t}+\tilde\psi_2^{u_t}$ with respect to the decomposition $\Gamma_{L^2}={\rm ker}(\slashed{D}_{1,0}^{\pi\circ u_t})\oplus({\rm ker}(\slashed{D}_{1,0}^{\pi\circ u_t}))^\bot$
\end{lem}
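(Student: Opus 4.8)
The plan is to compare $\tilde\psi^{u_t}=P^{u_0,u_t}\psi_0$ with its projection $\tilde\psi_1^{u_t}$ onto $\mathrm{ker}(\slashed{D}_{1,0}^{\pi\circ u_t})$ by estimating the "defect" $\slashed{D}_{1,0}^{\pi\circ u_t}\tilde\psi^{u_t}$, which should be small because $\psi_0$ is killed by $\slashed{D}_{1,0}^{u_0}$ and the two Dirac operators are close (Lemma \ref{Dirac along maps u}). First, the upper bound $\|\tilde\psi_1^{u_t}\|_{L^2}\le1$ is immediate: parallel transport is a fiberwise isometry, so $\|\tilde\psi^{u_t}\|_{L^2}=\|\psi_0\|_{L^2}=1$, and orthogonal projection in $L^2$ does not increase the norm. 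The real content is the lower bound.

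For the lower bound, I would write $\slashed{D}_{1,0}^{\pi\circ u_t}\tilde\psi^{u_t}=\bigl(\slashed{D}_{1,0}^{\pi\circ u_t}-(P^{u_0,u_t})^{-1}\slashed{D}_{1,0}^{u_0}P^{u_0,u_t}\bigr)\tilde\psi^{u_t}+(P^{u_0,u_t})^{-1}\slashed{D}_{1,0}^{u_0}\psi_0$; wait, it is cleaner to conjugate the other way, i.e. apply $(P^{u_0,u_t})^{-1}$ and use that $(P^{u_0,u_t})^{-1}\slashed{D}_{1,0}^{\pi\circ u_t}P^{u_0,u_t}-\slashed{D}_{1,0}^{u_0}$ has operator norm $\le C\|u_t-u_0\|_{C^0}\le CR$ by Lemma \ref{Dirac along maps u} applied with $v=\bar u_0$, $s=0$. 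Since $\slashed{D}_{1,0}^{u_0}\psi_0=0$, this gives $\|\slashed{D}_{1,0}^{\pi\circ u_t}\tilde\psi^{u_t}\|_{L^2}\le CR\,\|\psi_0\|_{L^2}=CR$. Now decompose $\tilde\psi^{u_t}=\tilde\psi_1^{u_t}+\tilde\psi_2^{u_t}$; since $\slashed{D}_{1,0}^{\pi\circ u_t}$ preserves the splitting and its restriction to the orthogonal complement of the kernel has all spectrum outside $(-\Lambda,\Lambda)$ with $\Lambda=\tfrac12\Lambda(u_0)$ (Lemma \ref{dim of kernel u}), we get $\|\tilde\psi_2^{u_t}\|_{L^2}\le \Lambda^{-1}\|\slashed{D}_{1,0}^{\pi\circ u_t}\tilde\psi_2^{u_t}\|_{L^2}=\Lambda^{-1}\|\slashed{D}_{1,0}^{\pi\circ u_t}\tilde\psi^{u_t}\|_{L^2}\le CR/\Lambda$. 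Choosing $R$ small enough (depending on $\Lambda(u_0)$, i.e. on $u_0$) so that $CR/\Lambda\le\sqrt{1/2}$, the Pythagorean identity $\|\tilde\psi_1^{u_t}\|_{L^2}^2=1-\|\tilde\psi_2^{u_t}\|_{L^2}^2\ge 1-\tfrac12=\tfrac12$ yields the claim.

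The main obstacle is bookkeeping the dependence of constants: Lemma \ref{Dirac along maps u} gives a constant $C=C(R)$, and one must shrink $R$ so that $C(R)\cdot R$ beats the fixed gap $\Lambda=\tfrac12\Lambda(u_0)$; since $C(R)$ stays bounded as $R\to0$ this is harmless, but it must be stated carefully, and it explains why the smallness of $R$ in the lemma's hypothesis depends on $u_0$. A secondary point is that Lemma \ref{Dirac along maps u} is stated for sections of $\Sigma M\otimes(\pi\circ v_s)^*T_{1,0}N$, so one should note $\tilde\psi^{u_t}\in\Gamma_{C^1}$ (as $u_0\in C^{2+\mu}$ and the parallel transport is $C^1$) before invoking it, then extend the resulting estimate to general $L^2$ sections by density. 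Everything else is routine.
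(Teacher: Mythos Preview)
The paper does not supply its own proof of this lemma; it simply states the result and refers the reader to the parallel construction in \cite{jost2019short}. Your argument is correct and is precisely the natural one suggested by the surrounding lemmas: the upper bound is immediate from the isometry of parallel transport and the contractivity of orthogonal projection, while the lower bound follows by combining the operator-closeness estimate of Lemma~\ref{Dirac along maps u} (applied with $v_s=\bar u_0$ at $s=0$) with the uniform spectral gap $\Lambda=\tfrac12\Lambda(u_0)$ from Lemma~\ref{dim of kernel u}, yielding $\|\tilde\psi_2^{u_t}\|_{L^2}\le C R/\Lambda$ and hence the Pythagorean lower bound after shrinking $R$. Your remarks on the dependence of constants and on the $C^1$ regularity needed to invoke Lemma~\ref{Dirac along maps u} are accurate and worth keeping. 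One cosmetic point: the lemma as stated has $\psi_0\in\ker(\slashed{D}^{u_0})$ rather than $\ker(\slashed{D}_{1,0}^{u_0})$, but its use in the proof of Theorem~\ref{short-time alpha flow} (with the bound $\sqrt{3/4}$ for a possibly smaller $R_1$) makes clear the intended reading is the $(1,0)$-kernel, which is exactly what you use.
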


In Section 3, to show the short-time existence of the heat flow for $\alpha$-Dirac-harmonic maps, we need the following Lipschitz estimate.

\begin{lem}\label{lip}
Choose $\delta$ as in \eqref{choice of constants}, $\epsilon$ as in Lemma \ref{Dirac along maps u} and Lemma \ref{PT}, $R$ as in Lemma \ref{dim of kernel u} and Lemma \ref{resolvents}. For any harmonic spinor $\psi_0\in{\rm ker}(\slashed{D}_{1,0}^{u_0})$, we define
\begin{equation*}
\bar\psi(u_t):=\tilde\psi^{u_t}_1=-\frac{1}{2\pi i}\int_{\gamma}R(\lambda,\slashed{D}_{1,0}^{\pi\circ u_t})P^{u_0,u_t}\psi_0d\lambda
\end{equation*}
for any $u\in B^T_R(\bar{u}_0)$, where $\gamma$ is defined in the Section $2$ with $\Lambda=\frac12\Lambda(u_0)$. In particular, $\bar\psi(u_t)\in{\rm ker}(\slashed{D}_{1,0}^{\pi\circ u_t})\subset\Gamma_{C^0}(\Sigma M\otimes(\pi\circ u_t)^*T_{1,0}N)$. We write
\begin{equation*}
\psi(u_t):=\psi(u(\cdot,t))=\frac{\bar\psi(u_t)}{\|\bar\psi(u_t)\|_{L^2}}.
\end{equation*}
Let $\psi^A(u_t)$ be the sections of $\Sigma M$ such that
\begin{equation*}
\psi(u_t)=\psi^A(u_t)\otimes(\partial_A\circ\pi\circ u_t)
\end{equation*}
for $A=1,\cdots,q$. Then there exists $C=C(R,\epsilon,\psi_0)>0$ such that
\begin{equation}\label{difference of psi bar after pt}
\|P^{u_t,v_s}\bar\psi(u_t)(x)-\bar\psi(u_t)(x)\|\leq C\|u_t-v_s\|_{C^0(M,\mathbb{R}^q)}
\end{equation}
and
\begin{equation}\label{psi lip}
\|\psi^A(u_t)(x)-\psi^A(v_s)(x)\|\leq C\|u_t-v_s\|_{C^0(M,\mathbb{R}^q)}
\end{equation}
for all $u,v\in B^T_R(\bar{u}_0)$, $A=1,\cdots,q$, $x\in M$ and $s,t\in[0,T]$.
\end{lem}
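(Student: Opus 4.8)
The plan is to first establish the parallel-transport Lipschitz estimate \eqref{difference of psi bar after pt} for the unnormalized projection $\bar\psi(u_t)$, and then to deduce the componentwise estimate \eqref{psi lip} for $\psi^A(u_t)$ by dividing out the $L^2$-norm, which by Lemma \ref{projection to kernel} is bounded away from $0$ and from above. The starting point is the contour-integral representation
\begin{equation*}
\bar\psi(u_t)=-\frac{1}{2\pi i}\int_{\gamma}R(\lambda,\slashed{D}_{1,0}^{\pi\circ u_t})P^{u_0,u_t}\psi_0\,d\lambda ,
\end{equation*}
which is legitimate for every $u\in B^T_R(\bar u_0)$ because, by Lemma \ref{dim of kernel u}, the circle $\gamma$ of radius $\Lambda/2$ with $\Lambda=\tfrac12\Lambda(u_0)$ encloses precisely the single eigenvalue $0$ of $\slashed{D}_{1,0}^{\pi\circ u_t}$, so that $-\tfrac{1}{2\pi i}\int_\gamma R(\lambda,\slashed{D}_{1,0}^{\pi\circ u_t})\,d\lambda$ is the orthogonal projector onto ${\rm ker}(\slashed{D}_{1,0}^{\pi\circ u_t})$. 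Since the length of $\gamma$ is fixed, it suffices to control, uniformly in $\lambda\in\gamma$, the difference of the integrands for $u_t$ and $v_s$ after transporting everything into the single bundle $\Sigma M\otimes(\pi\circ u_t)^*T_{1,0}N$.

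To carry this out I would conjugate the resolvent by parallel transport, $P^{v_s,u_t}R(\lambda,\slashed{D}_{1,0}^{\pi\circ v_s})(P^{v_s,u_t})^{-1}=R(\lambda,\widetilde{\slashed{D}}_s)$ with $\widetilde{\slashed{D}}_s:=P^{v_s,u_t}\slashed{D}_{1,0}^{\pi\circ v_s}(P^{v_s,u_t})^{-1}$ acting on $\Sigma M\otimes(\pi\circ u_t)^*T_{1,0}N$. By Lemma \ref{Dirac along maps u}, $\widetilde{\slashed{D}}_s-\slashed{D}_{1,0}^{\pi\circ u_t}$ is a bundle endomorphism of operator norm $\le C\|u_t-v_s\|_{C^0(M,\mathbb R^q)}$, so the resolvent identity
\begin{equation*}
R(\lambda,\slashed{D}_{1,0}^{\pi\circ u_t})-R(\lambda,\widetilde{\slashed{D}}_s)=R(\lambda,\slashed{D}_{1,0}^{\pi\circ u_t})\big(\widetilde{\slashed{D}}_s-\slashed{D}_{1,0}^{\pi\circ u_t}\big)R(\lambda,\widetilde{\slashed{D}}_s)
\end{equation*}
together with the uniform resolvent bounds of Lemma \ref{resolvents} in the $L(L^p,W^{1,p})$ operator norm for a fixed $p>2$ gives $\sup_{|\lambda|=\Lambda/2}\|R(\lambda,\slashed{D}_{1,0}^{\pi\circ u_t})-R(\lambda,\widetilde{\slashed{D}}_s)\|_{L(L^p,W^{1,p})}\le C\|u_t-v_s\|_{C^0}$. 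For the sources one writes $P^{v_s,u_t}P^{u_0,v_s}\psi_0-P^{u_0,u_t}\psi_0=(P^{v_s,u_t}P^{u_0,v_s}P^{u_t,u_0}-{\rm id})P^{u_0,u_t}\psi_0$, where the bracket is the holonomy around the geodesic triangle with vertices $u_0(x),u_t(x),v_s(x)$, so Lemma \ref{PT} bounds it pointwise by $C\|u_t-v_s\|_{C^0}\|\psi_0\|$; as $\psi_0\in{\rm ker}(\slashed{D}_{1,0}^{u_0})$ is smooth by elliptic regularity, this persists in $W^{1,p}$. Feeding both estimates back into the contour integral and using Lemma \ref{W1p norm} for the uniform $W^{1,p}$-bound on $P^{u_0,u_t}$ yields \eqref{difference of psi bar after pt} after the Sobolev embedding $W^{1,p}\hookrightarrow C^0$, valid since $p>2=\dim M$.

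For \eqref{psi lip} I would first pass from $\bar\psi$ to $\psi=\bar\psi/\|\bar\psi\|_{L^2}$: parallel transport is an $L^2$-isometry, so $\|\bar\psi(u_t)\|_{L^2}=\|P^{u_t,v_s}\bar\psi(u_t)\|_{L^2}$, and combined with the two-sided bound $\sqrt{1/2}\le\|\bar\psi(\cdot)\|_{L^2}\le1$ from Lemma \ref{projection to kernel} and the elementary inequality $\|a/\|a\|_{L^2}-b/\|b\|_{L^2}\|\le 2\|a-b\|/\min(\|a\|_{L^2},\|b\|_{L^2})$, the estimate \eqref{difference of psi bar after pt} upgrades to the same Lipschitz bound for the normalized spinor. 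One then extracts the components from $\psi(u_t)=\psi^A(u_t)\otimes(\partial_A\circ\pi\circ u_t)$: writing $\psi^A(u_t)-\psi^A(v_s)$ in terms of the transported difference of $\psi$ and of the difference of the frames $\partial_A\circ\pi\circ u_t$ versus $\partial_A\circ\pi\circ v_s$, the latter being controlled by $\|u_t-v_s\|_{C^0}$ through the Lipschitz continuity of $\pi$ and Lemma \ref{distance control}, gives \eqref{psi lip}. The only delicate point is the norm bookkeeping in the second paragraph: the resolvent comparison must be run throughout in $L(L^p,W^{1,p})$ with $p>2$ rather than merely in $L^2$, so that the final bound lands in $C^0$, and every parallel-transport conjugation must be arranged to stay within the fixed bundle $\Sigma M\otimes(\pi\circ u_t)^*T_{1,0}N$ so that the resolvent identity applies; modulo this, the argument is formally identical to the corresponding one in \cite{jost2019short}, with $u^*TN$ replaced by $u^*T_{1,0}N$.
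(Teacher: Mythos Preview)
Your proposal is correct and follows the same route as the paper's proof: the resolvent identity together with Lemmas \ref{Dirac along maps u}, \ref{PT}, \ref{W1p norm}, \ref{resolvents} and the Sobolev embedding $W^{1,p}\hookrightarrow C^0$ (for $p>2$) yield \eqref{difference of psi bar after pt}, and Lemma \ref{projection to kernel} controls the normalization leading to \eqref{psi lip}. The only cosmetic difference is in the last extrinsic step: the paper first extracts the ambient components $\bar\psi^A$ and then normalizes, bounding $\|P^{u_t,v_s}X-X\|_{\mathbb R^q}$ for $X\in T_{\pi\circ u_t(x)}N$ by writing $P^{u_t,v_s}X-X=\int_0^1 II(\gamma'(r),X(r))\,dr$ along the connecting geodesic---your phrase ``difference of the frames $\partial_A\circ\pi\circ u_t$ versus $\partial_A\circ\pi\circ v_s$'' is slightly imprecise, since the $\partial_A$ are constant vectors in $\mathbb R^q$ and what actually changes is the embedding of the fibre, captured precisely by this second-fundamental-form integral.
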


\begin{proof}
Using the following resolvent identity for two operators $D_1,D_2$
\begin{equation*}
R(\lambda,D_1)-R(\lambda,D_2)=R(\lambda,D_1)\circ(D_1-D_2)\circ R(\lambda,D_2),
\end{equation*}
we have
\begin{equation*}
\begin{split}
&P^{u_t,v_s}\bar\psi(u_t)-\bar\psi(v_s)\\
%&=-\frac{1}{2\pi i}\bigg(\int_{\gamma}R(\lambda,P^{u_t,v_s}\slashed{D}_{1,0}^{\pi\circ u_t}(P^{u_t,v_s})^{-1})P^{u_t,v_s}P^{u_0,u_t}\psi_0\\
%&\quad-\int_{\gamma}R(\lambda,\slashed{D}_{1,0}^{\pi\circ v_s})P^{u_0,v_s}\psi_0\bigg)\\
&=-\frac{1}{2\pi i}\int_{\gamma}R(\lambda,P^{u_t,v_s}\slashed{D}_{1,0}^{\pi\circ u_t}(P^{u_t,v_s})^{-1})\bigg(P^{u_t,v_s}P^{u_0,u_t}\psi_0-P^{u_0,v_s}\psi_0\bigg)\\
&\quad-\frac{1}{2\pi i}\int_{\gamma}\bigg(R(\lambda,P^{u_t,v_s}\slashed{D}_{1,0}^{\pi\circ u_t}(P^{u_t,v_s})^{-1})-R(\lambda,\slashed{D}_{1,0}^{\pi\circ v_s})\bigg)P^{u_0,v_s}\psi_0\\
&=-\frac{1}{2\pi i}\int_{\gamma}R(\lambda,P^{u_t,v_s}\slashed{D}_{1,0}^{\pi\circ u_t}(P^{u_t,v_s})^{-1})\bigg(P^{u_t,v_s}P^{u_0,u_t}\psi_0-P^{u_0,v_s}\psi_0\bigg)\\
&\begin{split}
\quad-\frac{1}{2\pi i}\int_{\gamma}&\bigg(R(\lambda,P^{u_t,v_s}\slashed{D}_{1,0}^{\pi\circ u_t}(P^{u_t,v_s})^{-1})\circ\left(P^{u_t,v_s}\slashed{D}_{1,0}^{\pi\circ u_t}(P^{u_t,v_s})^{-1}-\slashed{D}_{1,0}^{\pi\circ v_s}\right)\circ\\
&\quad R(\lambda,\slashed{D}_{1,0}^{\pi\circ v_s})\bigg)P^{u_0,v_s}\psi_0,
\end{split}
\end{split}\end{equation*}
where $\gamma$ is defined in \eqref{general curve} with $\Lambda=\frac12\Lambda(u_0)$. Therefore, for $p$ large enough, we get
\begin{equation*}
\begin{split}
&||P^{u_t,v_s}\bar\psi(u_t)(x)-\bar\psi(v_s)(x)||\leq C_1||P^{u_t,v_s}\bar\psi^{u_t}-\bar\psi^{v_s}||_{W^{1,p}(M)}\\
&\leq C_2\bigg\|\int_{\gamma}R(\lambda,P^{u_t,v_s}\slashed{D}^{\pi\circ u_t}(P^{u_t,v_s})^{-1})\bigg(P^{u_t,v_s}P^{u_0,u_t}\psi_0-P^{u_0,v_s}\psi_0\bigg)\bigg\|_{W^{1,p}(M)}\\
&\begin{split}
+C_2\bigg\|\int_{\gamma}&\bigg(R(\lambda,P^{u_t,v_s}\slashed{D}^{\pi\circ u_t}(P^{u_t,v_s})^{-1})\circ\left(P^{u_t,v_s}\slashed{D}^{\pi\circ u_t}(P^{u_t,v_s})^{-1}-\slashed{D}^{\pi\circ v_s}\right)\circ\\
&\quad R(\lambda,\slashed{D}^{\pi\circ v_s})\bigg)P^{u_0,v_s}\psi_0\bigg\|_{W^{1,p}(M)}
\end{split}\\
&\leq C_2\int_{\gamma}\bigg\|R(\lambda,P^{u_t,v_s}\slashed{D}^{\pi\circ u_t}(P^{u_t,v_s})^{-1})\bigg(P^{u_t,v_s}P^{u_0,u_t}\psi_0-P^{u_0,v_s}\psi_0\bigg)\bigg\|_{W^{1,p}(M)}\\
&\begin{split}
+C_2\int_{\gamma}&\bigg\|\bigg(R(\lambda,P^{u_t,v_s}\slashed{D}^{\pi\circ u_t}(P^{u_t,v_s})^{-1})\circ\left(P^{u_t,v_s}\slashed{D}^{\pi\circ u_t}(P^{u_t,v_s})^{-1}-\slashed{D}^{\pi\circ v_s}\right)\circ\\
&\quad R(\lambda,\slashed{D}^{\pi\circ v_s})\bigg)P^{u_0,v_s}\psi_0\bigg\|_{W^{1,p}(M)}
\end{split}\\
&\leq C_3\sup\limits_{{\rm Im}(\gamma)}\|R(\lambda,P^{u_t,v_s}\slashed{D}^{\pi\circ u_t}(P^{u_t,v_s})^{-1})\|_{L(L^p,W^{1,p})}\|P^{u_t,v_s}P^{u_0,u_t}\psi_0-P^{u_0,v_s}\psi_0\|_{L^p}\\
&\quad+C_3\sup\limits_{{\rm Im}(\gamma)}\|R(\lambda,P^{u_t,v_s}\slashed{D}^{\pi\circ u_t}(P^{u_t,v_s})^{-1})\|_{L(L^p,W^{1,p})}\sup\limits_{{\rm Im}(\gamma)}\|R(\lambda,\slashed{D}^{\pi\circ v_s})\|_{L(L^p,W^{1,p})}\\
&\quad\quad\|P^{u_t,v_s}\slashed{D}^{\pi\circ u_t}(P^{u_t,v_s})^{-1}-\slashed{D}^{\pi\circ v_s}\|_{L(W^{1,p},L^p)}\|P^{u_0,v_s}\psi_0\|_{L^p}.
\end{split}
\end{equation*}

Now, we estimate all the terms in the right-hand side of the inequality above. First, by Lemma \ref{resolvents} and Lemma \ref{W1p norm}, we know that all the resolvents above are uniformly bounded. Next, by Lemma \ref{Dirac along maps u}, we have
\begin{equation*}
\|P^{u_t,v_s}\slashed{D}^{\pi\circ u_t}(P^{u_t,v_s})^{-1}-\slashed{D}^{\pi\circ v_s}\|_{L(W^{1,p},L^p)}\leq C(R)\|u_t-v_s\|_{C^0(M,\mathbb{R}^q)}.
\end{equation*}
Finally, by Lemma \ref{PT}, we obtain
\begin{equation*}
\|P^{u_t,v_s}P^{u_0,u_t}\psi_0-P^{u_0,v_s}\psi_0\|_{L^p}\leq C(\epsilon,\psi_0)\|u_t-v_s\|_{C^0(M,\mathbb{R}^q)}.
\end{equation*}
Putting these together, we get \eqref{difference of psi bar after pt}.

Next, we want to show the following estimate which is very close to \eqref{psi lip}.
\begin{equation}\label{psi bar lip}
\|\bar{\psi}^A(u_t)(x)-\bar{\psi}^A(v_s)(x)\|\leq C(R,\epsilon,\psi_0)\|u_t-v_s\|_{C^0(M,\mathbb{R}^q)}.
\end{equation}

In fact, we have
\begin{equation*}
\begin{split}
&\|\bar{\psi}^A(u_t)(x)-\bar{\psi}^A(v_s)(x)\|\\
&\leq\|\bar{\psi}(u_t)(x)-\bar{\psi}(v_s)(x)\|_{\Sigma_xM\otimes\mathbb{R}^q}\\
&\leq\|P^{u_t,v_s}\bar{\psi}(u_t)(x)-\bar{\psi}(v_s)(x)\|_{\Sigma_xM\otimes\mathbb{R}^q}+\|P^{u_t,v_s}\bar{\psi}(u_t)(x)-\bar{\psi}(u_t)(x)\|_{\Sigma_xM\otimes\mathbb{R}^q}\\
&=\|P^{u_t,v_s}\bar{\psi}(u_t)(x)-\bar{\psi}(v_s)(x)\|_{\Sigma_xM\otimes T_{(\pi\circ v_s(x))}N}\\
&\quad+\|P^{u_t,v_s}\bar{\psi}(u_t)(x)-\bar{\psi}(u_t)(x)\|_{\Sigma_xM\otimes\mathbb{R}^q}\\
&\leq C(R,\epsilon,\psi_0)\|u_t-v_s\|_{C^0(M,\mathbb{R}^q)}+\|P^{u_t,v_s}\bar{\psi}(u_t)(x)-\bar{\psi}(u_t)(x)\|_{\Sigma_xM\otimes\mathbb{R}^q}.
\end{split}\end{equation*}
It remains to estimate the last term in the inequality above. To that end, let $\gamma(r):=\exp_{(\pi\circ u_t)(x)}(r\exp^{-1}_{(\pi\circ u_t)(x)}(\pi\circ v_s(x)))$, $r\in[0,1]$, be the unique shortest geodesic of $N$ from $(\pi\circ u_t)(x)$ to $(\pi\circ v_s)(x)$. Let $X\in T_{\gamma(0)}N$ be given and denote by $X(r)$ the unique parallel vector field along $\gamma$ with $X(0)=X$. Then we have
\begin{equation*}
P^{u_t,v_s}X-X=X(1)-X(0)=\int_0^1\frac{dX}{dr}\bigg|_{r=\xi}d\xi=\int_0^1II(\gamma'(r),X(r))dr.
\end{equation*}
Therefore,
\begin{equation*}
\|P^{u_t,v_s}X-X\|_{\mathbb{R}^q}\leq C_1\sup\limits_{r\in[0,1]}\|\gamma'(r)\|_{N}\sup\limits_{r\in[0,1]}\|X(r)\|_{N}=C_1\|\gamma'(0)\|_{N}\|X\|_{N}
\end{equation*}
where $II$ is the second fundamental form of $N$ in $\mathbb{R}^q$ and $C_1$ only depends on $N$. Using \eqref{control distance on N} and the Lipschitz continuity of $\pi$ we get
\begin{equation*}
\|\gamma'(0)\|_{N}\leq d^N((\pi\circ u_t)(x),(\pi\circ v_s)(x))\leq C_2\|u_t(x)-v_s(x)|\|_{\mathbb{R}^q}
\end{equation*}
and
\begin{equation*}
\|P^{u_t,v_s}X-X\|_{\mathbb{R}^q}\leq C_3\|u_t(x)-v_s(x)|\|_{\mathbb{R}^q}\|X\|_{N}.
\end{equation*}
This implies
\begin{equation*}
\|P^{u_t,v_s}\bar{\psi}(u_t)(x)-\bar{\psi}(u_t)(x)\|_{\Sigma_xM\otimes\mathbb{R}^q}\leq C(R,\epsilon,\psi_0)\|u_t(x)-v_s(x)|\|_{\mathbb{R}^q}.
\end{equation*}
Hence, \eqref{psi bar lip} holds.

Now, using \eqref{difference of psi bar after pt} and \eqref{psi bar lip}, we get
\begin{equation*}
\begin{split}
&\|\psi^A(u_t)(x)-\psi^A(v_s)(x)\|\\
&=\bigg\|\frac{\bar\psi^A(u_t)(x)}{\|\bar\psi(u_t)\|_{L^2}}-\frac{\bar\psi^A(u_t)(x)}{\|\bar\psi(v_s)\|_{L^2}}+\frac{\bar\psi^A(u_t)(x)}{\|\bar\psi(v_s)\|_{L^2}}-\frac{\bar\psi^A(v_s)(x)}{\|\bar\psi(v_s)\|_{L^2}}\bigg\|\\
&\leq\frac{\bar\psi^A(u_t)(x)}{\|\bar\psi(u_t)\|_{L^2}\|\bar\psi(v_s)\|_{L^2}}\bigg|\|\bar\psi(v_s)\|_{L^2}-\|\bar\psi(u_t)\|_{L^2}\bigg|\\
&\quad+\frac{1}{\|\bar\psi(v_s)\|_{L^2}}\|\bar\psi^A(u_t)(x)-\bar\psi^A(v_s)(x)\|\\
&=\frac{\bar\psi^A(u_t)(x)}{\|\bar\psi(u_t)\|_{L^2}\|\bar\psi(v_s)\|_{L^2}}\bigg|\|\bar\psi(v_s)\|_{L^2}-\|P^{u_t,v_s}\bar\psi(u_t)\|_{L^2}\bigg|\\
&\quad+\frac{1}{\|\bar\psi(v_s)\|_{L^2}}\|\bar\psi^A(u_t)(x)-\bar\psi^A(v_s)(x)\|\\
&\leq\frac{\bar\psi^A(u_t)(x)}{\|\bar\psi(u_t)\|_{L^2}\|\bar\psi(v_s)\|_{L^2}}\|P^{u_t,v_s}\bar\psi(u_t)-\bar\psi(v_s)\|_{L^2}\\
&\quad+\frac{1}{\|\bar\psi(v_s)\|_{L^2}}\|\bar\psi^A(u_t)(x)-\bar\psi^A(v_s)(x)\|\\
&\leq\bigg(\frac{\bar\psi^A(u_t)(x)}{\|\bar\psi(u_t)\|_{L^2}\|\bar\psi(v_s)\|_{L^2}}+\frac{1}{\|\bar\psi(v_s)\|_{L^2}}\bigg)C(R,\epsilon,\psi_0)\|u_t-v_s\|_{C^0(M,\mathbb{R}^q)}.
\end{split}
\end{equation*}
Then the inequality \eqref{psi lip} follows from Lemma \ref{projection to kernel} and \eqref{psi bar lip}. This completes the proof.

\end{proof}

\begin{comment}

\section{Conflict of interest}

 On behalf of all authors, the corresponding author states that there is no conflict of interest.

  \section{Data Availability Statement}
  Data sharing not applicable to this article as no datasets were generated or analysed during the current study.

\end{comment}

% ----------------------------------------------------------------

% ----------------------------------------------------------------

\bibliographystyle{amsplain}
\bibliography{reference3}

\providecommand{\bysame}{\leavevmode\hbox to3em{\hrulefill}\thinspace}
\providecommand{\MR}{\relax\ifhmode\unskip\space\fi MR }
% \MRhref is called by the amsart/book/proc definition of \MR.
\providecommand{\MRhref}[2]{%
  \href{http://www.ams.org/mathscinet-getitem?mr=#1}{#2}
}
\providecommand{\href}[2]{#2}
\begin{thebibliography}{10}

\bibitem{ammann2013dirac}
Bernd Ammann and Nicolas Ginoux, \emph{Dirac-harmonic maps from index theory},
  Calculus of Variations and Partial Differential Equations \textbf{47} (2013),
  no.~3, 739--762.

\bibitem{chen2006dirac}
Qun Chen, J{\"u}rgen Jost, Jiayu Li, and Guofang Wang, \emph{Dirac-harmonic
  maps}, Mathematische Zeitschrift \textbf{254} (2006), no.~2, 409--432.

\bibitem{chen2015dirac}
Qun Chen, J{\"u}rgen Jost, Linlin Sun, and Miaomiao Zhu, \emph{Dirac-harmonic
  maps between {R}iemann surfaces}, Asian Journal of Mathematics \textbf{23}
  (2019), no.~1, 107--126.

\bibitem{chen2017estimates}
\bysame, \emph{Estimates for solutions of {D}irac equations and an application
  to a geometric elliptic-parabolic problem}, Journal of the European
  Mathematical Society \textbf{21} (2019), no.~3, 665--707.

\bibitem{hermann2012dirac}
Andreas Hermann, \emph{Dirac eigenspinors for generic metrics}, arXiv preprint
  arXiv:1201.5771 (2012).

\bibitem{jost2009geometry}
J{\"u}rgen Jost, \emph{Geometry and physics}, Springer Science \& Business
  Media, 2009.

\bibitem{jost2017riemannian}
J\"{u}rgen Jost, \emph{Riemannian geometry and geometric analysis}, seventh
  ed., Universitext, Springer, 2017.

\bibitem{jost2018geometric}
J{\"u}rgen Jost, Lei Liu, and Miaomiao Zhu, \emph{A mixed elliptic-parabolic
  boundary value problem coupling a harmonic-like map with a nonlinear spinor},
  Journal für die reine und angewandte Mathematik (Crelles Journal)
  \textbf{2022} (2022), no.~785, 81--116.

\bibitem{jost2019alpha}
J{\"u}rgen Jost and Jingyong Zhu, \emph{$\alpha$-{D}irac-harmonic maps from
  closed surfaces}, Calculus of variations and partial differential equations
  \textbf{60} (2021), no.~3.

\bibitem{jost2019short}
J\"{u}rgen Jost and Jingyong Zhu, \emph{Short-time existence of the
  $\alpha$-{D}irac-harmonic map flow and applications}, Communications in
  Partial Differential Equations \textbf{46} (2021), no.~3, 442--469.

\bibitem{lawson1989spin}
H~Blaine Lawson and Marie~Louise Michelsohn, \emph{Spin geometry}, Princeton
  Univ. Press, 1989.

\bibitem{lieberman1996second}
Gary~M Lieberman, \emph{Second order parabolic differential equations}, World
  scientific, 1996.

\bibitem{nash1956imbedding}
John Nash, \emph{The imbedding problem for {R}iemannian manifolds}, Annals of
  Mathematics (1956), 20--63.

\bibitem{sacks1981existence}
Jonathan Sacks and Karen Uhlenbeck, \emph{The existence of minimal immersions
  of {$2$}-spheres}, Annals of Mathematics (1981), 1--24.

\bibitem{schlag1996schauder}
Wilhem Schlag, \emph{Schauder and {$L^p$} estimates for parabolic systems via
  {C}ampanato spaces}, Communications in Partial Differential Equations
  \textbf{21} (1996), no.~7-8, 1141--1175.

\bibitem{sun2018note}
Linlin Sun, \emph{A note on the uncoupled {D}irac-harmonic maps from
  {K}\"{a}hler spin manifolds to {K}\"{a}hler manifolds}, Manuscripta
  Mathematica \textbf{155} (2018), no.~1, 197--208.

\bibitem{yang2009structure}
Ling Yang, \emph{A structure theorem of {D}irac-harmonic maps between spheres},
  Calculus of Variations and Partial Differential Equations \textbf{35} (2009),
  no.~4, 409--420.

\end{thebibliography}

\end{document}